\documentclass{article}

\usepackage[english]{babel}
 
\usepackage{amsmath, latexsym, amsfonts, amssymb, amsthm, amscd}
\usepackage{mathrsfs}
\usepackage{graphics,epsf,psfrag}
\usepackage[lofdepth,lotdepth]{subfig}
\usepackage{graphicx}
\usepackage{float}
\usepackage{url}
\usepackage{hyperref}
\usepackage{enumerate}
\usepackage[yyyymmdd,hhmmss]{datetime}

\usepackage{caption}
\captionsetup[subfigure]{margin=0pt, parskip=0pt, hangindent=0pt, indention=0pt, 
labelformat=parens, labelfont=rm}
\newcommand\fnote[1]{\captionsetup{font=footnotesize}\caption*{#1}}

\usepackage{stmaryrd} 
\usepackage{enumitem}
\usepackage{tikz}

\usepackage{xcolor}
\definecolor{lightgray}{gray}{0.85}
\definecolor{midgray}{gray}{0.4}
 \usepackage{soul}
 
\usepackage{fancybox}

\setlength{\oddsidemargin}{5mm}
\setlength{\evensidemargin}{5mm}
\setlength{\textwidth}{150mm}
\setlength{\headheight}{0mm}
\setlength{\headsep}{12mm}
\setlength{\topmargin}{0mm}
\setlength{\textheight}{220mm}
\setcounter{secnumdepth}{3}

\numberwithin{equation}{section}
\theoremstyle{remark} 
 
\theoremstyle{plain} 
\newtheorem{thm}{Theorem}[section] 
\newtheorem{cor}[thm]{Corollary}
\newtheorem{prop}[thm]{Proposition}
\newtheorem{lem}[thm]{Lemma}
\theoremstyle{definition} 
\newtheorem{deff}[thm]{Definition}

\newtheorem{remark}[thm]{Remark}

\title
{Stability of wandering bumps  for  Hawkes processes interacting on the circle}
\author{Zo\'e \textsc{Agathe-Nerine}\vspace{0.5cm}\\
{\small Universit\'e Paris Cit\'e, CNRS, MAP5, F-75006 Paris, France}\\
{\small zoe.agathe-nerine@u-paris.fr}}
\date{\today}


\usepackage[raggedright]{titlesec}

\begin{document}
\maketitle

\begin{abstract}
We consider a population of Hawkes processes modeling the activity of $N$ interacting neurons. The neurons are regularly positioned on the circle $[-\pi, \pi]$, and the connectivity between neurons is given by a cosine kernel. The firing rate function is a sigmoid. The large population limit admits a locally stable manifold of stationary solutions. The main result of the paper concerns the long-time proximity of the synaptic voltage of the population to this manifold in polynomial times in $N$. We show in particular that the phase of the voltage along this manifold converges towards a Brownian motion on a time scale of order $N$.
\end{abstract}

\noindent {\sc {\bf Keywords.}} Multivariate nonlinear Hawkes processes, Mean-field systems,  Neural Field Equation, Spatially extended system, Stationary bumps.\\
\noindent {\sc {\bf AMS Classification.}} 60F15, 60G55, 60K35, 44A35, 92B20.

\section{Introduction}
\subsection{Hawkes Processes and Neural Field Equation} 

In the present paper we study the large time behavior of a population of interacting and spiking neurons indexed by $i=1,\cdots,N$, $N\geq 1$, as the size of the population $N$ tends to infinity.   We model the activity of a neuron by a point process where each point represents the time of a spike: for $i=1,\cdots,N$, $Z_{N,i}(t)$ counts the number of spikes during the time interval $[0,t]$ of the $i$th neuron of the population. Denoting $\lambda_{N,i}(t)$ as the conditional intensity of $Z_{N,i}$ at time $t$, that is
$$\mathbf{P}\left( Z_{N,i} \text{ jumps between} (t,t+dt) \vert \mathcal{F}_t\right)= \lambda_{N,i}(t)dt,$$
where $\mathcal{F}_t:=\sigma\left( Z_{N,i}(s), s\leq t, 1\leq i\leq N\right)$, we want to account for the dependence of the activity of a neuron on the past of the whole population : the spike of one neuron can trigger other spikes. \textit{Hawkes processes} are then a natural choice to emphasize this interdependency and we take here
\begin{equation}\label{eq:def_lambdaN_specific}
\lambda_{N,i}(t)=f_{\kappa,\varrho}\left( \rho(x_i)e^{-t}+\dfrac{2\pi}{N}\sum_{j=1}^N \cos(x_i-x_j) \int_0^{t-} e^{-(t-s)} dZ_{N,j}(s)\right), \quad i=1,\ldots,N.
\end{equation}
The neurons are located on the circle $S=(-\pi,\pi]$ with positions $(x_i)_{1\leq i \leq N}$ regularly distributed, that is $x_i=\dfrac{\pi}{N}\left( 2i - N\right)$. We subdivide $S$ into $N$ intervals of length $2\pi/ N$ denoted by
\begin{equation}\label{eq:def_Bi}
B_{N,i}=\left(x_{i-1},x_i\right] \quad \text{ for } 1\leq i \leq N, \text{with } x_0:=-\pi.
\end{equation}
The function $f_{\kappa,\varrho} ~:~  \mathbb{R}  \longrightarrow \mathbb{R}_+$ models the synaptic integration of neuron $i$ with respect to the input of the other neurons $j$ in the population, modulated by the spatial kernel $\cos(x_i-x_j)$. It is chosen as a sigmoid with parameters $(\kappa,\varrho)$, $\kappa>0$, $\varrho\in (0,1)$, that is 
\begin{equation}\label{eq:def_sigmoid}
f_{\kappa,\varrho}(u):=\left(1+e^{-(u-\varrho)/\kappa}\right)^{-1}.
\end{equation}
The function $\rho ~:~ S  \longrightarrow \mathbb{R}$ represents the initial inhomogeneous voltage of the population and leaks at rate 1. The exponential term $e^{-(t-s)}$ in the integral in \eqref{eq:def_lambdaN_specific} quantifies how a jump lying back $t-s$ time units in the past affects the present (at time $t$) intensity: each neuron tends to forget progressively its past. The main object of interest of the paper is the synaptic voltage
\begin{equation}\label{eq:def_UiN}
U_{N,i}(t)= \rho(x_i)e^{-t}+\dfrac{2\pi}{N}\sum_{j=1}^N \cos(x_i-x_j) \int_0^{t} e^{-(t-s)} dZ_{N,j}(s)=:\rho(x_i)e^{-t} + X_{N,i}(t),
\end{equation} 
(i.e. $\lambda_{N,i}(t)=f_{\kappa,\varrho}\left(U_{N,i}(t-)\right)$) and more precisely the random profile defined for all $x\in S$ by:
\begin{equation}\label{eq:def_UN}
U_N(t)(x):=\sum_{i=1}^N U_{N,i}(t) \mathbf{1}_{x\in B_{N,i}}.
\end{equation}

The specific form of \eqref{eq:def_lambdaN_specific} originates from the so-called ring model introduced by \cite{Shriki2003}, modelling the activity of neurons in the visual cortex on a mesoscopic scale. Here each position $x\in S$ represents a prefered orientation for each neuron, see the biological works of \cite{Georgopoulos1982,Bosking1997} and the mathematical works of \cite{veltzFaugeras2010,MacLaurinBressloff2020} amongst others. We are looking here at the microscopic counterpart of this model. It means that neurons that prefer close orientation tend to excitate each others, whereas neurons with opposite orientation inhibit each others. Making $\kappa\to 0$ in \eqref{eq:def_sigmoid}, we see that $f_{\kappa, \varrho}$ converges towards $H_\varrho$ the Heaviside function 
\begin{equation}
\label{eq:def_heaviside}
H_\varrho(u)= \mathbf{1}_{u\geq \varrho}.
\end{equation}
Hence for $\kappa$ small, a neuron can spike only when it has a high potential with rate approximately 1, and with rate approximately 0 otherwise. \\

This model \eqref{eq:def_lambdaN_specific} is a specific case of a larger class of mean-field  Hawkes processes for which one can write the intensity in the form
\begin{equation}\label{eq:def_lambda_generic}
\lambda_{N,i}(t)=\mu_t(x_i)+f\left( v_{t}(x_i)+\dfrac{1}{N}\sum_{j=1}^N w_{ij}^{(N)} \int_0^{t-}h(t-s) dZ_{N,j}(s)\right),  \quad i=1,\ldots,N.
\end{equation}
The current model \eqref{eq:def_lambdaN_specific} corresponds to the choice $h(t)=e^{-t}$ and $w_{ij}^{(N)}=2\pi\cos(x_i-x_j)$. In \eqref{eq:def_lambda_generic}, the neurons are placed in a spatial domain $I$ endowed with $\nu$ a probability measure that describes the macroscopic distribution of the positions. The parameter function $\mu_t~:~ I \longrightarrow \mathbb{R}_+$ represents a spontaneous activity of the neuron at time $t$, $v_t~:~ I \longrightarrow \mathbb{R}$ a past activity, $h$ is the memory kernel of the system, $f~:~ \mathbb{R} \longrightarrow \mathbb{R}^+$ and $ w_{ij}^{(N)} $ represents the interaction between neurons $i$ and $j$. For a suitable class of connectivity sequence $(w_{ij}^{(N)})$ that can be approximated by some macroscopic interaction kernel $w(x,y)$ as $N\to\infty$ (see \cite{CHEVALLIER20191,agathenerine2021multivariate} for precise statements), a usual propagation of chaos result as $N\to\infty$ (see \cite[Theorem 8]{delattre2016}, \cite[Theorem 1]{CHEVALLIER20191},  \cite[Theorem 3.10]{agathenerine2021multivariate}) may be stated as follows: for fixed $T>0$, there exists some $C(T)>0$ such that
\begin{equation}\label{eq:chaos_generic}
\sup_{1\leq i \leq N} \mathbf{E}\left(\sup_{s\in [0,T]} \left\vert Z_{N,i}(s) - \overline{Z}_{i}(s) \right\vert \right) \leq \dfrac{C(T)}{\sqrt{N}},
\end{equation}
where the limiting process $\left(\overline{Z}_{i}, i=1,\ldots, N\right)$ consists of independent copies of inhomogeneous Poisson process suitably coupled to $Z_{N,i}$ with intensity $(\lambda_t(x_i))_{t\geq 0}$ solving
\begin{equation}\label{eq:def_lambda_lim_generic}
\lambda_t(x)=\mu_t(x)+f\left( v_t(x)+\int_I w(x,y) \int_0^{t} h(t-s) \lambda_s(y)ds\nu(dy)\right)
\end{equation} 
(see the above references for details on this coupling). Moreover, for the specific choice $h(t)=e^{-t}$, denoting the macroscopic potential of a neuron (the synaptic current) with position $x$ at time $t$ by 
\begin{equation}\label{eq:def_utx}
u_t(x):=v_t(x)+\int_I w(x,y) \int_0^{t} h(t-s) \lambda_s(y)ds\nu(dy),
\end{equation}
an easy computation (see \cite{CHEVALLIER20191}) gives that, when $v_t(x)=\rho(x)e^{-t}$, $u$ solves the \emph{Neural Field Equation} (NFE) 
\begin{equation}\label{eq:NFE_gen}
\dfrac{\partial u_t(x)}{\partial t}=- u_t(x)+\int_I w(x,y)f(u_t(y))\nu(dy), \quad t\geq 0,
\end{equation}
with initial condition $u_0=\rho$. The NFE that first appears in \cite{Wilson1972} has been extensively studied in the literature, mostly from a phenomenological perspective \cite{Amari1977}, and is an important example of macroscopic neural dynamics with non-local interactions (we refer to \cite{bressloff_waves_2014} for an extensive review on the subject). Let us mention here an important point: whereas the analysis of \cite{CHEVALLIER20191} requires the measure $\nu$ in \eqref{eq:NFE_gen} to be a probability measure on $I$, the historical version of the NFE was originally studied when $\nu(dy)=dy$ is the Lebesgue measure on $\mathbb{R}$. In this last case, thanks to its translation invariance of the Lebesgue measure, one can show the existence of travelling waves solutions to \eqref{eq:NFE_gen}, see \cite{ermentrout_mcleod93,lang_stannat2016} for details. The same analysis when $\nu(dy)=dy$ is remplaced by a probability measure fails, as translation invariance of \eqref{eq:NFE_gen} is then broken. In this respect, the present choice of $I=S$ and $\nu(dy)= \frac{\mathbf{1}_{[-\pi,\pi)}}{2\pi}dy$ combines the two previous advantages: $\nu$ is a probability measure (hence the previous analysis when $N\to\infty$ applies) and translation invariance is preserved in the present periodic case. It can be shown (\cite{kilpatrick2013}) that \eqref{eq:NFE_gen} exhibits localized patterns (\emph{wandering bumps}) which are stationary pulse solutions.\\

We are interested in this paper in the long time behavior of the microscopic system  \eqref{eq:def_lambdaN_specific} and its proximity to these wandering bumps. Before focusing on the microscopic scale, we say a few words on the behavior of the macroscopic system \eqref{eq:def_lambda_lim_generic}/\eqref{eq:def_utx}. In the pure mean-field case (when $w_{ij}^{(N)}=1$ for all $i$, $j$), the spatial dependency is no longer relevant and \eqref{eq:def_lambda_lim_generic} reduces to the scalar nonlinear convolution equation $\lambda_t=\mu_t+f(v_t+\int_0^t h(t-s)\lambda_s ds)$. An easy instance concerns the so-called linear case where $f(x)=x$, $\mu_t=\mu$ and $\nu_t=0$: in this situation the behavior of $\lambda_t$ as $t\to\infty$ is well known. There is a phase transition (\cite[Theorems 10,11]{delattre2016}) depending on the memory kernel $h$: when $\Vert  h \Vert_1=\int_0^\infty h(t) dt<1$ (the \emph{subcritical case}), $\lambda_t\xrightarrow[t\to\infty]{}\dfrac{\mu}{1-\Vert h \Vert_1}$, whereas when $\Vert h \Vert_1>1$ (the \emph{supercritical case}), $\lambda_t\xrightarrow[t\to\infty]{}\infty$. This phase transition was extended to the inhomogeneous case in \cite{agathenerine2021multivariate} (and more especially where the interaction is made through the realisation of weighted random graphs), and the existence of such a phase transition now reads in terms of $\Vert h \Vert_1 r_\infty<1$ (then $\lambda_t(x)\to\ell(x)$ the unique solution of $\ell(x)=\mu(x)+ \int_I w(x,y)\Vert h \Vert_1 \ell(y)\nu(dy)$) and $\Vert h \Vert_1 r_\infty>1$ (then $\Vert \lambda_t\Vert_2\to\infty$), where $r_\infty$ is the spectral radius of the interaction operator $T_W g(x)\mapsto \int_I w(x, y)g(y)\nu(dy)$. In the fully inhomogeneous case and nonlinear case ($f$ no longer equal to $Id$), a sufficient condition for convergence of $\lambda_t$ is given in \cite{agathenerine_longtime_arxiv}: whenever
\begin{equation}\label{eq:def_subcritical}
\Vert f' \Vert_\infty \Vert h \Vert_1 r_\infty <1,
\end{equation}
$\lambda_t$ converges to $\ell$ as $t\to\infty$, $\ell$ being the unique solution to 
\begin{equation}
\label{eq:def_ell_previous}
\ell=\mu+f\left(\Vert h \Vert_1 T_W \ell\right).
\end{equation}
Note that the present model \eqref{eq:def_lambdaN_specific} obviously does not satisfy \eqref{eq:def_subcritical}, as $\Vert f'\Vert_\infty$ is very large (recall \eqref{eq:def_sigmoid}: $f$ is a sigmoid close to the Heaviside function).  Understanding the longtime behavior of $\lambda_t$ when \eqref{eq:def_subcritical} does not hold may be a difficult task for general $h$. However the present model is sufficiently simple to be analyzed rigorously: as it was originally noted by \cite{kilpatrick2013}, the stationary points of \eqref{eq:def_lambda_lim_generic} when $w$ is a cosine can be found by solving an appropriate fixed point relation (see \eqref{eq:A_self} below) and by invariance by translation, each fixed-point gives rise to a circle of stationary solutions to \eqref{eq:def_lambda_lim_generic}. One part of the proof will be to show the local stability of these circles (extending the results of \cite{kilpatrick2013} when $f$ is the Heaviside function).\\

The main concern of the paper is to analyse the microscopic system \eqref{eq:def_UN} on a long time scale. An issue common to all mean-field models (and their perturbations) is that there is, in general, no possibility to interchange the limits $N\to \infty$ and $t\to\infty$. Specifying to Hawkes processes, the constant $C(T)$ in \eqref{eq:chaos_generic} is of the form $\exp(CT)$, such that \eqref{eq:chaos_generic} remains only relevant up to $T \sim c \log N$ with $c$ sufficiently small. In the linear subcritical case, $C(T)$ is linear ($C(T)=CT$) so that the mean-field approximation remains relevant up to $T = o\left( \sqrt{N}\right)$ (\cite{delattre2016}). In a previous work \cite{agathenerine_longtime_arxiv}, we showed that, in the subcritical regime defined by \eqref{eq:def_subcritical} with $h(t)=e^{-t}$, the macroscopic intensity \eqref{eq:def_lambda_lim_generic} converges to $\ell$ defined by \eqref{eq:def_ell_previous} and the  microscopic intensity \eqref{eq:def_lambda_generic} remains close to this limit up to polynomial times in $N$. Here, the main difference is that \eqref{eq:def_utx} admits a manifold of stable stationary solutions parameterized by $S$, instead of a unique one. We show here that, with some initial condition close to this manifold, our microscopic process \eqref{eq:def_UN} stays close to the manifold up to time horizons that are polynomial in $N$, and moreover the dynamics of the microscopic current follows a Brownian motion on the manifold. 

\paragraph{Organization of the paper} The paper is organized as follows: after introducing some notations, we start in Section \ref{S:model} by introducing the precise mathematical set-up. In Section \ref{S:R}, we present the main results of our paper. Section \ref{S:main_result} is divided into three parts: in the first part \ref{S:stationary_result}, we present the deterministic dynamics of \eqref{eq:NFE_specific} and the manifold of stationary solutions $\mathcal{U}$ defined in \eqref{eq:def_U_circle}. In the second part we introduce two ways of defining some phase reduction along $\mathcal{U}$, the variational phase (Proposition \ref{prop:phase_proj}) and isochronal phase (Proposition \ref{prop:ex_isochron}). In the last part, Theorem \ref{thm:UN_close_U} ensures that if the system is close to $\mathcal{U}$, it stays so for a long time, and  with Theorem \ref{thm:theta_fluc}, we analyze the dynamics of the isochronal phase of $U_N$ along $\mathcal{U}$. Such dynamics are represented in the simulations of Figure \ref{fig:wandering1}. In Section \ref{S:litterature}, we explain how our paper is linked to the present litterature on the subject. In Section \ref{S:strategy}, we sketch the strategy of proof we follow. Section \ref{S:proof_stat} collects the proofs of the results of Sections \ref{S:stationary_result} and \ref{S:projection}, Section \ref{S:long_time} concerns the proof of the proximity between $U_N$ and $\mathcal{U}$ seen in Theorem \ref{thm:UN_close_U} and Section \ref{S:fluct} is devoted to prove the diffusive behavior of $U_N$ along $\mathcal{U}$ seen in Theorem \ref{thm:theta_fluc}. Some technical estimates and computations are gathered in the appendix.

\paragraph{Acknowledgments.}
This is a part of my PhD thesis. I would like to warmly thank my PhD
supervisors Eric \textsc{Luçon} and Ellen \textsc{Saada} for introducing this subject, for their useful advices and for their encouragement and guidance. This research has been conducted within the FP2M federation (CNRS FR 2036), and is supported by ANR-19-CE40-0024 (CHAllenges in MAthematical NEuroscience) and ANR-19-CE40-0023 (Project PERISTOCH). I would also like to thank Christophe \textsc{Poquet} for pointing out a mistake in a previous version of the paper.

\subsection{Notations and definition}
\subsubsection{Notations}

We denote by $C_{\text{parameters}}$ a constant $C>0$ which only depends on the parameters inside the lower index. These constants can change from line to line or inside a same equation, and when it is not relevant, we just write $C$. For any $d\geq 1$, we denote by $\vert x\vert$ and $x \cdot y$ the Euclidean norm and scalar product of $x,y\in \mathbb{R}^d$. For $(E,\mathcal{A},\mu)$ a measured space, for a function $g$ in $L^p(E,\mu)$ with $p\geq 1$, we write $\Vert g \Vert_{E,\mu,p}:=\left( \int_E \vert g \vert^p d\mu \right)^\frac{1}{p}$. When $p=2$, we denote by $\langle \cdot,\cdot \rangle$ the Hermitian  scalar product in $L^2(E,\mu)$. Without ambiguity, we may omit the subscript $(E,\mu)$ or $\mu$. For a  real-valued bounded function $g$ on a space $E$,  we write $\Vert g \Vert _\infty := \Vert g \Vert _{E,\infty}=\sup_{x\in E} \vert g(x) \vert$. 

For $(E,d)$ a metric space, we denote by $ \Vert g \Vert_{\text{lip}} = \sup_{x\neq y} \vert g(x) - g(y) \vert / d(x,y)$ the Lipschitz seminorm of a real-valued function $g$ on $E$. We denote by $\mathcal{C}(E,\mathbb{R})$ the space of continuous functions from $E$ to $\mathbb{R}$, and $\mathcal{C}_b(E,\mathbb{R})$ the space of continuous bounded ones. For any $T>0$, we denote by $\mathbb{D}\left([0,T],E\right)$ the space of c\`adl\`ag (right continuous with left limits) functions defined on $[0,T]$ and taking values in $E$. For any integer $N\geq 1$, we denote by $\llbracket 1, N \rrbracket$ the set $\left\{1,\cdots,N\right\}$. 

For any $h,k,l \in E$, we denote by $Dg(h)[k]\in S$ the derivative of $g:E\to F$ at $h$ in the direction $k$, and similarly for second derivatives $D^2g(h)[k,l]$.

\subsubsection{Definition of the model}\label{S:model}

We define now formally our process of interest. Definition \ref{def:H} follows a standard representation of point processes as thinning of independent Poisson measures, see \cite{Ogata1988,delattre2016}.

\begin{deff}\label{def:H} Let $\left(\pi_i(ds,dz)\right)_{1\leq i \leq N}$ be a sequence of i.i.d. Poisson random measures on $\mathbb{R}_+\times \mathbb{R}_+$ with intensity measure $dsdz$. The multivariate counting process  $\left(Z_{N,1}\left(t\right),...,Z_{N,N}\left(t\right)\right)_{t\geq 0}$ defined by, for all $t\geq 0$ and $i \in \llbracket 1, N \rrbracket$:
\begin{equation}\label{eq:def_ZiN}
Z_{N,i}(t) = \int_0^t \int_0^\infty \mathbf{1}_{\{z\leq \lambda_{N,i}(s)\}} \pi_i(ds,dz),
\end{equation}
where $ \lambda_{N,i}$ is defined in \eqref{eq:def_lambdaN_specific} is called \emph{a multivariate Hawkes process} with set of parameters
$\left(N,\kappa,\varrho,\rho\right)$.
\end{deff}

It has been showed in several works (see e.g. \cite{agathenerine2021multivariate,delattre2016} amongst others) that the process defined in \eqref{eq:def_ZiN} is well posed in the following sense.
\begin{prop}
\label{prop:exis_H_N} For a fixed realisation of the family $\left(\pi_i\right)_{1\leq i \leq N}$,  there exists a pathwise unique multivariate Hawkes process (in the sense of Definition \ref{def:H}) such that for any $T<\infty$, $$\sup_{t\in [0,T]} \sup_{1\leq i \leq N} \mathbf{E}[Z_{N,i}(t)] <\infty.$$
\end{prop}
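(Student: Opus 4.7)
The plan is to construct the process by a pathwise thinning argument à la Ogata \cite{Ogata1988}, exploiting the crucial fact that the sigmoid $f_{\kappa,\varrho}$ takes values in $(0,1)$, and then to derive uniqueness and the moment bound essentially for free. The global boundedness of $f_{\kappa,\varrho}$ is what makes this proposition much softer than the analogous statement for unbounded rate functions, so no Picard iteration or Grönwall truncation should be needed.

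First, I would note that for any candidate realisation of $(Z_{N,j})_j$, the intensity $\lambda_{N,i}(t)$ defined by \eqref{eq:def_lambdaN_specific} is bounded by $1$, independently of $N$, $i$, $t$ and the history. Therefore only the atoms of $\pi_i$ in the strip $\mathbb{R}_+ \times [0,1]$ can yield candidate jumps; their superposition over $i \in \llbracket 1, N \rrbracket$ is a Poisson process of rate $N$, whose ordered atoms $(T_k)_{k \geq 1}$ almost surely satisfy $T_k \to \infty$.

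Second, I would construct $(Z_{N,i})_{1 \leq i \leq N}$ inductively on the intervals $[T_k, T_{k+1})$. On $[0, T_1)$ all processes are set to zero, so each $U_{N,i}$ and each $\lambda_{N,i}$ is an explicit function of $\rho$ on this interval. At time $T_1$, corresponding to an atom $(T_1, z)$ of some $\pi_{i_0}$ below level $1$, one accepts the jump of $Z_{N,i_0}$ if and only if $z \leq \lambda_{N,i_0}(T_1-)$; one then updates all the $U_{N,i}$ and $\lambda_{N,i}$, and iterates at $T_2, T_3, \ldots$. Since $(T_k)$ has no finite accumulation point, this yields a well-defined c\`adl\`ag construction on all of $\mathbb{R}_+$.

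Third, the moment bound comes from the domination $Z_{N,i}(T) \leq \pi_i([0,T] \times [0,1])$ built into the construction, which gives $\mathbf{E}[Z_{N,i}(T)] \leq T$ uniformly in $i$ and hence the claimed finiteness. Pathwise uniqueness follows because any solution of \eqref{eq:def_ZiN} must jump only at atoms of $\pi_i$ below $\lambda_{N,i} \leq 1$, so its jump times are a subset of $(T_k)$; the acceptance or rejection at each $T_k$ is then entirely forced by the comparison between the $z$-coordinate of that atom and $\lambda_{N,i}(T_k-)$, which depends only on the already-constructed past. The main (very minor) obstacle is verifying measurability of the construction with respect to the filtration generated by the $\pi_i$, which is routine once the inductive scheme on $(T_k)$ has been set up.
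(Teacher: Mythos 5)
Your argument is correct. Note, however, that the paper does not prove this proposition on its own: it simply invokes \cite[Proposition 2.5]{agathenerine2021multivariate}, a well-posedness result for general multivariate nonlinear Hawkes processes with Lipschitz rate functions, whose proof (in the spirit of \cite{delattre2016}) proceeds via a Picard iteration and a Gr\"onwall estimate on the expected number of jumps, and does not rely on boundedness of the rate. Your route is therefore genuinely different and more elementary: since $f_{\kappa,\varrho}$ in \eqref{eq:def_sigmoid} takes values in $(0,1)$, every admissible intensity in \eqref{eq:def_lambdaN_specific} is bounded by $1$, the relevant atoms of the measures $\pi_i$ form an almost surely locally finite set, and the solution of \eqref{eq:def_ZiN} can be built --- and is forced --- atom by atom in the manner of \cite{Ogata1988}; the bound $\mathbf{E}[Z_{N,i}(t)]\leq t$ is then immediate by domination by $\pi_i([0,t]\times[0,1])$, which gives the stated supremum. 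What you lose is generality (the construction would not survive an unbounded $f$, for which the Gr\"onwall route is needed); what you gain is a self-contained, fully pathwise argument in which uniqueness is transparent. Two minor points should still be made explicit: almost surely no two atoms of the superposition $\sum_i \pi_i$ share a time coordinate, so the acceptance test at each $T_k$ is unambiguous; and the resulting process is adapted to the filtration generated by the $(\pi_i)$, which, as you note, is routine given the inductive scheme.
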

Proposition \ref{prop:exis_H_N} can be found in \cite[Propositions 2.5]{agathenerine2021multivariate}. In our framework, the macroscopic intensity \eqref{eq:def_lambda_lim_generic} population limits is
\begin{equation}\label{eq:def_lambda_lim_specific}
\lambda_t(x)=f_{\kappa,\varrho}\left( \rho(x)e^{- t}+\int_S \cos(x-y) \int_0^{t} e^{-(t-s)} \lambda_s(y)dsdy\right),
\end{equation}
and the neural field equation \eqref{eq:NFE_gen} becomes
\begin{equation}\label{eq:NFE_specific}
\dfrac{\partial u_t(x)}{\partial t}=- u_t(x)+\int_S \cos(x-y)f_{\kappa,\varrho}(u_t(y))dy.
\end{equation}

\begin{prop}
\label{prop:exis_lambda_barre}
Let $T>0$. There exists a unique solution  $(u_t)_{t\in [0,T]}$ in $\mathcal{C}_b(S, \mathbb{R})$ to \eqref{eq:NFE_specific} with initial condition $u_0=\rho$.
\end{prop}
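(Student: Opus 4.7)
The plan is to rewrite \eqref{eq:NFE_specific} in mild (Duhamel) form and apply a Banach fixed-point argument in $\mathcal{C}\bigl([0,T],\mathcal{C}_b(S,\mathbb{R})\bigr)$. Setting
\[
F(u)(x):=\int_S \cos(x-y)\, f_{\kappa,\varrho}(u(y))\,dy,
\]
a function $u\in \mathcal{C}\bigl([0,T],\mathcal{C}_b(S,\mathbb{R})\bigr)$ solves \eqref{eq:NFE_specific} with $u_0=\rho$ if and only if it is a fixed point of
\[
\Phi(u)_t(x)\;:=\;e^{-t}\rho(x)+\int_0^t e^{-(t-s)}F(u_s)(x)\,ds.
\]
I would first verify that $\Phi$ maps the target space into itself: continuity of $\Phi(u)_t$ in $x\in S$ follows from dominated convergence (since $\cos(x-\cdot)$ is continuous and $f_{\kappa,\varrho}\leq 1$), and continuity in $t$ is immediate from the explicit time-dependence of $\Phi(u)_t$.

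The core of the argument is the global Lipschitz regularity of $F$ on $\mathcal{C}_b(S,\mathbb{R})$. Since $f_{\kappa,\varrho}$ is a sigmoid, an elementary computation yields $\|f_{\kappa,\varrho}'\|_\infty=\frac{1}{4\kappa}$, hence
\[
\|F(u)-F(v)\|_\infty\;\leq\;\frac{1}{4\kappa}\int_S |\cos(x-y)|\,dy\;\|u-v\|_\infty\;\leq\;\frac{\pi}{2\kappa}\|u-v\|_\infty.
\]
Denoting $L:=\pi/(2\kappa)$ and equipping $\mathcal{C}\bigl([0,T],\mathcal{C}_b(S,\mathbb{R})\bigr)$ with the weighted norm $\|u\|_\lambda := \sup_{t\in[0,T]} e^{-\lambda t}\|u_t\|_\infty$, I would then estimate
\[
e^{-\lambda t}\|\Phi(u)_t-\Phi(v)_t\|_\infty \;\leq\; L\int_0^t e^{-(t-s)}e^{-\lambda(t-s)}\,ds\;\|u-v\|_\lambda \;\leq\; \frac{L}{1+\lambda}\|u-v\|_\lambda.
\]
Choosing $\lambda>L-1$ makes $\Phi$ a strict contraction, and the Banach fixed-point theorem delivers a unique fixed point in $\mathcal{C}\bigl([0,T],\mathcal{C}_b(S,\mathbb{R})\bigr)$, valid for any $T>0$ since the Lipschitz constant of $F$ does not depend on $T$.

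For the uniqueness statement on its own (in case one prefers to separate it), I would bound directly $|u_t(x)-v_t(x)|\leq L\int_0^t e^{-(t-s)}\|u_s-v_s\|_\infty\,ds$ for two mild solutions sharing the initial datum $\rho$, and conclude by Grönwall's lemma. There is no real obstacle here: the model is semilinear with a globally bounded and globally Lipschitz nonlinearity, which places us squarely within standard Picard--Lindelöf theory for ODEs in Banach spaces; the only mildly nontrivial point is to notice that mild solutions automatically belong to $\mathcal{C}_b$ in $x$ thanks to the smoothing by integration against $\cos(x-\cdot)$, and that the argument is global in time because $f_{\kappa,\varrho}\in[0,1]$ keeps $F$ bounded independently of $u$.
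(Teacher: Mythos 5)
Your argument is correct and is essentially the standard route that the paper itself does not spell out but delegates to \cite{agathenerine2021multivariate} with the remark that it ``follows from a standard Gr\"onwall estimate'': writing \eqref{eq:NFE_specific} in the Duhamel form \eqref{eq:def_flow} and running Picard--Lindel\"of in $\mathcal{C}([0,T],\mathcal{C}_b(S,\mathbb{R}))$, using that $f_{\kappa,\varrho}$ is bounded with $\Vert f_{\kappa,\varrho}'\Vert_\infty=\tfrac{1}{4\kappa}$ so that the nonlinearity is globally Lipschitz. Your weighted-norm contraction and the Gr\"onwall uniqueness bound are two equivalent implementations of the same idea, and all the quantitative steps (the Lipschitz constant, the factor $\int_S|\cos(x-y)|\,dy\le 2\pi$, the choice $\lambda>L-1$) check out.
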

Proposition \ref{prop:exis_lambda_barre} can be found in \cite[Propositions 2.7]{agathenerine2021multivariate}, and follows from a standard Gr\"{o}nwall estimate. We can then define the flow of \eqref{eq:NFE_specific} by $(t,g)\mapsto \psi_t(g)$, that is the solution at time $t$ of \eqref{eq:NFE_specific} starting from $g$ at $t=0$: 
\begin{equation}
\label{eq:def_flow}
\psi_t(g)(x)=e^{-t}g(x)+\int_0^t e^{-(t-s)} \int_S \cos(x-y) f_{\kappa,\varrho}(\psi_s(g)(x))ds.
\end{equation}

\section{Stability of wandering bumps for interacting Hawkes processes}\label{S:R}
\subsection{Main results}\label{S:main_result}
\subsubsection{Stationary solutions to  \eqref{eq:NFE_specific}}\label{S:stationary_result}

We are concerned here with the stationary solutions to \eqref{eq:NFE_specific}, that is
\begin{equation}
\label{eq:NFE_stat_gen}
u(x)= \int_{-\pi}^{\pi} \cos(x-y) f(u(y))dy.
\end{equation} 
We follow a similar approach to \cite{kilpatrick2013}, see Appendix \ref{S:stat_H}.

\begin{remark}\label{rem:invariance}
For a general choice of $f$, if $u$ is solution to \eqref{eq:NFE_stat_gen}, then for any $\phi$, $x\mapsto u(x+\phi)$ is also solution to \eqref{eq:NFE_stat_gen} by invariance of $S$. Expanding the cosine, \eqref{eq:NFE_stat_gen} becomes
$$u(x)= \cos(x) \int_{-\pi}^{\pi} \cos(y) f(u(y))dy +  \sin(x) \int_{-\pi}^{\pi} \sin(y)f(u(y))dy.$$ 
By translation symmetry, with no loss of generality we can ask  $\int_{-\pi}^{\pi} \sin(y)f(u(y))dy=0$ and solving \eqref{eq:NFE_stat_gen} means finding $A\geq 0$ such that
\begin{equation}\label{eq:A_self}
A=  \int_S \cos (y) f\left( A \cos(y) \right)dy.
\end{equation}
As \eqref{eq:NFE_stat_gen} is invariant by translation, any $A$ solution to \eqref{eq:A_self} gives rise to the set $\mathcal{U}_A:=\left\{x\mapsto A\cos(x+\phi),~ \phi \in [-\pi, \pi]\right\}$ of stationary solutions to \eqref{eq:NFE_stat_gen}.
\end{remark}

Recall \eqref{eq:def_heaviside}, when $f=H_\varrho$ the Heaviside function with threshold $\varrho$, \cite{kilpatrick2013} and \cite{veltzFaugeras2010} showed that for $\varrho\in [-1,1]$, the unique solutions to \eqref{eq:A_self}  are
\begin{equation}\label{eq:def_A0}
A=0, \quad A_-(0)=\sqrt{1+\varrho} - \sqrt{1-\varrho} \quad \text{ and }
A_+(0):=\sqrt{1+\varrho} + \sqrt{1-\varrho}.
\end{equation}
This result is recalled in Appendix \ref{S:stat_H}. One can show that the set $\mathcal{U}_{A_-(0)}$ is unstable whereas $\mathcal{U}_{A(0)}$ and $\mathcal{U}_{0}$ are locally stable. In the following we focus on the largest fixed point $A_+(0)$ which we rename for $A(0)$ by convenience. Recall that in the paper, we are under the assumption that $f=f_{\kappa,\varrho}$ defined in \eqref{eq:def_sigmoid} for a small fixed $\kappa$. As $f_{\kappa,\varrho}\xrightarrow[\kappa \to 0]{}H_\varrho$, our first result is that when $\kappa$ is close enough to 0, we can still find a stationary solution to \eqref{eq:NFE_specific} of the form $u=A(\kappa)\cos$ where $A(\kappa)$ is also close to $A(0)$. 

\begin{prop}\label{prop:stat_sol_sig}
Assume $\varrho\in (-1,1)$. Then there exists $\kappa_0>0$ and a function $A:(0,\kappa_0)\to (\vert \varrho \vert,+\infty)$ of class $C^1$ such that  for any $\kappa \in (0,\kappa_0)$, $u=A(\kappa) \cos$ is a stationary solution to \eqref{eq:NFE_specific} when $f=f_{\kappa,\varrho}$ and $A(\kappa)\xrightarrow[\kappa\to 0]{} A(0)$ given in \eqref{eq:def_A0}.
Moreover, there exists  $\kappa_1\in (0,\kappa_0)$ such that for any $\kappa \in (0,\kappa_1)$,  $1<I(1,\kappa)<2$ for $I(1,\kappa):=\int_S f_{\kappa,\varrho}'(A(\kappa)\cos(x))dx$.
\end{prop}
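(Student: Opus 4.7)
The plan is to reduce the stationarity problem to a scalar self-consistency equation for the amplitude $A$, solve it explicitly in the Heaviside limit $\kappa = 0$, and then propagate the solution to $\kappa > 0$ by a continuity and implicit function argument tailored to the singular behavior of $f_{\kappa,\varrho}$ as $\kappa \downarrow 0$.

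By Remark \ref{rem:invariance}, $u = A\cos$ solves \eqref{eq:NFE_specific} with $f = f_{\kappa,\varrho}$ if and only if $H(\kappa, A) := G(\kappa, A) - A = 0$, where $G(\kappa, A) := \int_S \cos(y) f_{\kappa,\varrho}(A\cos(y))\, dy$. Interpreting $f_{0,\varrho}$ as $H_\varrho$, a direct computation via the change of variable $u = A\cos y$ on $(0,\pi)$ gives $G(0, A) = 2\sqrt{1 - \varrho^2/A^2}$ for $A > |\varrho|$, so the fixed point equation $A = G(0, A)$ becomes $A^4 - 4A^2 + 4\varrho^2 = 0$, whose largest root is precisely $A(0) = \sqrt{1+\varrho} + \sqrt{1-\varrho}$ from \eqref{eq:def_A0}. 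A similar computation yields
\[
\partial_A G(0, A(0)) = \frac{2\varrho^2}{A(0)^2 \sqrt{A(0)^2-\varrho^2}} = \frac{\varrho^2}{(1+\sqrt{1-\varrho^2})^2} < 1,
\]
so $\partial_A H(0, A(0)) < 0$, establishing transversality at the Heaviside fixed point.

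To transfer this to $\kappa > 0$, I show joint continuity at $(0, A(0))$ of both $G$ and $\partial_A G$. For $G$ this is dominated convergence (with $|f_{\kappa,\varrho}| \leq 1$), using that $f_{\kappa,\varrho} \to H_\varrho$ pointwise off the discrete set $\{A(0)\cos y = \varrho\}$. For $\partial_A G$ the same change of variable gives
\[
\partial_A G(\kappa, A) = \frac{2}{A^2}\int_{-A}^A \frac{u^2 f'_{\kappa,\varrho}(u)}{\sqrt{A^2-u^2}}\, du,
\]
and since $f'_{\kappa,\varrho}$ is a positive probability density concentrating at $\varrho$, while the weight $u \mapsto u^2/\sqrt{A^2-u^2}$ is continuous at $u = \varrho$ provided $|\varrho| < A$, a standard approximation-of-identity argument yields the desired convergence as $(\kappa, A) \to (0, A(0))$. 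Combined with $\partial_A H(0, A(0)) < 0$, a quantitative intermediate-value and strict monotonicity argument produces, for $\kappa \in (0, \kappa_0)$ with $\kappa_0$ small, a unique zero $A(\kappa)$ of $H(\kappa, \cdot)$ in a fixed neighborhood of $A(0)$, with $A(\kappa) \to A(0)$. The $C^1$ regularity of $\kappa \mapsto A(\kappa)$ on $(0, \kappa_0)$ then follows from the classical implicit function theorem applied on the open set where $(\kappa, A) \mapsto G(\kappa, A)$ is smooth, and $A(\kappa) > |\varrho|$ follows by continuity from the elementary estimate $A(0) = \sqrt{1+\varrho}+\sqrt{1-\varrho} > |\varrho|$ valid for $\varrho \in (-1,1)$.

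For the bound on $I(1, \kappa)$, the same change of variable gives
\[
I(1, \kappa) = 2\int_{-A(\kappa)}^{A(\kappa)} \frac{f'_{\kappa,\varrho}(u)}{\sqrt{A(\kappa)^2 - u^2}}\, du \xrightarrow[\kappa \to 0]{} \frac{2}{\sqrt{A(0)^2-\varrho^2}} = \frac{2}{1+\sqrt{1-\varrho^2}},
\]
justified again by the approximation-of-identity argument since $|\varrho| < A(\kappa)$ uniformly in $\kappa$ small. Under the paper's running assumption $\varrho \in (0,1)$, this limit lies strictly inside $(1, 2)$, and continuity of $\kappa \mapsto I(1, \kappa)$ yields the desired $\kappa_1$. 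The main obstacle throughout is the singular nature of the limit $\kappa \to 0$: one cannot apply IFT directly at $\kappa = 0$ since $f'_{\kappa,\varrho}$ blows up as a Dirac mass at $\varrho$. The technical key is the change of variable, which trades the singular integrand $f'_{\kappa,\varrho}$ against a smooth weight that is continuous near $u = \varrho$ precisely because $|\varrho| < A(0)$; this separates the concentration of $f'_{\kappa,\varrho}$ from the integrable singularity at $u = \pm A$, allowing standard approximate-identity arguments to close the continuity estimates.
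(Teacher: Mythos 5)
Your proposal is correct and takes a genuinely different route from the paper's. Both you and the paper reduce to the scalar equation $A = G(\kappa, A)$, compute explicitly at $\kappa = 0$, and use the substitution $u = A\cos y$ combined with an approximate-identity argument (the paper's Lemma \ref{lem:approximate_id}) to pass to the Heaviside limit. The difference is what you do with that limit. The paper defines an extension of $g(\kappa,a) := a - G(\kappa,a)$ for $\kappa \leq 0$ via $H_\varrho$, proves $g$ is jointly $C^1$ on $\mathbb{R}\times(|\varrho|,\infty)$ — which requires computing \emph{both} $\lim_{\kappa\to 0}\partial_a g(\kappa,a)$ and $\lim_{\kappa\to 0}\partial_\kappa g(\kappa,a)$ (the latter is the bulk of the paper's effort, with a careful two-sided Taylor analysis of $F(\kappa)$ and $G(\kappa)$ to show the limit is $0$) — and then applies the implicit function theorem directly at $(0, A_+(0))$. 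You instead establish only joint continuity of $G$ and $\partial_A G$ at $(0,A(0))$, derive existence, uniqueness, and convergence $A(\kappa)\to A(0)$ from an intermediate-value and strict-monotonicity argument in a small box, and obtain $C^1$ regularity on the open interval $(0,\kappa_0)$ by applying the classical IFT only at points $\kappa > 0$ where $f_{\kappa,\varrho}$ is smooth. This sidesteps the $\partial_\kappa g$ computation entirely, at the cost of forgoing differentiability at $\kappa=0$ — which the proposition does not claim anyway. Your observation about the edge case $\varrho=0$ is also well taken: at $\varrho=0$ one has $I(1,0)=1$ exactly, so the strict inequality $I(1,\kappa)>1$ for small $\kappa$ relies on $\varrho\neq 0$ (consistent with the paper's standing assumption $\varrho\in(0,1)$ from \eqref{eq:def_sigmoid}, though the proposition is stated for $\varrho\in(-1,1)$); the paper's argument ``$2\sqrt{1-\varrho^2}-\varrho^2<2$'' is indeed strict only for $\varrho\neq 0$.
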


Proposition \ref{prop:stat_sol_sig} is based on a simple implicit function argument and is proved in the Appendix \ref{S:stat_sol_sig}. An illustration of this Proposition is done in Figure \ref{fig:fixed}: we see that for each $A$ solving \eqref{eq:A_self} for the Heaviside function, there is indeed another close $A$ solving \eqref{eq:A_self} for the sigmoid function with small $\kappa$. For the rest of the paper we fix $\varrho\in (-1,1)$, $\kappa<\kappa_1$ and  $A=A(\kappa)$ and may omit the indexes $(\kappa,\varrho)$. We have then established that
\begin{equation}\label{eq:def_U_circle}
\mathcal{U}:=\left(A\cos(\cdot+\phi)\right)_{\phi \in S}=: \left(u_{\phi}\right)_{\phi \in S}
\end{equation}
is a set of stationary solutions to \eqref{eq:NFE_specific}, which is a manifold parameterized by the circle $S$. To study the stability of these stationary solutions, we introduce linear operators that are also parameterized by the circle $S$.
 
 \begin{figure}
\centering
\includegraphics[width=0.7\textwidth]{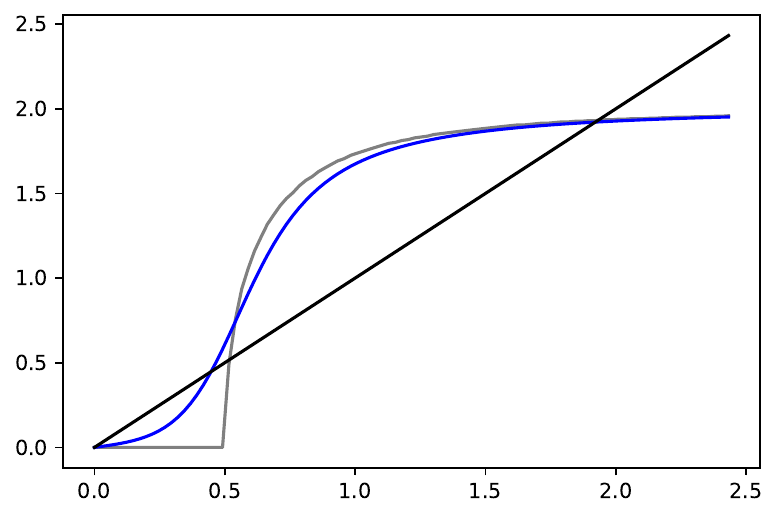}
\caption{Graph of $G:A\mapsto \int_S A\cos(x)f\left(A\cos(x)\right)dx$}
\label{fig:fixed}
\fnote{We represent the fixed-point function $G$ appearing in \eqref{eq:A_self} for the choice $f=H_{\varrho}$ defined in \eqref{eq:def_heaviside} in gray and its smooth version with $f=f_{\kappa,\varrho}$, defined in \eqref{eq:def_sigmoid} in blue. We chose $(\kappa,\varrho)=\left(\frac{1}{10},\frac{1}{2}\right)$.  The black line is the graph of $y=x$ and its intersections with the two other lines give the fixed points of $G$. Note that we are interested here on the fixed point on the far right, that is $A(0)$ for the gray line and $A(\kappa)$ for the blue line.}
\end{figure}

\begin{deff}\label{def:ops_phi}
Let $\phi\in S$, and define for any function $\psi\in L^2(S)$
\begin{align}
T_{\phi}\psi(x)&:=\int_S \cos(x-y)f'({u_{\phi}}(y))\psi(y)dy\label{eq:def_T_phi}\\
\mathcal{L}_{\phi}\psi&:=-\psi + T_{\phi}\psi \label{eq:def_L_phi}.
\end{align}
Define also $L^2_{\phi}:=L^2_{f'(u_{\phi})}$, that is the $L^2$ weighted space defined by the scalar product
$$\langle g_1,g_2\rangle_{2,\phi} = \int_S g_1(x)g_2(x)f '(u_{\phi}(x))dx.$$
We denote by $\Vert \cdot \Vert_{2,\phi}$ the associated norm. Recall \eqref{eq:def_U_circle} and define
\begin{equation}
\label{eq:def_vphi}
v_{\phi}:=\partial_x u_{\phi}=-A \sin(\cdot+\phi).
\end{equation}
We consider also the orthogonal projection $P_{\phi}^\circ$ on $\text{Span}(v_{\phi})$  and its complementary projection $P_{\phi}^\perp$, both defined for any $g\in L^2_\phi$ by
\begin{align}
P_{\phi}^\circ g&:= \dfrac{\langle g,v_{\phi}\rangle_{2,\phi}}{\Vert v_{\phi} \Vert_{2,\phi}} v_{\phi} =: \alpha_\phi^\circ(g) v_\phi\label{eq:def_proj_u'phi}\\
P_{\phi}^\perp g&:= g-P_{\phi}^\circ g.\label{eq:def_proj_stable}
\end{align}
We will also need the projection on $\text{Span}(u_{\phi})$ hence we define
\begin{equation}
\label{eq:def_proj_alpha_gamma}
\alpha_\phi^\gamma(g) =\dfrac{\langle g,u_{\phi}\rangle_{2,\phi}}{\Vert u_{\phi} \Vert_{2,\phi}}.
\end{equation}
\end{deff}

\begin{remark}\label{rem:op_phi} Without ambiguity and for a general $\phi$, we may write $\Vert\cdot\Vert_{\phi}$ instead of $\Vert \cdot\Vert_{2,\phi}$ to gain in clarity.
Note that by compactness of $S$, since $0<\inf_{[-A,A]} f' <\sup_{[-A,A]}f'<\infty$, the norms $\Vert \cdot \Vert_2$ and $\Vert \cdot\Vert_{2,\phi}$ are equivalent: there exists $C_0, \widetilde{C}_0>0$ (independent of $\phi$) such that for any $g\in L^2(S)$,
\begin{equation}\label{eq:cst_C0}
 \widetilde{C}_0 \Vert g \Vert_{2} \leq \sup_{\phi\in S} \Vert g \Vert_{2,\phi} \leq C_0 \Vert g \Vert_{2}.
\end{equation}
\end{remark}

\begin{prop}\label{prop:spect_L_phi} Let $\phi\in S$. The operator $\mathcal{L}_{\phi}$ defined in \eqref{eq:def_L_phi} is self-adjoint in $L^2_{\phi}$ and has three distinct eigenvalues, $-1$, $0$ and $ \gamma\in (-1, 0)$. If for $ \iota\in \left\lbrace -1, \gamma, 0\right\rbrace$, we denote by $ \mathcal{ E}_{ \iota}$ the eigenspace associated to the eigenvalue $ \iota$, one has that $ \mathcal{ E}_{ 0}= {\rm Ker} \mathcal{ L}_{ \phi}= {\rm Span} (v_{ \phi})$, $ \mathcal{ E}_{ \gamma}= {\rm Span}(u_{ \phi})$ and $ \mathcal{ E}_{ -1}= ({\rm Span}(u_{ \phi}, v_{ \phi}))^{ \perp}$. Moreover, $ \mathcal{ E}_{ 0}\perp \mathcal{ E}_{ \gamma}$. Furthermore, there exists $C_\mathcal{L}$, $C_P$ such that for any $\phi\in S$, $\mathcal{L}_{\phi}$ generates an analytic semigroup of contraction $\left(e^{t\mathcal{L}_\phi}\right)$ and for any $g\in L^2_{\phi}$, $t\geq 0$,
\begin{align}
\Vert e^{t\mathcal{L}_{\phi}} P_{\phi}^\perp  g \Vert_{2,\phi} &\leq e^{t\gamma}\Vert P_{\phi}^\perp g\Vert_{\phi},\label{eq:contraction_Ps_phi} \\
\Vert e^{t\mathcal{L}_\phi}g\Vert_{2}&\leq C_\mathcal{L} \Vert g \Vert_{2}, \label{eq:op_Lphi_borne1}\\
\Vert e^{t\mathcal{L}_\phi}P_{\phi}^\perp g \Vert_{2,\phi} &\leq C_P \Vert g \Vert_{2,\phi}. \label{eq:op_Lphi_borne2}
\end{align}
\end{prop}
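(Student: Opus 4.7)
The plan is to exploit the rank-two structure of $T_\phi$ together with its self-adjointness in the weighted space $L^2_\phi$. First I would verify self-adjointness: by Fubini and the symmetry $\cos(x-y) = \cos(y-x)$,
\[
\langle T_\phi \psi_1, \psi_2\rangle_{2,\phi} = \iint_{S\times S} \cos(x-y)\, f'(u_\phi(x))\, f'(u_\phi(y))\, \psi_1(y)\, \psi_2(x)\, dx\, dy = \langle \psi_1, T_\phi \psi_2\rangle_{2,\phi}.
\]
Then, expanding $\cos(x-y) = \cos x \cos y + \sin x \sin y$, one sees that the range of $T_\phi$ is contained in $\mathrm{Span}(\cos, \sin)$, which coincides with $\mathrm{Span}(u_\phi, v_\phi)$ since this pair is a rotation of $(\cos, \sin)$. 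Hence $T_\phi$ has rank at most two, and its kernel in $L^2_\phi$ is exactly the $L^2_\phi$-orthogonal complement of $\mathrm{Span}(u_\phi, v_\phi)$. On this kernel $\mathcal{L}_\phi = -I$, giving $-1$ as an eigenvalue and identifying $\mathcal{E}_{-1}$ as claimed.

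Next, I would compute the action of $T_\phi$ on the two-dimensional invariant space. Differentiating the stationary identity $u_\phi(x) = \int_S \cos(x-y) f(u_\phi(y))\, dy$ in $\phi$ immediately yields $v_\phi = T_\phi v_\phi$, i.e. $v_\phi \in \ker \mathcal{L}_\phi$. A direct calculation using $u_\phi = A\cos(\cdot+\phi)$ and the parity of $z \mapsto f'(A\cos z)$, after translating the integration variable, gives
\[
T_\phi u_\phi(x) = A\cos(x+\phi) \int_S \cos^2(z) f'(A\cos z)\, dz =: J\, u_\phi(x),
\]
the cross term vanishing by oddness. Integration by parts in the fixed-point relation \eqref{eq:A_self} produces $\int_S \sin^2(z) f'(A\cos z)\, dz = 1$, so that $J = I(1,\kappa) - 1$, and the bound $1 < I(1,\kappa) < 2$ from Proposition \ref{prop:stat_sol_sig} forces $\gamma := J - 1 \in (-1, 0)$. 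Since $J \neq 1$, the two eigenvalues on the invariant subspace are distinct, which yields $\mathcal{E}_0 = \mathrm{Span}(v_\phi)$ and $\mathcal{E}_\gamma = \mathrm{Span}(u_\phi)$ exactly. The orthogonality $\mathcal{E}_0 \perp \mathcal{E}_\gamma$ is automatic from self-adjointness and can be checked directly via the odd integrand $\sin z \cos z\, f'(A\cos z)$ after translation.

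Finally, since $\mathcal{L}_\phi$ is bounded and self-adjoint in $L^2_\phi$ with spectrum $\{-1, \gamma, 0\}$, it generates a uniformly continuous (hence analytic) semigroup acting diagonally as $e^{-t}$, $e^{\gamma t}$ and $1$ on the three spectral subspaces. Since $P_\phi^\perp$ removes the $\mathcal{E}_0$-component, the spectral theorem gives \eqref{eq:contraction_Ps_phi} directly. The estimates \eqref{eq:op_Lphi_borne1} and \eqref{eq:op_Lphi_borne2} then follow from the equivalence of $\Vert\cdot\Vert_2$ and $\Vert\cdot\Vert_{2,\phi}$ \emph{uniformly in} $\phi$ (Remark \ref{rem:op_phi}), together with $e^{\gamma t} \leq 1$ and $\Vert P_\phi^\perp g\Vert_{2,\phi} \leq \Vert g\Vert_{2,\phi}$. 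The only mildly delicate point is securing the strict inequality $\gamma < 0$, equivalently $I(1,\kappa) < 2$; this is exactly what the smallness condition $\kappa < \kappa_1$ of Proposition \ref{prop:stat_sol_sig} buys us, so no substantive obstacle remains beyond the rank-two bookkeeping.
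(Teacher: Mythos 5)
Your proposal is correct and follows essentially the same route as the paper: self-adjointness via Fubini, the rank-two structure of $T_\phi$ forcing $\mathcal{E}_{-1}=(\mathrm{Span}(u_\phi,v_\phi))^\perp$, the identities $\mathcal{I}(\sin^2)=1$, $\mathcal{I}(\cos^2)=\mathcal{I}(1)-1$, $\mathcal{I}(\sin\cos)=0$ to diagonalize on the invariant plane, and the spectral decomposition for the semigroup bounds. The only (harmless) variations are that you obtain the Goldstone mode $T_\phi v_\phi=v_\phi$ by differentiating the stationary identity in $\phi$ rather than via the integration by parts of Lemma \ref{lem:computation_A}, and that you invoke the spectral theorem for a bounded self-adjoint operator in place of the paper's appeal to semigroup theory.
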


Proposition \ref{prop:spect_L_phi} is proved in Section \ref{S:stat_sol_stab}.
A straightforward corollary of Proposition \ref{prop:spect_L_phi} is the following
\begin{cor}\label{cor:U_stable}
The manifold $\mathcal{U}$ is locally stable under the flow \eqref{eq:NFE_specific}: there exists $\varepsilon_0>0$ such that, for any $g\in L^2(S)$ satisfying $\text{dist}_{L^2}(g,\mathcal{U})\leq \varepsilon_0$, we have  $\lim_{t\to\infty}\text{dist}_{L^2}(\psi_t(g),\mathcal{U})=0$ where $\psi$ is defined in \eqref{eq:def_flow}. We denote by $B(\mathcal{U},\varepsilon_0):=\left\{g\in L^2(I), \text{dist}_{L^2}(g,\mathcal{U})\leq \varepsilon_0\right\}$.
\end{cor}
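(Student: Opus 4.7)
\medskip

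\noindent\textbf{Proof plan for Corollary \ref{cor:U_stable}.}

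The plan is to carry out a classical orbital stability argument for the manifold $\mathcal{U}$ based on the spectral information in Proposition~\ref{prop:spect_L_phi}. First, I would set up a tubular decomposition of a neighborhood of $\mathcal{U}$: for $g \in L^2(S)$ at distance at most $\varepsilon_0$ from $\mathcal{U}$ (with $\varepsilon_0$ small), there exist unique $\phi_0 = \phi_0(g)\in S$ and $r_0 = r_0(g)\in L^2_{\phi_0}$ such that
\begin{equation*}
g = u_{\phi_0} + r_0, \qquad P_{\phi_0}^\circ r_0 = 0, \qquad \text{i.e. } \langle r_0, v_{\phi_0}\rangle_{2,\phi_0}=0.
\end{equation*}
The existence and smoothness of $(\phi_0, r_0)$ follows from the implicit function theorem applied to $F(\phi, g) := \langle g - u_\phi, v_\phi\rangle_{2,\phi}$: one has $\partial_\phi F(\phi, u_\phi) = -\|v_\phi\|_{2,\phi}^2 \neq 0$, uniformly in $\phi \in S$ by compactness, which yields a uniform neighborhood where the decomposition is well-defined (this is essentially the content anticipated by Proposition~\ref{prop:phase_proj} in the paper's outline).

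Next, I would propagate this decomposition along the flow. Writing $\psi_t(g) = u_{\phi(t)} + r(t)$ and enforcing $P_{\phi(t)}^\circ r(t) = 0$ for all $t$, a direct differentiation of \eqref{eq:NFE_specific} using the stationarity $-u_{\phi} + T_w f(u_\phi) = 0$ (with $T_w$ the cosine convolution) yields
\begin{equation*}
\partial_t r(t) = \mathcal{L}_{\phi(t)} r(t) + N(\phi(t), r(t)) - \dot\phi(t)\, v_{\phi(t)},
\end{equation*}
where $N(\phi, r) := T_w\bigl[f(u_\phi + r) - f(u_\phi) - f'(u_\phi) r\bigr]$ is quadratic in $r$ (since $f = f_{\kappa,\varrho}$ is smooth with bounded derivatives), so $\|N(\phi, r)\|_{2,\phi} \leq C \|r\|_{2,\phi}^2$ uniformly in $\phi$. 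Projecting the orthogonality condition $\langle r(t), v_{\phi(t)}\rangle_{2,\phi(t)}=0$ onto the kernel direction gives a scalar ODE expressing $\dot\phi(t)$ as a function of $r(t)$; crucially $\dot\phi(t) = O(\|r(t)\|_{2,\phi(t)}^2)$ because the leading linear contribution vanishes by orthogonality.

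Finally, I would apply $P_{\phi(t)}^\perp$ to the evolution equation and run a bootstrap/Gronwall argument on $\rho(t) := \|r(t)\|_{2,\phi(t)}$. Using the semigroup contraction \eqref{eq:contraction_Ps_phi} together with the Duhamel representation of $P_{\phi(t)}^\perp r(t)$, the quadratic bound on $N$, the bound $|\dot\phi(t)| \leq C\rho(t)^2$, and the fact that $\partial_\phi P_\phi^\perp$ is bounded uniformly in $\phi$, one obtains an inequality of the form
\begin{equation*}
\rho(t) \leq e^{\gamma t}\rho(0) + C\int_0^t e^{\gamma(t-s)} \rho(s)^2\, ds.
\end{equation*}
Since $\gamma < 0$, a standard continuity argument shows that if $\rho(0)$ is small enough (which can be ensured by shrinking $\varepsilon_0$, using the norm equivalence \eqref{eq:cst_C0} to pass between $\|\cdot\|_2$ and $\|\cdot\|_{2,\phi}$), then $\rho(t) \leq 2 e^{\gamma t/2}\rho(0) \to 0$ as $t \to \infty$. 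Because $\mathrm{dist}_{L^2}(\psi_t(g), \mathcal{U}) \leq \|\psi_t(g) - u_{\phi(t)}\|_2 \leq \widetilde{C}_0^{-1} \rho(t)$, this yields the claimed convergence.

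\medskip

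\noindent\textbf{Main obstacle.} The delicate point is not any single estimate but controlling the interplay between the moving frame $\phi(t)$ and the $\phi$-dependent functional framework: the contraction rate is naturally expressed in $\|\cdot\|_{2,\phi(t)}$ whereas the distance to $\mathcal{U}$ is in $\|\cdot\|_2$, and the projectors $P_{\phi(t)}^\perp$ themselves evolve. This is handled by the uniform equivalence of norms in \eqref{eq:cst_C0} and the smooth dependence of $(u_\phi, v_\phi, P_\phi^\circ)$ on $\phi$, but it is the piece that needs to be tracked carefully to close the bootstrap.
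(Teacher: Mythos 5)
Your proof plan is correct and supplies exactly what the paper leaves implicit: the text merely declares Corollary~\ref{cor:U_stable} ``a straightforward corollary of Proposition~\ref{prop:spect_L_phi}'' and gives no proof, so there is no written argument in the paper to compare against. What you write out is the standard orbital-stability argument for a normally-hyperbolic invariant manifold of equilibria: variational phase decomposition (Proposition~\ref{prop:phase_proj}), derivation of the slaving relation $\dot\phi = O(\|r\|_{2,\phi}^2)$ by pairing the evolution equation with $v_\phi$ and using $\langle \mathcal{L}_\phi r, v_\phi\rangle_\phi = 0$, and a Gronwall/bootstrap closing on the spectral gap $\gamma<0$ via \eqref{eq:contraction_Ps_phi} and the norm equivalence \eqref{eq:cst_C0}. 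This is also spiritually the same discretize-and-iterate machinery the paper deploys at the stochastic level in Section~\ref{S:long_time}, so your route is consistent with the paper's overall framework; the only point worth flagging explicitly in a full write-up is that the Duhamel representation must be handled with care (e.g.\ by freezing $\phi$ on short intervals or tracking the commutator $\dot\phi\,\partial_\phi P^\perp_\phi$, itself of order $\|r\|^2$) since $\mathcal{L}_{\phi(t)}$ and $P^\perp_{\phi(t)}$ are time-dependent.
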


\subsubsection{Representation on the manifold}\label{S:projection}

Recall that we are interested in the behaviour of the process \eqref{eq:def_UN}, when the initial condition $U_N(0)$ to \eqref{eq:def_UN} is close to the manifold $\mathcal{U}$ introduced in \eqref{eq:def_U_circle}. We need a way to define a proper phase reduction of $U_N$ along $\mathcal{U}$. We have two ways to do so that we use in our results that are well explained in the recent work \cite{AdamsMacLaurin2022arxiv}, which takes the NFE as a good class of examples and motivation. The first one is via the \textit{variational phase}, defined in the following Proposition \ref{prop:phase_proj}:
 
\begin{prop}[Variational phase]\label{prop:phase_proj}
There exists $\varpi>0$ such that, for any $g\in L^2(S)$ satisfying $\textnormal{dist}_{L^2(S)}(g,\mathcal{U})\leq \varpi$, there exists a unique phase $\phi:=\textnormal{proj}_\mathcal{U}(g)\in S$ such that $P_{\phi}^\circ (g-u_{\phi})=0$ and the mapping $g\mapsto \textnormal{proj}_\mathcal{U}(g)$ is smooth.
\end{prop}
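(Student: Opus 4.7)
The plan is to apply the implicit function theorem to the equation encoding the projection condition, then use compactness of $\mathcal{U}$ to obtain a uniform neighborhood.

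Define $F : L^2(S) \times S \to \mathbb{R}$ by
$$F(g, \phi) := \langle g - u_\phi, v_\phi \rangle_{2,\phi} = \int_S (g(x) - u_\phi(x))\, v_\phi(x)\, f'(u_\phi(x))\, dx.$$
Since $u_\phi = A\cos(\cdot + \phi)$, $v_\phi = -A\sin(\cdot + \phi)$, and $f'(u_\phi)$ are smooth trigonometric expressions in $\phi$, and $F$ is affine in $g$, the map $F$ is jointly $C^\infty$. By Definition \ref{def:ops_phi}, the condition $P_\phi^\circ(g - u_\phi) = 0$ is equivalent to $F(g, \phi) = 0$.

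For any $\phi_0 \in S$ one has $F(u_{\phi_0}, \phi_0) = 0$. Using the identity $\partial_\phi u_\phi = v_\phi$, differentiating in $\phi$ and evaluating at $(u_{\phi_0}, \phi_0)$ — where every term containing the factor $g - u_\phi$ vanishes — gives
$$\partial_\phi F(u_{\phi_0}, \phi_0) = -\|v_{\phi_0}\|_{2,\phi_0}^2.$$
With the explicit form of $v_{\phi_0}$ and the norm equivalence \eqref{eq:cst_C0}, this is strictly negative and bounded away from $0$ uniformly in $\phi_0 \in S$. The implicit function theorem therefore yields, for each $\phi_0$, radii $\epsilon(\phi_0), \delta(\phi_0) > 0$ and a smooth solution map $\Phi_{\phi_0} : B_{L^2}(u_{\phi_0}, \epsilon(\phi_0)) \to (\phi_0 - \delta(\phi_0), \phi_0 + \delta(\phi_0))$ with $F(g, \Phi_{\phi_0}(g)) = 0$, unique in that interval. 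Since $\mathcal{U}$ is compact, a finite subcover produces uniform radii $\epsilon, \delta > 0$; setting $\varpi \leq \epsilon$, every $g \in B(\mathcal{U}, \varpi)$ lies in some $B_{L^2}(u_{\phi_*}, \epsilon)$, and one sets $\textnormal{proj}_\mathcal{U}(g) := \Phi_{\phi_*}(g)$. Local uniqueness forces overlapping local maps $\Phi_{\phi_0}$ to agree on overlaps, so $\textnormal{proj}_\mathcal{U}$ is a well-defined $C^\infty$ map on $B(\mathcal{U}, \varpi)$.

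The genuine subtlety is the global uniqueness of $\phi \in S$. A direct parity computation — change of variables $y = x + \phi$ together with the evenness of $f'(A\cos y)$ in $y$ — gives $F(u_{\phi_0}, \phi) = A^2 C\, \sin(\phi_0 - \phi)$ with $C := \int_S \sin^2(y)\, f'(A\cos y)\, dy > 0$, showing that a second root persists at $\phi_0 + \pi$ (and for nearby $g$). At this antipodal root, however, $\|g - u_\phi\|_{L^2}$ is of order $A$, hence far larger than $\varpi$ once $\varpi$ is chosen sufficiently small compared to $A$. This spurious root is thus excluded by the tubular neighborhood constraint, and $\textnormal{proj}_\mathcal{U}(g)$ is unambiguously the root close to the minimiser of $\phi \mapsto \|g - u_\phi\|_{L^2}$. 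This global-versus-local selection is the main obstacle; everything else is routine implicit function theorem bookkeeping.
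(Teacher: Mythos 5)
Your proof is correct and follows essentially the same route as the paper: the same function $F(g,\phi)=\langle g-u_\phi,v_\phi\rangle_{2,\phi}$, the same computation $\partial_\phi F(u_{\phi_0},\phi_0)=-\Vert v_{\phi_0}\Vert_{2,\phi_0}^2=-A^2\neq 0$, the implicit function theorem, and compactness of $\mathcal{U}$ to get a uniform $\varpi$. Your additional observation that $F(u_{\phi_0},\cdot)$ has a second (antipodal) root which must be excluded by the tubular-neighborhood constraint is a genuine point of care that the paper's proof passes over silently, and it correctly pins down the sense in which "unique" should be read.
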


The second one is via the \textit{isochronal phase}, defined in the following Proposition \ref{prop:ex_isochron}. In a few words, as the manifold $\mathcal{U}$ is stable and attractive, a solution to the NFE from a neighborhood of $\mathcal{U}$ is attracted to $\mathcal{U}$ and converges to it. As $t\to\infty$, it identifies with one stationary solution of the manifold, we called it its isochron. 

\begin{prop}[Isochronal phase]\label{prop:ex_isochron} For any $g\in B(\mathcal{U},\varepsilon_0)$ (see Corollary \ref{cor:U_stable}), there exists a unique $\theta(g) \in S$ such that
\begin{equation}\label{eq:def_isochronal_phase}
\Vert \psi_t(g)-u_{\theta(g)}\Vert_{2} \xrightarrow[t\to\infty]{}0,
\end{equation}
where $\psi$ is defined in \eqref{eq:def_flow}. Such a map $\theta:B(\mathcal{U},\varepsilon_0)\to S$ is called the isochronal map of $\mathcal{U}$, and $\theta(g)$ is the isochronal phase of $g$. Moreover, it is three times continuously Fréchet differentiable (in fact $C^\infty$), and in particular for $u_\phi\in \mathcal{U}$, $h,l\in L^2(S)$, we have
\begin{equation}\label{eq:diff_theta}
D\theta(u_\phi)[h]= \dfrac{\langle v_\phi,h\rangle_\phi}{\Vert v_\phi\Vert_\phi}, \text{ and}
\end{equation}
\begin{multline}\label{eq:diff2_theta}
D^2\theta(u_\phi)[h,l] = \dfrac{1}{2A^2} \left(  \alpha_\phi^\circ(h)\beta_\phi(v_\phi,l) +\alpha_\phi^\circ(l)\beta_\phi(v_\phi,h)+\beta_{ \phi}(h,l)\right) \\
+ \frac{ 1+ \gamma}{ 2A^{ 2}(1- \gamma)}\left(  \alpha_\phi^\gamma(h)\beta_\phi(u_\phi,l) +\alpha_\phi^\gamma(l)\beta_\phi(u_\phi,h)\right) - \frac{ (2- \gamma)(1+ \gamma)}{ 2(1-\gamma)} \left( \alpha_\phi^\circ(h)\alpha_\phi^\circ(l)+\alpha_\phi^\gamma(h)\alpha_\phi^\gamma(l)\right),
\end{multline}
where $\alpha_\phi^\circ$ and $\alpha_\phi^\gamma$ are respectively defined in \eqref{eq:def_proj_u'phi} and \eqref{eq:def_proj_alpha_gamma}, and
\begin{equation}
\beta_\phi(h,l) :=\int_S f''(u_\phi(y))v_\phi(y) h(y)l(y)dy. \label{eq:def_beta_phi2}
\end{equation}
\end{prop}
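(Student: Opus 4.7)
The plan is to construct $\theta$ as the limit of the variational phase along the flow, and then to derive \eqref{eq:diff_theta}--\eqref{eq:diff2_theta} by differentiating the invariance $\theta\circ\psi_{t}=\theta$ that characterizes the isochron. For $g\in B(\mathcal{U},\varepsilon_{0})$, Corollary \ref{cor:U_stable} gives $\text{dist}_{L^{2}}(\psi_{t}(g),\mathcal{U})\to 0$, so for $t$ large enough $\psi_{t}(g)$ lies in the domain of $\text{proj}_{\mathcal{U}}$ from Proposition \ref{prop:phase_proj}. I would set $\phi(t):=\text{proj}_{\mathcal{U}}(\psi_{t}(g))$ and decompose $\psi_{t}(g)=u_{\phi(t)}+q(t)$ with $P_{\phi(t)}^{\circ}q(t)=0$. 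A standard phase-reduction argument based on linearizing \eqref{eq:NFE_specific} around $u_{\phi(t)}$ and using the uniform contraction \eqref{eq:contraction_Ps_phi} on $P_{\phi}^{\perp}$ yields $\Vert q(t)\Vert_{2}\leq Ce^{\gamma t/2}\Vert q(0)\Vert_{2}$ and $\vert\dot{\phi}(t)\vert\leq C\Vert q(t)\Vert_{2}^{2}$. Hence $\phi(t)$ has a finite limit $\theta(g)\in S$, which is the unique value such that $\Vert\psi_{t}(g)-u_{\theta(g)}\Vert_{2}\to 0$, and smoothness of $\theta$ follows from smoothness of $\psi_{t}$ in its initial datum combined with stable-foliation arguments for the normally hyperbolic manifold $\mathcal{U}$, in the spirit of \cite{AdamsMacLaurin2022arxiv}.

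\textbf{First derivative.} Differentiating $\theta(\psi_{t}(g))=\theta(g)$ in $g$ at $g=u_{\phi}$ and using $D\psi_{t}(u_{\phi})=e^{t\mathcal{L}_{\phi}}$ (the linearization of \eqref{eq:NFE_specific} at $u_{\phi}$) gives $D\theta(u_{\phi})[e^{t\mathcal{L}_{\phi}}h]=D\theta(u_{\phi})[h]$ for all $t\geq 0$. By Proposition \ref{prop:spect_L_phi}, $e^{t\mathcal{L}_{\phi}}h$ converges as $t\to\infty$ to the spectral projection of $h$ onto $\text{Ker}(\mathcal{L}_{\phi})=\text{Span}(v_{\phi})$; combined with the normalization $D\theta(u_{\phi})[v_{\phi}]=1$, obtained by differentiating $s\mapsto\theta(u_{\phi+s})=\phi+s$ at $s=0$ using $\partial_{s}u_{\phi+s}=v_{\phi+s}$, this yields \eqref{eq:diff_theta}.

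\textbf{Second derivative.} Differentiating the invariance twice at $g=u_{\phi}$ gives
\[
D^{2}\theta(u_{\phi})[e^{t\mathcal{L}_{\phi}}h,e^{t\mathcal{L}_{\phi}}l]+D\theta(u_{\phi})[K_{t}]=D^{2}\theta(u_{\phi})[h,l],
\]
where $K_{t}:=D^{2}\psi_{t}(u_{\phi})[h,l]$ satisfies the variational equation $\dot{K}_{t}=\mathcal{L}_{\phi}K_{t}+T(f''(u_{\phi})\cdot e^{t\mathcal{L}_{\phi}}h\cdot e^{t\mathcal{L}_{\phi}}l)$ with $K_{0}=0$, writing $T\psi(x):=\int_{S}\cos(x-y)\psi(y)\,dy$. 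Duhamel's formula, together with \eqref{eq:diff_theta}, the self-adjointness of $\mathcal{L}_{\phi}$ in $L^{2}_{\phi}$ and the identity $T_{\phi}v_{\phi}=v_{\phi}$, reduces $D\theta(u_{\phi})[K_{t}]$ to $\Vert v_{\phi}\Vert_{\phi}^{-2}\int_{0}^{t}\beta_{\phi}(e^{s\mathcal{L}_{\phi}}h,e^{s\mathcal{L}_{\phi}}l)\,ds$. Expanding $h$ and $l$ along the three eigenspaces $\mathcal{E}_{0}$, $\mathcal{E}_{\gamma}$, $\mathcal{E}_{-1}$ turns the integrand into a sum of exponentials $e^{s(\sigma+\tau)}$ with $\sigma,\tau\in\{0,\gamma,-1\}$; the sole constant term is proportional to $\beta_{\phi}(v_{\phi},v_{\phi})$, which vanishes by odd symmetry of $y\mapsto f''(A\cos y)\sin^{3}y$ on $S$, so the time integral converges and explicit integration produces the prefactors $-1/\gamma$, $1/(1-\gamma)$, $-1/(2\gamma)$, $1/2$. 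Simultaneously, the $t\to\infty$ limit of the left-hand term is $D^{2}\theta(u_{\phi})[P_{\phi}^{\circ}h,P_{\phi}^{\circ}l]$, computed by twice differentiating $s\mapsto\theta(u_{\phi+s})=\phi+s$. The normalizations $\Vert v_{\phi}\Vert_{\phi}^{2}=A^{2}$ and $\Vert u_{\phi}\Vert_{\phi}^{2}=A^{2}(1+\gamma)$ are read off from the eigenvalue equations $T_{\phi}v_{\phi}=v_{\phi}$ and $T_{\phi}u_{\phi}=(1+\gamma)u_{\phi}$; assembling the pieces yields \eqref{eq:diff2_theta}.

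The main obstacle is exactly this second-order computation: one must track all nine spectral cross-terms in the Duhamel integrand, verify the cancellation of the would-be non-integrable constant, justify the exchange of the $t\to\infty$ limit and the time integration via the decay \eqref{eq:contraction_Ps_phi} of Proposition \ref{prop:spect_L_phi}, and match the resulting algebraic prefactors with those appearing in \eqref{eq:diff2_theta}. Higher regularity of $\theta$ follows by iterating the same procedure at higher orders.
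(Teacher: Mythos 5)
Your strategy is essentially the one the paper itself uses: establish existence and regularity of the isochron, then obtain the derivatives by differentiating the invariance of $\theta$ under the flow, solving the second variation equation by Duhamel, projecting onto $v_\phi$ using self-adjointness, and integrating the resulting exponentials arising from the spectral decomposition of $\mathcal{L}_\phi$. The paper differentiates the limit map $\Gamma(g)=\lim_{t\to\infty}\psi_t(g)=u_{\theta(g)}$ rather than the conserved quantity $\theta\circ\psi_t=\theta$ directly, and establishes existence by a Cantor-intersection argument rather than your variational-phase estimate $|\dot\phi|\lesssim\|q\|_2^2$; these are only presentational differences. You correctly identify that the would-be non-integrable constant in the Duhamel integrand is $\beta_\phi(v_\phi,v_\phi)$, which does vanish by parity.

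There is, however, one point you should reconcile before claiming to recover \eqref{eq:diff_theta}. You correctly derive $D\theta(u_\phi)[v_\phi]=1$ by differentiating $s\mapsto\theta(u_{\phi+s})=\phi+s$. The genuine orthogonal spectral projection of $h$ onto $\mathrm{Ker}\,\mathcal{L}_\phi$ in $L^2_\phi$ is $\frac{\langle h,v_\phi\rangle_\phi}{\|v_\phi\|_\phi^2}\,v_\phi$, so your argument produces $D\theta(u_\phi)[h]=\frac{\langle h,v_\phi\rangle_\phi}{\|v_\phi\|_\phi^2}$, which is a factor of $\|v_\phi\|_\phi=A$ smaller than \eqref{eq:diff_theta} as printed; conversely, \eqref{eq:diff_theta} as printed gives $D\theta(u_\phi)[v_\phi]=A$, which contradicts your (correct) normalization. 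The root cause is that $P^\circ_\phi$ in \eqref{eq:def_proj_u'phi} is written with $\|v_\phi\|_\phi$ rather than $\|v_\phi\|_\phi^2$ in the denominator and is therefore not idempotent, so the identities $\lim_t e^{t\mathcal{L}_\phi}h=P^\circ_\phi h$ and ``$P^\circ_\phi$ is an orthogonal projection'' cannot both hold. The same normalization ambiguity propagates into the $\alpha^\circ_\phi$, $\alpha^\gamma_\phi$-dependent prefactors of \eqref{eq:diff2_theta} (and you can test it there too: plugging $h=l=v_\phi$ into \eqref{eq:diff2_theta} as written does not give zero, whereas your normalization forces $D^2\theta(u_\phi)[v_\phi,v_\phi]=0$). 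So when you say your computation ``yields \eqref{eq:diff_theta}'' you should either carry through with your normalization — in which case you will land on $\|v_\phi\|_\phi^2$, not $\|v_\phi\|_\phi$, in the denominator — or adopt the paper's non-idempotent $P^\circ_\phi$ consistently throughout, but you cannot freely mix the two conventions as the present write-up implicitly does.
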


Note that in particular, as $u_{\theta(g)}\in \mathcal{U}$ and $\mathcal{U}$ consists in stationary points, $\psi_t(u_{\theta(g)})=u_{\theta(g)}$. Propositions \ref{prop:phase_proj} and \ref{prop:ex_isochron} are proved in Section \ref{S:proof_proj}.

\subsubsection{Long time behavior}

The first result uses the variational phase to ensure that $(U_N(t))$ defined in \eqref{eq:def_UN} reaches a neighborhood of $\mathcal{U}$ in time of order $\log(N)$ and stays inside it for arbitrary polynomial times in $N$.

\begin{thm}
\label{thm:UN_close_U}
Suppose that $\rho\in B(\mathcal{U},\varepsilon_0)$ and
\begin{equation}
\label{eq:hyp_init}
\Vert U_N(0)-\rho\Vert_{2} \xrightarrow[N\to\infty]{}0.
\end{equation} 
Let $\alpha,\tau_f>0$. There exists some $C>0$ such that, defining for any $N\geq 1$, $T_0(N):=C\log(N)$, for any $\varepsilon>0$,
\begin{equation}\label{eq:thm_UNU}
\mathbf{P}\left(\sup_{t\in [T_0(N),N^{\alpha} \tau_f]}\textnormal{dist}_{L^2}\left( U_N(t),\mathcal{U}\right) \leq \varepsilon \right) \xrightarrow[N\to\infty]{}1.
\end{equation}
\end{thm}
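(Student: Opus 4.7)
I would split the argument into two stages separated by the timescale. On the initial window $[0,T_0(N)]$ with $T_0(N)=C\log N$, the strategy is to combine standard propagation of chaos with the asymptotic stability of $\mathcal{U}$ (Corollary \ref{cor:U_stable}) to drive the process into a small neighbourhood of the manifold. On the long window $[T_0(N),N^\alpha \tau_f]$, the strategy is a bootstrap based on the spectral decomposition around $\mathcal{U}$ provided by Proposition \ref{prop:spect_L_phi}, combined with a careful martingale estimate for Hawkes fluctuations.

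\textbf{Initial stage.} Since $\rho\in B(\mathcal{U},\varepsilon_0)$, Corollary \ref{cor:U_stable} gives $\textnormal{dist}_{L^2}(\psi_t(\rho),\mathcal{U})\to 0$, which a linearisation around $\mathcal{U}$ using \eqref{eq:contraction_Ps_phi} can be upgraded to a quantitative exponential rate $\textnormal{dist}_{L^2}(\psi_t(\rho),\mathcal{U})\leq K e^{-|\gamma| t}$. Standard propagation-of-chaos estimates of the form \eqref{eq:chaos_generic} couple $U_N$ to $\psi_\cdot(\rho)$ with an error $C' e^{C't}/\sqrt N$ on any bounded window (uniformly in the spatial variable, after averaging over the $B_{N,i}$), and \eqref{eq:hyp_init} handles the initial condition. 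Taking $C$ small enough that $N^{C'C-1/2}\to 0$ but still $e^{-|\gamma| C \log N}\to 0$ (both compatible as $\varepsilon$ is fixed and $N\to\infty$), one concludes $\textnormal{dist}_{L^2}(U_N(T_0(N)),\mathcal{U})\to 0$ in probability.

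\textbf{Long-time bootstrap.} Fix $\varepsilon\in(0,\varpi\wedge\varepsilon_0)$ and let
\begin{equation*}
\tau_N := \inf\bigl\{t\geq T_0(N) : \textnormal{dist}_{L^2}(U_N(t),\mathcal{U})>\varepsilon\bigr\}\wedge N^\alpha\tau_f.
\end{equation*}
On $\{\tau_N>T_0(N)\}$ Proposition \ref{prop:phase_proj} defines the variational phase $\Phi_N(t):=\textnormal{proj}_\mathcal{U}(U_N(t))$, yielding the decomposition $U_N(t)=u_{\Phi_N(t)}+R_N(t)$ with $P_{\Phi_N(t)}^\circ R_N(t)=0$. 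Writing \eqref{eq:def_UN} in Itô form with respect to the Poisson measures $\pi_i$ of Definition \ref{def:H}, one gets schematically
\begin{equation*}
dU_N(t) = \mathcal{L}_{\Phi_N(t)} R_N(t)\,dt + \mathcal{N}(R_N(t))\,dt + dM_N(t),
\end{equation*}
where $\mathcal{N}(R_N)=O(\|R_N\|^2)$ is the nonlinear remainder arising from Taylor expansion of $f_{\kappa,\varrho}$ around $u_{\Phi_N}$, and $M_N$ is a martingale whose quadratic variation is of order $1/N$ per unit time (thanks to the $\frac{2\pi}{N}$ prefactor in \eqref{eq:def_lambdaN_specific} and the uniform bound $f_{\kappa,\varrho}\leq 1$). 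Differentiating the orthogonality constraint yields a drift for $\Phi_N$ of order $\|R_N\|^2$ plus a martingale part, while projecting the equation for $R_N$ by $P_{\Phi_N}^\perp$ removes the neutral direction and allows the spectral gap \eqref{eq:contraction_Ps_phi} to act. A Duhamel representation then gives, for $t\in[T_0(N),\tau_N]$,
\begin{equation*}
\|R_N(t)\|_{2,\Phi_N(t)} \leq C_P\|R_N(T_0(N))\| + C\!\int_{T_0(N)}^t e^{-|\gamma|(t-s)}\|R_N(s)\|^2\,ds + \sup_{T_0(N)\leq s\leq t}\|\mathcal{M}_N(s)\|,
\end{equation*}
where $\mathcal{M}_N$ is the stochastic convolution of $e^{(t-s)\mathcal{L}_{\Phi_N}}P_{\Phi_N}^\perp$ against $dM_N$. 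A Gronwall-type argument, together with the bootstrap assumption $\|R_N\|\leq\varepsilon$, then reduces the problem to a uniform control of $\sup_s\|\mathcal{M}_N(s)\|$.

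\textbf{Main obstacle.} The central technical issue is controlling this stochastic term uniformly on a polynomial window. A naive BDG bound over the whole interval gives a blow-up in $N^\alpha$ that cannot be absorbed by $1/\sqrt N$. The idea is to exploit that, thanks to the contractivity $e^{t\mathcal{L}_{\Phi}}P_\Phi^\perp\sim e^{-|\gamma|t}$, the value of $\mathcal{M}_N(t)$ only depends effectively on the noise accumulated over a window of length $O(\log N)$ in the past. One slices $[T_0(N),N^\alpha\tau_f]$ into $O(N^\alpha)$ sub-intervals of length $O(\log N)$, applies an exponential martingale inequality on each slice (Hawkes intensities are bounded by $1$, so sub-Gaussian concentration at scale $\textnormal{polylog}(N)/\sqrt N$ is available), and then unions bound over the $N^\alpha$ pieces to obtain $\sup_{T_0(N)\leq s\leq N^\alpha\tau_f}\|\mathcal{M}_N(s)\|\leq N^{-1/2+\delta}$ with probability $1-o(1)$ for arbitrarily small $\delta>0$. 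Closing the bootstrap (the quadratic remainder is absorbed thanks to $\|R_N\|\leq\varepsilon$) and handling the drift in $\Phi_N$ (which does not affect the $L^2$-distance to $\mathcal{U}$) shows $\tau_N=N^\alpha\tau_f$ with probability tending to one, yielding \eqref{eq:thm_UNU}. The delicate point is calibrating the martingale concentration finely enough that the union bound over $N^\alpha$ slices still tends to $1$; a secondary subtlety is that the non-constant phase $\Phi_N$ makes the operator $\mathcal{L}_{\Phi_N}$ time-dependent, which forces one to compare $e^{(t-s)\mathcal{L}_{\Phi_N(t)}}$ to $e^{(t-s)\mathcal{L}_{\Phi_N(s)}}$ using the smooth dependence of the spectral projectors in Proposition \ref{prop:spect_L_phi}.
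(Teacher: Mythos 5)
Your overall architecture matches the paper's: reach a small neighbourhood of $\mathcal{U}$ in time $O(\log N)$, then run a bootstrap driven by the spectral gap of $\mathcal{L}_\phi$, with the noise controlled uniformly over polynomially many time slices by a union bound. The two places where you genuinely diverge are worth comparing. First, you linearise around the \emph{continuously varying} variational phase $\Phi_N(t)=\textnormal{proj}_{\mathcal{U}}(U_N(t))$, so your Duhamel formula involves the non-autonomous family $\mathcal{L}_{\Phi_N(s)}$; the paper instead freezes the phase at the left endpoint of each subinterval of \emph{fixed} length $T$ (chosen so that $C_PC_{\mathcal{L}}e^{T\gamma}\leq 1/4$), works with the single autonomous semigroup $e^{t\mathcal{L}_{\phi_{n-1}}}$ on that interval, and only updates the phase between intervals, where the smoothness of $\textnormal{proj}_{\mathcal{U}}$ and of $\phi\mapsto P_\phi^\perp$ give an $O(\|V_n\|^2)$ correction. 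This is not a cosmetic difference: over the full window $[T_0(N),N^\alpha\tau_f]$ the phase moves by $O(1)$ (that is the content of Theorem \ref{thm:theta_fluc}), so a global comparison of $e^{(t-s)\mathcal{L}_{\Phi_N(t)}}$ with $e^{(t-s)\mathcal{L}_{\Phi_N(s)}}$ is unavailable; your fix via the $O(\log N)$ effective memory of the stochastic convolution can work, but it requires a two-parameter evolution-family argument (plus an It\^o formula for the implicitly defined, jump-discontinuous $\Phi_N$) that you flag only as a "secondary subtlety" and do not carry out. The paper's discretisation is designed precisely to make this issue disappear. Second, for the noise you invoke an exponential martingale inequality per slice; the paper instead bounds $\mathbf{E}\bigl[\sup_t\Vert\zeta_n(t)\Vert_2^{2m}\bigr]\leq CN^{-m}$ via It\^o's formula for $\Vert\cdot\Vert_2^{2m}$ and Burkholder--Davis--Gundy, and takes $m$ large enough that $n_fN^{-2m\eta}=O(N^{\alpha-2m\eta})\to 0$. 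Both routes are viable here since the jumps of $M_N$ have size $O(1/N)$ and the intensities are bounded; the moment route is the one actually implemented and is arguably more elementary. Finally, note that your scheme stabilises the distance only at level $\varepsilon$, which suffices for \eqref{eq:thm_UNU} but not for the sharper bound $CN^{\eta-1/2}$ of Remark \ref{rem:speed_thm_closeU} that the paper extracts from the same induction and later needs for Theorem \ref{thm:theta_fluc}; the paper gets the sharper rate because the fixed point of the per-interval map $x\mapsto\frac{1}{4}x+O(N^{\eta-1/2})$ is itself $O(N^{\eta-1/2})$.
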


\begin{remark}
\label{rem:speed_thm_closeU}
In fact, we show a more precise result than \eqref{eq:thm_UNU} that will be useful for the proof of Theorem \ref{thm:theta_fluc}: we prove that for any fixed $\eta\in (0,\frac{1}{4})$, we have with some constant $C>0$
$$\mathbf{P}\left(\sup_{t\in [T_0(N),N^{\alpha} \tau_f]}\textnormal{dist}_{L^2}\left( U_N(t),\mathcal{U}\right) \leq  CN^{\eta-1/2} \right) \xrightarrow[N\to\infty]{}1.
$$
\end{remark}

Theorem \ref{thm:UN_close_U} is proved in Section \ref{S:long_time}. The second main result of the paper is the analysis of the behavior of $U_N$ along $\mathcal{U}$ when $\alpha=1$.

\begin{thm}
\label{thm:theta_fluc}
Let $\rho\in B(\mathcal{U},\varepsilon_0)$. Suppose \ref{eq:hyp_init}. Let $\tau_f>0$. There exist a deterministic $\theta_0\in S$ and for every $N$ some $\tau_0(N)\propto \dfrac{\log(N)}{N}$ and a càdlàg process $\left(W_N(t)\right)_{t\in (\tau_0(N),\tau_f)}$ that converges weakly in $\mathbb{D}\left([0,\tau_f],S\right)$ towards a standard Brownian such that for every $\varepsilon>0$,
\begin{equation}
\label{eq:theta_fluc}
\lim_{N\to\infty}\mathbf{P}\left(\sup_{\tau\in (\tau_0(N),\tau_f)} \left\Vert U_N(N\tau) - u_{\theta_0 + \sigma W_N(\tau)}\right\Vert_{2}\leq \varepsilon\right)=1,
\end{equation}
where
\begin{equation}\label{eq:def_sigma}
\sigma:=\left( 2\pi \int_S \sin^2(x)f(A\cos(x))dx\right)^{\frac{1}{2}},
\end{equation}
with $A=A(\kappa)$ defined with Proposition \ref{prop:stat_sol_sig}.
\end{thm}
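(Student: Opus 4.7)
The plan is to use the isochronal map $\theta$ from Proposition \ref{prop:ex_isochron} to reduce the dynamics of $U_N$ to a one-dimensional phase on $\mathcal{U}$, and then establish a functional CLT for this phase on the time scale $N\tau$. First I invoke Theorem \ref{thm:UN_close_U} together with Remark \ref{rem:speed_thm_closeU} to fix a high-probability event on which $U_N(t)\in B(\mathcal{U},\varepsilon_0)$ and $\mathrm{dist}_{L^2}(U_N(t),\mathcal{U})=O(N^{\eta-1/2})$ for all $t\in[T_0(N),N\tau_f]$, with $\eta\in(0,1/4)$ small. On this event $\theta_N(t):=\theta(U_N(t))$ is well defined and $C^\infty$ in $U_N$, and I set $\theta_0:=\theta(\rho)$ and $\tau_0(N):=T_0(N)/N\propto\log(N)/N$.

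Next I apply It\^o's formula for pure-jump semi-martingales to $\theta(U_N(t))$ and Taylor expand each jump (of $L^2$-size $O(N^{-1/2})$) to third order, the cubic remainder being summable to $O(N^{-1/2})$ over $[0,N\tau_f]$. Compensating the jumps yields
\begin{align*}
\theta_N(t)-\theta_N(T_0(N)) &= \int_{T_0(N)}^{t} D\theta(U_N(s))[F_N(U_N(s))]\, ds \\
&\quad + \tfrac{1}{2}\int_{T_0(N)}^{t} \sum_{j=1}^{N}\lambda_{N,j}(s)\, D^2\theta(U_N(s))[h_{j},h_{j}]\, ds + M_N(t),
\end{align*}
where $h_j(x):=\tfrac{2\pi}{N}\cos(x_i-x_j)$ for $x\in B_{N,i}$, $F_N$ is the Riemann-sum discretization of the NFE vector field $F(u)(x):=-u(x)+\int_S\cos(x-y)f(u(y))\,dy$, and $M_N$ is a martingale. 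The leading drift vanishes by isochron invariance: differentiating $\theta(\psi_t(g))=\theta(g)$ at $t=0$ gives $D\theta(g)[F(g)]\equiv 0$ on $B(\mathcal{U},\varepsilon_0)$, and the residual $D\theta(U_N(s))[F_N(U_N(s))-F(U_N(s))]$ is controlled by expanding around $u_{\theta_N(s)}$, combining the spectral accuracy of trapezoidal quadrature for smooth periodic integrands with the tubular bound $\|U_N(s)-u_{\theta_N(s)}\|_2=O(N^{\eta-1/2})$.

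I then compute $\langle M_N\rangle$. Using formula \eqref{eq:diff_theta} for $D\theta$, the eigenvector identity $T_\phi v_\phi=v_\phi$ from Proposition \ref{prop:spect_L_phi} (which implies $\langle v_\phi,\cos(\cdot-y)\rangle_\phi=v_\phi(y)$), and the relation $\int_S\sin^2(w)f'(A\cos w)\,dw=1$ obtained by integrating by parts in the fixed-point equation \eqref{eq:A_self}, the predictable quadratic variation rate of $M_N$ reduces to $\sigma^2/N$ with $\sigma$ as in \eqref{eq:def_sigma}. A functional central limit theorem for jump martingales (Rebolledo, or Jacod--Shiryaev) then yields that $W_N(\tau):=(\theta_N(N\tau)-\theta_0)/\sigma$ converges weakly in $\mathbb{D}([0,\tau_f],S)$ to a standard Brownian motion. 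The claimed closeness \eqref{eq:theta_fluc} then follows from $U_N(N\tau)=u_{\theta_N(N\tau)}+O(N^{\eta-1/2})$ and continuity of $\phi\mapsto u_\phi$.

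The hard part is the second-order It\^o drift, whose rate is of order $1/N$ and hence contributes $O(\tau)$ over the time horizon $N\tau$: it cannot be discarded on size alone. Substituting $h=l=\cos(\cdot-y)$ into \eqref{eq:diff2_theta} and integrating against $f(u_\phi(y))\,dy$, the three $\beta_\phi$-pieces yield integrands proportional to $\sin(2(\phi+y))$, which vanish under the translation $w=y+\phi$ against the even weight $f(A\cos w)\,dw$. The remaining $(\alpha_\phi^\circ)^2+(\alpha_\phi^\gamma)^2$ contribution must then be paired with the analogous $O(1/N)$ corrections coming from the Riemann-sum difference $F_N-F$ and from the replacement $U_N(s)\mapsto u_{\theta_N(s)}$ in the leading martingale variance, and I expect these to cancel using the translation symmetry of $\mathcal{U}$. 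A secondary difficulty is obtaining uniform-in-$t$ control of all Taylor-remainder and discretization errors on $[T_0(N),N\tau_f]$, which I would handle through Doob-type maximal inequalities for the compensated Poisson measures combined with the refined confinement provided by Theorem \ref{thm:UN_close_U}.
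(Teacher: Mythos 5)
Your overall architecture coincides with the paper's: confinement from Theorem \ref{thm:UN_close_U}, reduction to the isochronal phase $\theta(U_N(t))$, vanishing of the leading drift by the invariance $D\theta(g)[-g+\cos\ast f(g)]=0$, the quadratic-variation computation via $T_\phi v_\phi=v_\phi$ yielding $\sigma^2/N$, and a jump-martingale FCLT (the paper uses Aldous tightness plus L\'evy's characterization, which is equivalent to your Rebolledo route). The $\theta_0=\theta(\rho)$ and $\tau_0(N)=T_0(N)/N$ identifications also match.

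The genuine gap is exactly where you flag "the hard part": you do not actually establish that the total $O(1/N)$ drift vanishes, and the cancellation you conjecture is not the one that occurs. First, the quadrature residual cannot be dismissed by spectral accuracy of periodic trapezoidal rules: because the interaction is $\sum_i \mathbf{1}_{B_{N,i}}(x)\cos(x_i-x_j)$ and $U_N$ itself is piecewise constant on the cells $B_{N,i}$, the cell-wise replacement $x\mapsto x_i$ produces a systematic bias driven by $\int_{B_{N,i}}(y-x_i)\,dy=-2\pi^2/N^2\neq 0$. Projected on $v_{\theta_N}$ this gives a nonzero drift $\langle v_{\theta_N},\tilde\Upsilon_s\rangle_{\theta_N}=\pi A^2/N+o(1/N)$ (equation \eqref{eq:Upsilon_v}), which in the paper is cancelled \emph{internally within the discretization term} by the matching bias $\langle v_{\theta_N},\Upsilon_s-\tilde\Upsilon_s\rangle_{\theta_N}=-\pi A^2/N+o(1/N)$ coming from discretizing $f(U_N)-f(u_{\theta_N})$ (equation \eqref{eq:Upsilon_Upsilon_tilde}, Lemma \ref{lem:I2}). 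Second, the second-order It\^o term is treated by the paper on its own: Lemma \ref{lem:computations_Dtheta2_chi} computes $D^2\theta(u_\phi)[\chi_j(s,z)]^2$ explicitly from \eqref{eq:diff2_theta} and finds that its leading $N^{-2}$ part is proportional to $\cos(x_j+\phi)\sin(x_j+\phi)$, so that the sum over $j$ against $f(u_{\theta_N}(x_j))$ is a Riemann sum of the odd integrand $\cos\sin f(A\cos)$ and vanishes (Lemma \ref{lem:I4}) --- no pairing with the quadrature error or with a correction to the martingale variance is invoked. Your proposal leaves the fate of the $(\alpha_\phi^\circ)^2+(\alpha_\phi^\gamma)^2$ contribution as "I expect these to cancel using the translation symmetry", which is precisely the statement the theorem requires you to prove: rotation invariance shows the limiting drift is constant in $\theta$, not that it is zero, so each candidate $O(1/N)$ constant must be computed and the sum shown to vanish. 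Until that bookkeeping is done the argument does not rule out a deterministic drift of order one on the time scale $N\tau$. A minor slip: the jumps of $U_N$ have $L^2$-size $O(N^{-1})$ (see \eqref{eq:norme_chi}), not $O(N^{-1/2})$; your subsequent orders of magnitude are consistent with $O(N^{-1})$, so this is only a typo, but with $O(N^{-1/2})$ jumps the quadratic variation would diverge on the time scale $N\tau$.
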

Theorem \ref{thm:theta_fluc} is proved in Section \ref{S:fluct}. We have run several simulations to illustrate our results, seen in Figure \ref{fig:wandering1}. We represent the evolution of the current $U_N(t,x)$ for $t\in [0,T_{\text{max}}]$ where the time is on the x-axis and spatial position on the y-axis. The different values taken are scaled with a color bar. We can see the \emph{wandering bumps} evolving in Figure \ref{subfig:wandering1}, whereas in Figure \ref{subfig:wandering3} the initialization is too far from the manifold and the system is no longer attracted to $\mathcal{U}$. 

\begin{figure}
\centering
\subfloat[The initialization in the vicinity of $\mathcal{U}$ leads to wandering bumps]{\includegraphics[width=0.85\textwidth]{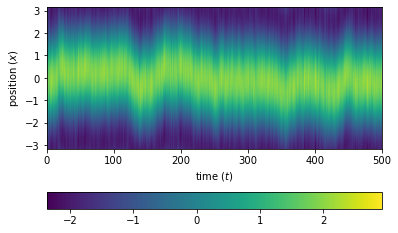}\label{subfig:wandering1}}
\\
\subfloat[The initialization far from the vicinity of $\mathcal{U}$ does not trigger the wandering bumps]{\includegraphics[width=0.85\textwidth]{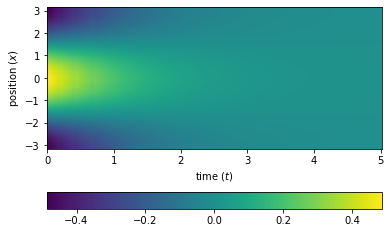}
\label{subfig:wandering3}}
\caption{Evolution of the voltage $U_N(t)(x)$}
\label{fig:wandering1}
\fnote{We chose $(\kappa,\varrho)=\left(\frac{1}{20},\frac{1}{2}\right)$ and run simulations of $N=500$ neurons following \eqref{eq:def_lambdaN_specific}. We represent the evolution of the current $U_N(t,x_i)$ obtained for two different simulations where we changed the initial profile $\rho$. In \ref{subfig:wandering1}, we start in a vicinity of $\mathcal{U}$ as we take for initialization $\rho(x)=A(\kappa)\cos(x) +\cos(2x)$, where $A(\kappa)$ solving \eqref{eq:A_self} for $f=f_{\kappa,\varrho}$ is found by a numerical root finding method, with a final time $T_{\text{max}}=500$ (of the same order that the size of the population). In \ref{subfig:wandering3}, we initialize the system with $\rho(x)=\frac{1}{4} A(\kappa)\cos(x)$. It is too far from the manifold $\mathcal{U}$ and we can see that the dynamics is attracted to $\mathcal{U}_A$ where $A$  is the smallest solution of \eqref{eq:A_self} (in Figure \ref{fig:fixed} it corresponds to the far left intersection of the black and blue lines) which is approximately 0, hence we only run the simulation with a final time $T_{\text{max}}=5$.}
\end{figure}

\subsection{Link with the literature}\label{S:litterature}


Hawkes processes have been introduced in \cite{HAWKES1971} to model earthquakes and have been thoroughly studied since, see e.g. \cite{bremaud1996stability}. The seminal work of \cite{delattre2016} has renewed the interest for large population of interacting Hawkes processes, which have proven to be particularly useful in a neuroscience context to model the mutually exciting properties of a population of neurons, see for instance \cite{Ditlevsen2017,CHEVALLIER20191}. 

In this respect, a common setting for the modelling of interacting neurons is the mean-field framework. For instance, in \cite{Baladron2012}, the authors describe the propagation of chaos in networks of Hodgkin-Huxley and FitzHugh-Nagumo neurons. Another popular model is the integrate-and-fire dynamics, first introduced in the seminal work of Lapique \cite{lapique1907}, and still studied mathematically, as e.g. in \cite{DeMasi2014, Delarue2015} and also \cite{Cormier2020}.


Several works have extended the mean-field framework to take into account the presence of a macroscopic spatial structure in the interaction, originally for diffusion models (see \cite{Touboul2014,Luon2016}), as well as for Hawkes processes (see \cite{Ditlevsen2017,CHEVALLIER20191}). The main difficulty with this extension is that we lose the exchangeability specific to homogeneous mean-field models as in \cite{Sznitman1989,delattre2016}. Concerning our present model, \cite{CHEVALLIER20191}  was the first to provide with a rigorous mesoscopic interpretation of the neural field equation \eqref{eq:NFE_gen} in terms of the limit of spatially extended Hawkes processes interacting through a mesoscopic spatial kernel. The recent work \cite{agathenerine2021multivariate} extend this result for Hawkes processes interacting on inhomogeneous random graphs.  Another possiblity to circumvent the exchangeability issue would have been to use replica mean-field models as \cite{davydov2022propagation} and describe the propagation of chaos for an infinite number of replicas. Note however that this description keeps the size $N$ of the population fixed, whereas we want to have $N\to\infty$.

Note also that the present model include interaction that may be negative: this reflects some inhibitive effect among neurons with opposite orientations. Modelling the inhibition present in the brain has been historically difficult. For Hawkes processes, a common approach is to allow the synaptic kernel $h$ in \eqref{eq:def_lambda_generic} to take negative values. This is however impossible for linear Hawkes processes as the intensity cannot be negative. To circumvent this, one has to choose a non-negative and nonlinear function $f$ to preserve the non-negativity of the intensity. A classic choice is to take $f(x)=\max\left(0,\mu+x\right)$ (see for instance \cite{bonnet_21} for estimation model or \cite{Costa2020,CCC2022} with $h$ in \eqref{eq:def_lambda_generic} signed and with compact support). One can also introduce inhibition through a signed multiplying factor (that may depend or not on the neuron), see for instance \cite{Duarte2016,Ditlevsen2017,Pfaffelhuber2022}. Some works have also parted the whole population into two subclasses of neurons, the excitatory ones and the inhibitory ones  \cite{raad2020stability,duval_lucon_pouzat2022}. In the latter, the inhibition is made thanks to a (small) multiplicative factor onto the intensity of the excitatory population. The present work is another contribution concerning models with inhibition, as it is present thanks to the cosine interaction kernel that takes negative values. This choice is essential to our dynamics as the balance between excitation and inhibition within the population of neurons allows to have a stable manifold of stationary solutions to \eqref{eq:NFE_specific}. \\ 


The analysis of mean-field interacting processes on long time scales has a significant history in the case of interacting diffusions, in particular in the case of phase oscillators as the Kuramoto model \cite{kuramoto75} (see \cite{giacomin_poquet2015} and references therein for a comprehensive review on the subject). The techniques used in the present work have some formal similarities to the ones used for diffusions, the main difference being that with Hawkes processes, the noise is Poissonnian (rather Brownian) and multiplicative (rather than additive). The so-called uniform propagation of chaos concerns situations where estimates such as \eqref{eq:chaos_generic} are uniform in time. Such estimates are commonly met in reversible situations (e.g. granular type media diffusions \cite{Bolley:2013}). See also the recent paper of \cite{Colombani2022}, where the authors studies a uniform propagation of chaos on the FitzHugh-Nagumo diffusive model. 

Let us comment on the analysis of the Kuramoto model as it presents some informal proximity with our model. One is here interested in the longtime behavior of the empirical measure $ \mu_{ N, t}:= \frac{ 1}{ N} \sum_{ i=1}^{ N} \delta_{ \theta_{ i, t}}$ of the system of interacting diffusions $(\theta_{ 1}, \ldots, \theta_{ N})$ solving the system of coupled SDEs 
$$ d\theta_{ i,t}= - \frac{ K}{ N} \sum_{ j=1}^{ N} \sin( \theta_{ i,t}- \theta_{ j,t})d t + dB_{ i, t},$$
with $(B_i)$ i.i.d. Brownian motions. Standard propagation of chaos techniques show that $ \mu_{N}$ converges weakly on a bounded time interval $[0, T]$ to the solution $ \mu_{ t}$ to the nonlinear Fokker-Planck (NFP) equation 
\begin{equation}
\label{eq:NFP}
\partial_t \mu_t\, =\, \frac{1}{2} \partial_{ \theta}^{ 2} \mu_t+K\partial_\theta \Big( \mu_t(\sin * \mu_t)\Big),
\end{equation}
(to compare with our microscopic current $U_{N,i}$ in \eqref{eq:def_UiN} converging towards $u_t$ solution to the NFE \eqref{eq:NFE_specific}). One can easily prove the existence of a phase transition for \eqref{eq:NFP}: when $K\leq 1$, $ \mu\equiv \frac{ 1}{ 2\pi}$ is the only (stable) stationary point of \eqref{eq:NFP} (subcritical case), whereas it coexists with a stable circle of synchronised profiles when $K>1$ (supercritical case). A series of papers have analysed the longtime behavior of the empirical measure $\mu_N$ of the Kuramoto model (and extensions) in both the subcritical and supercritical cases, the first one being \cite{bertini14}, followed by \cite{giacomin2012,lucon_poquet2017,Coppini2022,Delarue2021}. The main arguments of the mentioned papers lie in a careful analysis of two contradictory phenomena that arise on a long-time scale: the stability of the deterministic dynamics around stationary points (that forces $ \mu_{ N}$ to remain in a small neighborhood of these points) and the presence of noise in the microscopic system (which makes $ \mu_{ N}$ diffuse around these points). 

We are here in a similar situation to the supercritical case: the deterministic dynamics of the spatial profile $U_{ N}$ (given by \eqref{eq:def_UN}) has a stationary manifold $\mathcal{U}$ (defined in \eqref{eq:def_U_circle}) which possesses sufficient stability properties, see Corollary \ref{cor:U_stable}. The point of the analysis relies then on a time discretization and some careful control on the diffusive influence of noise that competes with the deterministic dynamics. In a previous work \cite{agathenerine_longtime_arxiv}, we have analysed in depth the case where \eqref{eq:def_ell_previous} has a unique solution, that would be comparable to the subcritical case of the Kuramoto model.


The first main result of the paper is to show that once $U_N(0)$ is close to the stationary manifold $\mathcal{U}$, it stays so for a long time, see Theorem \ref{thm:UN_close_U}. The next step is to find a way to describe the projection of the dynamics onto $\mathcal{U}$. A convenient tool for this is the use of isochronicity, we refer to \cite{guckenheimer1974} for a precise approach on the subject, and to  \cite{Giacomin2018} for their use of isochronicity to study the proximity between the noisy trajectory of interacting particles and the limit cycle in a finite dimensional setting. See also \cite{lucon-poquet2021} where the microscopic system is a diffusion and the large population limit admits a stable periodic solution: they show that the empirical measure stays close to the periodic solution with a random dephasing. The isochron map in this case helps to describe the dephasing as a Brownian motion with a constant drift. \\


Going back to Hawkes processes, several other works have already complemented the propagation of chaos result mentioned in \eqref{eq:chaos_generic} and studied finite approximations of the NFE, mostly at the level of fluctuations. Central Limit Theorems (CLT) have been obtained in \cite{delattre2016,Ditlevsen2017} for homogeneous mean-field Hawkes processes (when both time and $N$ go to infinity) or with age-dependence in \cite{Chevallier2017}. One should also mention the functional fluctuation result recently obtained in \cite{Heesen2021}, also in a pure mean-field setting. A result closer to our case with spatial extension is \cite{ChevallierOst2020}, where a functional CLT is obtained for the spatial profile $U_{ N}$ around its limit. Note here that all of these works provide approximation results of quantities such that $ \lambda_{ N}$ or $U_{ N}$ that are either valid on a bounded time interval $[0, T]$ or under strict growth condition on $T$ (see in particular the condition $ \frac{ T}{ N} \to 0$ for the CLT in \cite{Ditlevsen2017}), whereas we are here concerned with time-scales that grow polynomially with $N$. 

Another alternative to study large time behavior is to use a Brownian approximation of the dynamics of $U_{ N}$, see the initial work of \cite{Ditlevsen2017}. However this approximation is based on the comparison of the corresponding semigroups and is not uniform in time. Nevertheless, let us comment on this diffusive approximation in large population regime on bounded time intervals that can be found in both \cite{Ditlevsen2017,ChevallierOst2020}. A second order approximation of the NFE was proposed in \cite{ChevallierOst2020} with (adapted to the notations of the present article)
\begin{equation}
\label{eq:diff_approx_litt}
dU_N(t)=-U_N(t)dt + w\ast f(U_N(t))dt + C\int_S w(x,y) \frac{\sqrt{f(U_N(t)(y))}}{\sqrt{N}}W(dt,dy),
\end{equation}
where $W$ is a Gaussian white noise. This approximating diffusion process \eqref{eq:diff_approx_litt} is a noisy NFE, it can be seen as an intermediate modeling between the microscopic scale given by the Hawkes process and the macroscopic scale given by the NFE. In our framework with a cosine kernel, the infinitesimal increment of the noise in \eqref{eq:diff_approx_litt} can be expanded as
$$ C \cos(x) \int_S \cos(y) \frac{\sqrt{f(U_N(t)(y))}}{\sqrt{N}}W(dt,dy) + C \sin(x) \int_S \sin(y) \frac{\sqrt{f(U_N(t)(y))}}{\sqrt{N}}W(dt,dy).$$
To compare with our result, let us informally project the last quantity on $\text{Ker}(\mathcal{L}_0)$ introduced in Proposition \ref{prop:spect_L_phi}. The scalar product $\langle \cdot, v_0\rangle_{2,0}$ with $v_0=-A\sin(\cdot)$ gives that the cosine term becomes zero and the noise left is a random variable of the form
$$ -CA\int_S \sin^2 f'(A\cos) \int_S \sin (y) \frac{\sqrt{f(U_N(t)(y))}}{\sqrt{N}}W(dt,dy)=-C\int_S \sin (y) \frac{\sqrt{f(U_N(t)(y))}}{\sqrt{N}}W(dt,dy)$$
using \eqref{eq:A_kappa&Isin}. The infinitesimal noise that effectively drives the dynamics of \eqref{eq:diff_approx_litt} along $\mathcal{U}$ is then Gaussian with variance proportional to
$$\int_S \sin^2 (y) \frac{f(U_N(t)(y))}{N}dydt$$
which is exactly the variance found in \eqref{eq:def_sigma}, rescaled by $\frac{1}{N}$ and where $U_N(t)$ has been replaced by the limit $u_t$. This analogy remains informal, but shows that our results are compatible to the computations of \cite{Ditlevsen2017} and \cite{ChevallierOst2020}: one could see the present result as a rigorous justification that the approximation introduced by \cite{Ditlevsen2017,ChevallierOst2020} can be extended for polynomial times in $N$. 

Approximation between Hawkes and Brownian dynamics has also been studied in \cite{chevallierMT2021,erny2023annealed}, based on Koml\'os, Major and Tusn\'ady (KMT) coupling techniques (see \cite{ethier_kurtz1986}). Recently, Prodhomme \cite{Prodhomme_2023} used similar KMT coupling techniques applied to finite dimensional Markov chains and found Gaussian approximation to remain precise for very large periods of time. However these results are valid for $\mathbb{Z}^d$-valued continous-time Markov chains, it is unclear how they can be applied in our situation (with infinite dimension and space extension).  The proof we propose is direct and does not rely on such Brownian coupling. 

The question of Stochastic Neural Field Equations has also been considered directly from a macroscopic perspective at multiple times. It consists in considering the NFE \eqref{eq:NFE_gen} with an additive or multiplicative spatio-temporal noise, see for instance \cite{bressloff_webber_12_front,kruger_stannat_14_front}. Existence and uniqueness results have been obtained for various expressions of the noise, see \cite{faugeras_inglis_SNFE2015,inglis_mclaurin_16}. Let us mention in particular \cite{kilpatrick2013,MacLaurinBressloff2020,kilpatrick2022arxiv} who propose a heuristical derivation of the diffusion coefficient of the wandering bumps in a setting similar to ours (the ring model with $f$ the Heaviside function). See also \cite{maclaurin_2023} where the author studies the effect of the added noise on patterns such that traveling waves and oscillations thanks to the use of some projection of the dynamics, to obtain long time stability. Whereas all of the previous results are concerned with a macroscopic approach concerning stochastic perturbation of the NFE, we provide here a rigorous and microscopic interpretation of this phenomenon.

\subsection{Strategy of proof of the long time behavior} \label{S:strategy}
\subsubsection{About Theorem \ref{thm:UN_close_U}}

Section \ref{S:long_time} is devoted to prove the proximity result of Theorem \ref{thm:UN_close_U}. This in particular requires some spectral estimates on the operators $\mathcal{L}_\phi$ introduced in Definition \ref{def:ops_phi} and the stability of stationary solutions to \eqref{eq:NFE_specific}, results that are gathered in Section \ref{S:stationary_result} and proved in Section \ref{S:proof_stat}. The main lines of proof for Theorem \ref{thm:UN_close_U} are given in Section \ref{S:long_time}. The strategy of proof is sketched here, and follows the one used in a previous work \cite{agathenerine_longtime_arxiv}.\\

First we show in Proposition \ref{prop:control_initial} that one can find some initial time $T_0(N)\propto \log(N)$ for which $\text{dist}_{L^2}\left( U_N(T_0(N)),\mathcal{U}\right) \leq \dfrac{N^{2\eta}}{\sqrt{N}}$, with $0<\eta<\frac{1}{4}$. This essentially boils down to following the predominant deterministic dynamics of the NFE. Let $T_f(N)=N^\alpha$, we discretize the interval of interest $[T_0(N),T_f(N)]$ into $n_f$ intervals of same length $T$ denoted by $[T_i, T_{i+1}]$, $T$ chosen sufficiently large below. On each subinterval, we can decompose the dynamics of $U_N(t)$ in terms of, at first order, the linearized dynamics of \eqref{eq:NFE_specific} around any stationary solution, modulo some drift terms coming from the mean-field approximation, some noise term coming from the underlying Poisson measure, and some quadratic remaining error coming from the nonlinearity of $f$. It gives a semimartingale decomposition of $U_N(t)- u_{\text{proj}(U_N(T_i))}$ for $t\in [T_i,T_{i+1}]$, detailed in Section \ref{S:mild}. 

Provided one has some sufficent control on each of these terms in the semimartingale expansion on a bounded time interval, we do an iterative procedure  that works as follows: the point is to see that provided $U_N$ is initially close to $u_{\text{proj}(U_N(T_i))} \in \mathcal{U}$, it will remain close to it for a time interval of length $T$ for some sufficiently large deterministic $T>0$ so that the deterministic dynamics prevails upon the other contributions. 
The time horizon at which one can pursue this recursion is controlled by moment estimates on the noise in Proposition \ref{prop:control_noise}.

\subsubsection{About Theorem \ref{thm:theta_fluc}}

Section \ref{S:fluct} is devoted to prove the analysis of the behavior of $U_N$ along $\mathcal{U}$ seen in Theorem \ref{thm:theta_fluc}. We sketch here the strategy of proof. First we use the semimartingale decomposition of $U_N$ $$dU_N(t)=B_N(t)dt + dM_N(t)$$ (with $B_N$ some drift and $M_N$ a martingale defined in \eqref{eq:def_M_N}) and Itô formula to write the semimartingale decomposition of $\theta(U_N(t))$ on the interval $[T_0(N), N\tau_f]$. As in Theorem \ref{thm:UN_close_U}, one can show a careful control on each of the terms appearing in the semimartingale decomposition, as done in Section \ref{S:lemmas_pr}.
The difficulty here is to show rigorously that there is no macroscopic drift appearing on this time scale (this point is essentially due to the invariance by rotation of the whole problem). After rescaling the time by $N$, we identify the noise with a Brownian motion thanks to Aldous' tightness criterion and Lévy's characterization so that the result of Theorem \ref{thm:theta_fluc} follows.

\subsubsection{Extensions}

\paragraph{On the interaction kernel}\

Note that Theorem \ref{thm:theta_fluc} is of local nature: stability holds provided the initial condition $\rho$ is sufficiently close to $\mathcal{U}$. Following \cite{kilpatrick2013}, it would be possible to consider the more general interaction kernel
$$w(x,y)=\sum_{k=0}^n A_k\cos ( k (x-y)),$$
with more that one Fourier mode. The fixed point equation \eqref{eq:A_self} becomes a more complicated system of equations
\begin{equation}
\label{eq:system_Ak}
A_k=\int_S \cos(kx) f\left( \sum_{k=0}^n A_k \cos(kx) \right) dx.
\end{equation}
The exact number of solutions to \eqref{eq:system_Ak} remain unclear but if one can solve \eqref{eq:system_Ak} and show local stability of the solutions $u_\phi(x)=\sum_{k=0}^N A_k \cos(k(x+\phi))$, the same strategy should apply: we would obtain local stability provided one starts sufficiently close to these structures.

\paragraph{Oscillatory behavior}\

Note that $\mathcal{U}$ consists of stationary points. We claim that a similar strategy should apply also to situations where \eqref{eq:def_lambdaN_specific} admits generic oscillations, see \cite{Poquet_GB_L14} in a context of diffusion. We have in particular in mind the  framework proposed in \cite{Ditlevsen2017}: the authors study interacting Hawkes processes with Erlang memory kernel. The population is divided into classes, and the classes interact with a cycling feedback system, so that the large population limit is attracted to non-constant periodic orbits. It is reasonable to think that our techniques can be transposed to this situation, to show that the microscopic system is closed to the limit cycle under their hypotheses in large times and without using the approximating diffusion process.

\section{Stationary solutions (proofs)}\label{S:proof_stat}

Let us first define for any function $r\in L^2(S)$
\begin{equation}\label{eq:def_Iphi}
\mathcal{I}(r):=\int_S r(y)f '(u_{0}(y))dy,
\end{equation} 
where $u_0$ is defined in \eqref{eq:def_U_circle}. We start by giving a computation Lemma that will be useful in the whole paper.

\begin{lem}\label{lem:computation_A}
We have
$$\mathcal{I}(\sin^2)=1, \quad \mathcal{I}(\cos^2)=\mathcal{I}(1)-1 \quad \text{ and } \quad \mathcal{I}(\sin\cos)=0.$$
\end{lem}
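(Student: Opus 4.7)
The plan is to establish the three identities separately, exploiting the symmetry of the integrand and the fixed-point relation \eqref{eq:A_self} satisfied by $A$.

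First I would dispatch the third identity $\mathcal{I}(\sin\cos)=0$ by a parity argument: since $u_{0}(y)=A\cos(y)$ is even in $y$, the weight $f'(u_{0}(y))=f'(A\cos(y))$ is also even, while the integrand $\sin(y)\cos(y)=\tfrac{1}{2}\sin(2y)$ is odd. Hence the integral over the symmetric interval $S=(-\pi,\pi]$ vanishes.

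Next, for $\mathcal{I}(\sin^{2})=1$, the key observation is that $\frac{d}{dy}\bigl[f(A\cos y)\bigr]=-A\sin(y)\,f'(A\cos y)$. Therefore
$$A\int_{S}\sin^{2}(y)\,f'(A\cos y)\,dy \;=\;-\int_{S}\sin(y)\,\frac{d}{dy}\bigl[f(A\cos y)\bigr]dy,$$
and integration by parts on $[-\pi,\pi]$ gives no boundary contribution (since $\sin(\pm\pi)=0$), so this equals $\int_{S}\cos(y)\,f(A\cos y)\,dy$. By Proposition~\ref{prop:stat_sol_sig}, $A=A(\kappa)$ satisfies the fixed-point equation \eqref{eq:A_self}, so this last integral equals $A$. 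Dividing by $A>0$ (which is licit since $A(\kappa)>|\varrho|\geq 0$) yields $\mathcal{I}(\sin^{2})=1$.

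Finally, $\mathcal{I}(\cos^{2})=\mathcal{I}(1)-1$ follows at once from linearity of $\mathcal{I}$ and the Pythagorean identity $\sin^{2}+\cos^{2}=1$, combined with the value of $\mathcal{I}(\sin^{2})$ just obtained. There is no serious obstacle here; the only substantive step is the integration by parts, whose purpose is precisely to feed \eqref{eq:A_self} into the computation.
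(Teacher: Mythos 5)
Your proof is correct and follows essentially the same route as the paper: integration by parts of $\sin(y)\,\tfrac{d}{dy}[f(A\cos y)]$ combined with the fixed-point relation \eqref{eq:A_self} for $\mathcal{I}(\sin^2)=1$, and linearity plus $\sin^2+\cos^2=1$ for the second identity. The only (harmless) variation is that you obtain $\mathcal{I}(\sin\cos)=0$ by a direct parity argument on the integrand $\sin(y)\cos(y)f'(A\cos(y))$, whereas the paper first integrates by parts and then invokes the oddness of $y\mapsto\sin(y)f(A\cos(y))$; both reduce to the same symmetry.
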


\begin{proof} Recall that $u_0=A\cos$, as $A$ solves \eqref{eq:A_self} by integrating by parts we obtain
\begin{equation}\label{eq:A_kappa&Isin}
A= \int_S\cos (y) f \left( A  \cos(y) \right)dy=A  \int_S \sin^2(y) f '\left( u_{0}(y) \right)dy=A  \mathcal{I}(\sin^2),
\end{equation}
and as $A >0$ it implies $\mathcal{I}(\sin^2)=1$. By integrating by parts we also have
$$-A \mathcal{I}(\cos\sin)=\int_{-\pi}^{\pi} \sin(y)f(A\cos(y))dy.$$
Since $y\to \sin(y)f(A\cos(y))$ is odd, we obtain that $\mathcal{I}(\cos\sin)=0$. As $\cos^2= 1-\sin^2$ and $\mathcal{I}$ is linear, we have
$ \mathcal{I}(\cos^2)= \mathcal{I}(1)-\mathcal{I}(\sin^2)=\mathcal{I}(1)-1$.
\end{proof}

\subsection{Stability}
\label{S:stat_sol_stab}

Here we prove Proposition \ref{prop:spect_L_phi}. 

\begin{proof} Let $\phi\in S$. Let us first show that the operator $\mathcal{L}_\phi$ is indeed self-adjoint in $L^2_\phi$. Let $g_1,g_2\in L^2_{\phi}$, we have by Fubini's theorem and recalling Definition \ref{def:ops_phi}
\begin{align*}
\langle \mathcal{L}_{\phi}g_1,g_2\rangle_{\phi}&= -\int_S g_1 g_2 f '(u_{\phi}) + \int_S \left( \int_S \cos(x-y)f '(u_{\phi}(y))g_1(y)dy\right)g_2(x)f '(u_{\phi}(x))dx\\
&= -\int_S g_1 g_2 f '(u_{\phi}) + \int_S f '(u_{\phi}(y))g_1(y) \left( \int_S \cos(x-y)g_2(x)f '(u_{\phi}(x))dx\right)dy\\
&=\langle g_1,\mathcal{L}_{\phi} g_2\rangle_{\phi},
\end{align*}
hence $\mathcal{L}_{\phi}$ is self-adjoint in $L^2_{\phi}$. 

We focus now on its spectrum, we want to prove that it has three distinct eigenvalues, -1, 0 and $\gamma\in(-1,0)$. The following arguments follow the same procedure of the one that can be found in \cite{kilpatrick2013}. First note that $T_\phi$ is compact in $L^2_\phi$ (in fact, with finite range). Hence it has a discrete spectrum consisting of eigenvalues. Let $\lambda$ be an eigenvalue of $\mathcal{L}_{\phi}$ and $\psi$ an associated eigenvector, that is $\mathcal{L}_{\phi} \psi = \lambda \psi$ hence $(\lambda+1)\psi = T_{\phi} \psi$ with Definition \ref{def:ops_phi}.  As seen in Remark \ref{rem:invariance}, $\lambda$ does not depend on $\phi$ and if $\psi$ is an eigenvector for $\phi=0$, then $\psi(\cdot-\phi)$ is an eigenvector for $\phi$. Hence, in the following, we focus on the case $\phi=0$. We have
\begin{equation}
\label{eq:decomp_TO}
T_{0}\psi(x)=A_{0}(\psi)\cos(x) + B_{0}(\psi)\sin(x),
\end{equation}
with
\begin{equation}\label{eq:def_ABphi}
A_{0}(\psi):=  \int_S \cos(y)f '\left( u_{0}(y)\right)\psi(y)dy, \quad
B_{0}(\psi) := \int_S \sin(y)f '\left( u_{0}(y)\right)\psi(y)dy.
\end{equation}
The eigenvalue -1 is spanned by functions $\psi\in L^2$ such that $A_0(\psi)=B_0(\psi)=0$. Recall \eqref{eq:def_Iphi}, we have that, since $(\lambda+1)\psi=T_0\psi$,
\begin{align}\label{eq:lambaAphi}
(\lambda+1) A_{0}(\psi)&= \int_S \cos(y) (\lambda+1)\psi(y) f '(u_{0}(y))dy \notag\\
&=  \int_S \cos(y) \left(A_{0}(\psi)\cos(y)+B_{0}(\psi)\sin(y)\right) f '(u_{0}(y))dy \notag\\
&= A_{0}(\psi)\mathcal{I}(\cos^2) + B_{0}(\psi) \mathcal{I}(\sin\cos),
\end{align}
and similarly,
\begin{equation}\label{eq:lambaBphi}
(\lambda+1)B_0(\psi)= A_{0}(\psi) \mathcal{I}(\sin\cos) + B_{0}(\psi) \mathcal{I}(\sin^2).
\end{equation}
See Lemma \ref{lem:computation_A} for the computations of 
$\mathcal{I}(\cos^2)$, $\mathcal{I}(\sin^2)$ and $\mathcal{I}(\sin\cos)$. Putting these computations into \eqref{eq:lambaAphi} and \eqref{eq:lambaBphi} implies that $(\lambda,\psi)$ solves $\mathcal{L}_0\psi=\lambda\psi$ if and only if
$$
\left\{
\begin{array}{ll}
(\lambda +1) A_0(\psi) &= \left(\mathcal{I}(1)-1\right) A_0(\psi) \\
(\lambda +1) B_0(\psi) &=  B_0(\psi).
   \end{array}
   \right.
 $$
Recall that with no loss of generality, one can suppose that $ \psi$ is such that $(A_{ 0}(\psi), B_{ 0}(\psi))\neq (0,0)$. Then $( \lambda, \psi)$ solves the previous system if and only if, either $ \lambda=0$ with $A_{ 0}(\psi)= 0$ and $B_{ 0}(\psi)\neq 0$ (and hence we see from \eqref{eq:decomp_TO} that the eigenvalue $0$ is spanned by $\sin \propto v_{ 0}$) or $ \lambda= \gamma$ given by
\begin{equation}
\label{eq:def_lambda_stable}
\gamma:=\mathcal{I}(1)-2=\int_S f'(A\cos(x))dx-2,
\end{equation}
with $A_{ 0}(\psi)\neq 0$ and $ B_{ 0}(\psi)=0$, so that the eigenspace related to $\gamma$ is one-dimensional, spanned by $ \cos \propto u_{ 0}$. The fact that $ \left\langle u_{ \phi}\, ,\, v_{ \phi}\right\rangle_{ \phi}=0$ follows immediately from the fact that $ u_{ \phi}$ is even and $ v_{ \phi}$ is odd. The last eigenvalue $\lambda=-1$ is spanned by $\psi$ such that $A(\psi)=B(\psi)=0$.\\

To conclude the proof of Proposition \ref{prop:spect_L_phi}, it remains to prove the inequalities \eqref{eq:contraction_Ps_phi}, \eqref{eq:op_Lphi_borne1} and \eqref{eq:op_Lphi_borne2}. We come back to a general $\phi \in S$. By definition of the projection $P_{\phi}^\circ$ in \eqref{eq:def_proj_u'phi},  we have that $\mathcal{L}_\phi P_{\phi}^\circ= 0$. Moreover,  by definition of $P_{\phi}^\perp $ in \eqref{eq:def_proj_stable}, we have that for any $g\in L^2_{\phi}$, $P_{\phi}^\perp g$ belongs in the orthogonal of $\text{Ker}(\mathcal{L}_{\phi})$ in  $L^2_{\phi}$. Then $\mathcal{L}_\phi P_{\phi}^\perp =\mathcal{L}_\phi (Id-P_{\phi}^\circ)$ generates a contraction semigroup on $L^2(S)$ and \eqref{eq:contraction_Ps_phi} follows then from functional analysis (see e.g. Theorem 3.1 of \cite{Pazy1974}). For the two last inequalities, we use Remark \ref{rem:op_phi}. From the definition of the projection $P_{\phi}^\circ $ in \eqref{eq:def_proj_u'phi}, we have that
$$e^{t\mathcal{L}_\phi}P_{\phi}^\circ g = \dfrac{\langle g,v_{\phi}\rangle_{\phi}}{\Vert v_{\phi} \Vert_{\phi}}e^{t\mathcal{L}_\phi} v_{\phi}= \dfrac{\langle g,v_{\phi}\rangle_{\phi}}{\Vert v_{\phi} \Vert_{\phi}} v_{\phi}$$
as $v_\phi\in \text{Ker}(\mathcal{L}_\phi)$. We obtain then $\Vert e^{t\mathcal{L}_\phi}P_{\phi}^\circ g \Vert_\phi \leq \Vert g \Vert_\phi \Vert v_\phi \Vert_\phi$. From \eqref{eq:contraction_Ps_phi} we have $\Vert e^{t\mathcal{L}_\phi}P_{\phi}^\perp g \Vert_\phi\leq e^{\gamma t}\Vert P_{\phi}^\perp g\Vert_\phi \leq C_P \Vert g \Vert_\phi$ for some $C_P>0$, that is exactly \eqref{eq:op_Lphi_borne2}. As $\Vert e^{t\mathcal{L}_\phi}g\Vert_{2} \leq \Vert e^{t\mathcal{L}_\phi}P_{\phi}^\circ g\Vert_{2} + \Vert e^{t\mathcal{L}_\phi}P_{\phi}^\perp g\Vert_{2}$, \eqref{eq:op_Lphi_borne1} follows for the choice $C_\mathcal{L}=C_1 C_2 \max \left(\sup_{\phi \in S} \Vert v_{\phi} \Vert_{\phi}, C_P\right)$. 
\end{proof}

\subsection{Projections on the manifold}\label{S:proof_proj}

We prove that both the variational phase seen in Proposition \ref{prop:phase_proj} and isochronal phase seen in Proposition \ref{prop:ex_isochron} are well defined.

\begin{proof}[Proof of Proposition \ref{prop:phase_proj}] (similar to \cite{lucon_poquet2017}[Lemma 2.8])
Define for any $(g,\phi)\in L^2(S)\times S$:
$$F(g,\phi):=\int_S \left( g(x) - u_{\phi}(x)\right) v_{\phi}(x) f '(u_{\phi}(x))dx=\langle g-u_{\phi},v_{\phi} \rangle_{\phi}.$$
We have for any fixed $\phi_0$, $F(u_{\phi_0},\phi_0)=0$. Note that $F$ is smooth in both variables as it can be written
$F(g,\phi)=-A \int_S \left( g(x) - A \cos(x+\phi)(x)\right) \sin(x+\phi) f '(u_{\phi}(x))dx$.
Moreover, $\partial_\phi F(u_{\phi_0},\phi_0) = - \langle v_{\phi_0},v_{\phi_0}\rangle_{\phi_0}=-A ^2\mathcal{I}_{\phi_0}(\sin^2)$ with $\mathcal{I}_{\phi}(r):=\int_S r(y+\phi)f '(u_{\phi}(y))dy$. By invariance on the circle $\mathcal{I}_{\phi_0}(\sin^2)=\mathcal{I}(\sin^2)$ defined in \eqref{eq:def_Iphi} and Lemma \ref{lem:computation_A} implies then that $\partial_\phi F(u_{\phi_0},\phi_0) = -A ^2=-A(\kappa)^2\neq 0$ with Proposition \ref{prop:stat_sol_sig}. By the implicit function theorem, for any $\phi_0$ there exists a neighborhood $\mathcal{V}(u_{\phi_0})$ of $u_{\phi_0}$ such that the projection is well defined (i.e. for any $g\in \mathcal{V}(u_{\phi_0})$, there exists a unique $\phi$ such that $F(g,\phi)=0$ and $g\mapsto	\text{proj}_{\mathcal{U}}(g)$ is smooth). By compactness of $\mathcal{U}$, the existence of $\varpi$ and the result of Proposition \ref{prop:phase_proj} follow. The situation can be summarized by the following Figure \ref{fig:projection}.

\begin{figure}[h]
\begin{center}
\begin{tikzpicture}[scale=0.5]
\draw (0,0) circle (4) ;
\draw [dotted] (0,0) circle (3) ;
\draw [dotted] (0,0) circle (5) ;
\draw [dashed] (-5,0)--(-4,0) node[midway,above] {$\varpi$};
\draw (-30:4) node[below right]{$u_{\phi}=A\cos(\cdot+\phi)$} ;
\draw (-30:4) node {$\bullet$} ;
\draw (45:4) node[below]{$u_{\phi_0}$} ;
\draw (45:4) node {$\bullet$} ;
\draw (45:4.8) node[right]{$g$} ;
\draw (45:4.8) node {$\bullet$} ;
\draw [->] (45:4.8) -- (45:4.1);
\draw [->] (45:4) -- ++(-1,1) node[above]{$v_{\phi_0}$};
\draw (-90:4) node[below]{$\mathcal{U}$};
\draw (8,4) node[rectangle,fill=gray!20]{$\phi_0=\text{proj}(g)$};
\draw (8,3) node[rectangle,fill=gray!20]{$g-u_{\phi_0}\perp v_{\phi_0}$};
\end{tikzpicture}
\end{center}
\caption{Projection of $g\in L^2(S)$ on $\mathcal{U}$}\label{fig:projection}
\end{figure}
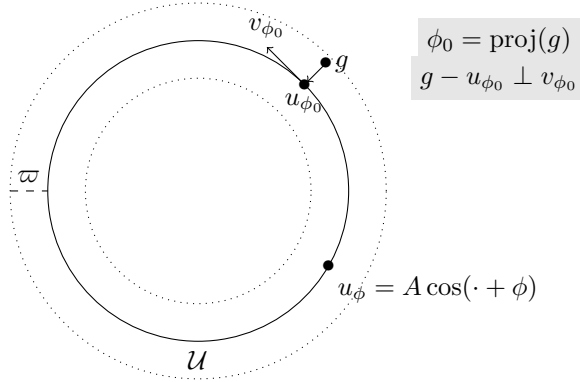
\end{proof}

\begin{proof}[Proof of Proposition \ref{prop:ex_isochron} ]
We reproduce the argument of \cite[Theorem 3.1]{AdamsMacLaurin2022arxiv} that establishes the existence and regularity of the isochron map in a more general context than here.

Let $g\in B(\mathcal{U},\varepsilon_0)$ and $\left(\epsilon_n\right)_n$ a sequence decreasing to 0. The first step is to prove that $\theta(g)$ satisfying \eqref{eq:def_isochronal_phase} exists. To do so, using the stability of $\mathcal{U}$ proved in Corollary \ref{cor:U_stable}, one can find an increasing sequence of times $(t_n)$ and a sequence of closed non-empty sets $\Phi_n\subset \mathcal{U}$ such that for all $n\in \mathbb{N}$ and $\theta\in \Phi_n$, $\Vert \psi_{t_n} (g) - u_\theta\Vert_2\leq C\epsilon_n$ for some constant $C>0$. It gives in particular that the diameter of $\Phi_n$ tends to zero as $n\to\infty$, hence the existence of an unique $\theta(g)$ such that $\cap_{n\in\mathbb{N}} \Phi_n=\{ u_{\theta(g)} \}$ by Cantor's Intersection Theorem. The second step is to prove the regularity of $\theta:B(\mathcal{U},\varepsilon_0)\to S$. As $\mathcal{U}$ is parameterized by $S$, we can define $\pi(u)$ for $u\in \mathcal{U}$ as the unique $\phi\in S$ such that $u=u_\phi$. As the flow $\psi$ is $\mathcal{C}^\infty$, the map $g\mapsto =\lim_{t\to\infty}\psi_t(g)$ is well defined and $\mathcal{C}^\infty$, and we have also $\lim_{t\to\infty}\psi_t(g)=u_{\theta(g)}$. Then $\theta(g)$ can be written as $\pi \left( \lim_{t\to\infty} \psi_t (g)\right)$, hence $g\mapsto \theta(g)$ is indeed $\mathcal{C}^\infty$.\\

We focus now on the derivatives of $g\mapsto \theta(g)$. Define $\Gamma: g\in B(\mathcal{U},\varepsilon_0)\mapsto \Gamma(g)=\lim_{t\to\infty}\Psi_t g =u_{\theta(g)}\in \mathcal{U}$. From Proposition \ref{prop:ex_isochron}, $\Gamma$ is smooth and is differentiable, and for $g,h\in L^2(S)$, $D\Gamma(g)[h]=u_{\theta(g)}'D\theta(g)[h]=v_{\theta(g)}D\theta(g)[h]\in L^2$. Applied for $g=u_\phi$ and taking the scalar product with $v_\phi$, one obtains
\begin{equation}\label{eq:diff_theta_aux}
\langle D\Gamma(u_\phi)[h],v_\phi\rangle=D\theta(u_\phi)[h] \Vert v_\phi \Vert^2.
\end{equation}

Let us focus on $ D\Psi_t g[h]$. Let $g_t$ be the solution of \eqref{eq:NFE_specific} with $g_0=g$, that is $g_t=\Psi_t(g)$, and $h_t$ the solution of \eqref{eq:NFE_specific} with $h_0=g+h$, that is $h_t=\Psi_t(g+h)$. Then
\begin{align*}
\partial_t(h_t-g_t)&=-(h_t-u_t)+\cos\ast\left( f(h_t)-f(g_t)\right)\\
&=-(h_t-g_t)+\cos\ast \left( f'(g_t)(h_t-g_t)\right)+ r_t
\end{align*}
with Taylor's formula and where $r_t:=\cos\ast\left( \left(h_t-g_t\right)^2 \int_0^1 (1-s)f''\left(g_t+s\left(h_t-g_t\right)\right)ds\right)=o(\Vert h \Vert)$.  We have then that $D\Psi_t(g)[h]=:w_t$ with 
\begin{equation}
\label{eq:Dtheta_aux}
\partial_t w_t=-w_t+\cos\ast \left( f'(\Psi_t g)(w_t)\right), \quad w_0=h.
\end{equation}
In particular for the choice $g=u_\phi$,  $D\Psi_t(u_\phi)[h]=e^{t\mathcal{L}_\phi}h$ where $\mathcal{L}_\phi$ is defined in \eqref{eq:def_L_phi}. Moreover we can write with the operators defined in Definition \ref{def:ops_phi}
$$e^{t\mathcal{L}_\phi}h=e^{t\mathcal{L}_\phi} \left( P_{\phi}^\circ h+ P_{\phi}^\perp h\right)= \dfrac{\langle h, v_\phi\rangle_\phi}{\Vert v_\phi\Vert_\phi} v_\phi + e^{t\mathcal{L}_\phi} P_{\phi}^\perp h.$$
From \eqref{eq:contraction_Ps_phi}, $\Vert e^{t\mathcal{L}_\phi} P_{\phi}^\perp h \Vert_\phi \leq e^{t\gamma}\Vert P_{\phi}^\perp h\Vert_\phi$ hence $\lim_{t\to\infty}e^{t\mathcal{L}_\phi}h= \dfrac{\langle h, v_\phi\rangle_\phi}{\Vert v_\phi\Vert_\phi} v_\phi$. As $\Gamma(u_\phi)=\lim_{t\to\infty}\Psi_tu_\phi=u_\phi$ and $\lim_{t\to\infty}D\Psi_t(u_\phi)[h]=\dfrac{\langle h, v_\phi\rangle_\phi}{\Vert v_\phi\Vert_\phi} v_\phi$, we obtain that
$$D\Gamma(u_\phi)[h]= D\left(\lim_{t\to\infty} \Psi_t u_\phi\right)[h]=\lim_{t\to\infty} D\Psi_t (u_\phi)[h]=\lim_{t\to\infty} e^{t\mathcal{L}_\phi}h=\dfrac{\langle h, v_\phi\rangle_\phi}{\Vert v_\phi\Vert_\phi} v_\phi,$$
which gives with \eqref{eq:diff_theta_aux} the result \eqref{eq:diff_theta}.

We focus now on $D^2\theta$. Recall $\Gamma$, for $g,h,l\in B(\mathcal{U},\varepsilon_0)$, $D^2\Gamma(g)[h,l]=-D\theta(g)[h]D\theta(g)[l]u_{\theta	(g)} + D^2\theta(g)[h,l]v_{\theta(g)}$. Applied for $g=u_\phi$, it gives with  \eqref{eq:diff_theta}
$$D^2\Gamma(u_\phi)[h,l]=-\dfrac{\langle v_\phi,h\rangle_\phi\langle v_\phi,l\rangle_\phi}{\Vert v_\phi\Vert_\phi^2}u_{\phi} + D^2\theta(u_\phi)[h,l]v_\phi.$$Taking the scalar product with $v_\phi$, as $\langle u_\phi,v_\phi\rangle_\phi=0$ we obtain
\begin{equation}
\label{eq:aux_diff2_theta}
D^2\theta(u_\phi)[h,l] = \dfrac{\langle D^2\Gamma(u_\phi)[h,l],v_\phi\rangle_\phi}{\Vert v_\phi\Vert_\phi^2}.
\end{equation}
Let us focus on $ D^2\Psi_t g[h,l]$. We have that $D\Psi_t(g)[h]=w_t$, recall that it solves \eqref{eq:Dtheta_aux}. Let $D\Psi_t(g+l)[h]:=\tilde{w}_t$, it solves
$$\partial_t \tilde{w}_t=-\tilde{w}_t+\cos\ast \left( f'(\Psi_t (g+l))\tilde{w}_t\right), \quad \tilde{w}_0=h.$$
As done before, we obtain that $\zeta_t:=\tilde{w}_t-w_t$ solves with $ \zeta_0=0$
\begin{align*}
\partial_t\zeta_t
&= - \zeta_t+ \cos \ast \left[ f'\left( \Psi_t(g+l)\right) (\zeta_t+w_t)- f'\left(\Psi_t g\right)w_t\right]\\
&=  - \zeta_t+ \cos \ast \left[ f'\left( \Psi_t(g+l)\right) \zeta_t\right] +\cos\ast \left[ \left(  f'\left( \Psi_t(g+l)\right) - f'\left(\Psi_t g\right)\right)w_t\right].
\end{align*}
From Taylor expansion in $l$,
\begin{align*}
f'\left( \Psi_t(g+l)\right)&=f'\left(\Psi_t(g)+D\Psi_t(g)[l] + \int_0^1 (1-s)D^2\Psi_t(g)[l]^2ds \right)\\
&= f'\left(\Psi_t(g)\right) +f''\left(\Psi_t(g)\right) D\Psi_t(g)[l] + o(\Vert l \Vert)
\end{align*}
hence
$$\cos \ast \left[ f'\left( \Psi_t(g+l)\right) \zeta_t\right]  = \cos \ast \left( f'\left(\Psi_t(g)\right)  \zeta_t\right)+ O(\Vert l \Vert),$$
and
\begin{align*}
\cos\ast \left[ \left(  f'\left( \Psi_t(g+l)\right) - f'\left(\Psi_t g\right)\right)w_t\right]&=\cos\ast\left(  f''(\Psi_tg)D\Psi_t g[l] w_t\right) + o(\Vert l \Vert)\\
&= \cos\ast\left(  f''(\Psi_tg)D\Psi_t g[l]D\Psi_t g[h]\right) +o(\Vert l \Vert).
\end{align*}
We obtain then after linearizing that $ D^2\Psi_t g[h,l]=\xi_t$ is solution of 
$$\partial_t \xi_t = -\xi_t + \cos\ast\left(f'(\Psi_t g) \xi_t \right) +\cos\ast\left(  f''(\Psi_tg)D\Psi_t g[l]D\Psi_t g[h]\right), \quad \xi_0=0.$$
In particular, for the choice $g=u_\phi$,
$$\partial_t \xi_t = \mathcal{L}_\phi\xi_t + \cos\ast\left[ f''(u_\phi) \left( e^{t\mathcal{L}_\phi}h\right) \left( e^{t\mathcal{L}_\phi}l\right) \right], \quad \xi_0=0,$$
hence it solves the mild equation
$$\xi_t= \int_0^t e^{(t-s)\mathcal{L}_\phi} \left( \cos \ast \left( f''(u_\phi) \left( e^{s\mathcal{L}_\phi}h\right) \left( e^{s\mathcal{L}_\phi}l\right) \right) \right) ds.$$ 
Recall \eqref{eq:aux_diff2_theta}, hence we focus now on $\langle \xi_t,v_\phi\rangle_\phi$.  From Proposition \ref{prop:spect_L_phi}, $\mathcal{L}_\phi$ is self-adjoint hence 
\begin{align*}
\langle \xi_t,v_\phi\rangle_\phi 
&=  \int_0^t \langle  \cos \ast \left( f''(u_\phi) \left( e^{s\mathcal{L}_\phi}h\right) \left( e^{s\mathcal{L}_\phi}l\right) \right),   e^{(t-s)\mathcal{L}_\phi} v_\phi\rangle_\phi ds\\
&=  \int_0^t \langle  \cos \ast \left( f''(u_\phi) \left( e^{s\mathcal{L}_\phi}h\right) \left( e^{s\mathcal{L}_\phi}l\right) \right),  v_\phi\rangle_\phi ds
\end{align*}
as $v_\phi \in \text{Ker} \mathcal{L}_\phi$. Recall \eqref{eq:def_proj_u'phi} and \eqref{eq:def_proj_alpha_gamma}. By the spectral decomposition of $ \mathcal{ L}_{ \phi}$ along its eigenvalues $0$, $ \gamma$ and $-1$, one has with Proposition \ref{prop:spect_L_phi}, for $s\geq0$,
\begin{align*}
e^{s \mathcal{ L}_{ \phi}}h&= \alpha_{\phi}^\circ(h) v_{ \phi} + e^{s \gamma} \alpha_{\phi}^\gamma(h) u_{ \phi} + e^{-s} \left(h- \alpha_{\phi}^\circ(h) v_{ \phi} -\alpha_{\phi}^\gamma(h) u_{ \phi}\right)\\
&= e^{-s }h + \alpha_{\phi}^\circ(h) (1- e^{-s}) v_{ \phi} + \alpha_{\phi}^\gamma(h) \left(e^{ s \gamma} - e^{ -s}\right) u_{ \phi},
\end{align*}
so that one obtains
\begin{align}
\left(e^{s \mathcal{ L}_{ \phi}}h\right)\left(e^{s \mathcal{ L}_{ \phi}}l\right)&= \alpha_{\phi}^\circ(h) \alpha_{\phi}^\circ(l) \left(1- e^{ -s}\right)^{ 2} v_{ \phi}^{ 2} + \alpha_{\phi}^\gamma(h) \alpha_{\phi}^\gamma(l) \left(e^{ s \gamma} - e^{ -s}\right)^{ 2} u_{ \phi}^{ 2} \label{aux:xi1}\\
&+e^{ -s}(1-e^{ -s}) \left\lbrace\alpha_{\phi}^\circ(h) l+ \alpha_{\phi}^\circ(l) h\right\rbrace v_{ \phi} \label{aux:xi2}\\
&+ e^{ -s} \left(e^{ s \gamma} - e^{ -s}\right) \left\lbrace \alpha_{\phi}^\gamma(h) l + \alpha_{\phi}^\gamma(l) h\right\rbrace u_{ \phi} \label{aux:xi3}\\
&+ \left(1- e^{ -s}\right) \left(e^{ s \gamma}- e^{ - s}\right) \left\lbrace \alpha_{\phi}^\circ(h) \alpha_{\phi}^\gamma(l)+ \alpha_{\phi}^\circ(l) \alpha_{\phi}^\gamma(h)\right\rbrace u_{ \phi} v_{ \phi} \label{aux:xi4}\\
&+ e^{ -2 s} hl. \label{aux:xi5}
\end{align}
We compute now $ \left\langle \xi_{ t}\, ,\, v_{ \phi}\right\rangle_{ \phi}$ based on the previous decomposition. Fix some generic test functions $h$ and $l$. Then
\begin{align*}
\left\langle \cos \ast \left(f^{ \prime\prime}(u_{ \phi}) hl\right)\, ,\, v_{ \phi}\right\rangle_{ \phi}&= \int_{ S} v_{ \phi}(x) f^{ \prime}(u_{ \phi}(x)) \int_{ S} \cos(x-y) f^{ \prime\prime}(u_{ \phi})(y) h(y)l(y) dy~  dx.
\end{align*}
Expanding the cosine within the convolution and noticing that $ \int_{ S} v_{ \phi}(x) f^{ \prime}(u_{ \phi}(x)) \cos(x+ \phi) dx=0$, we have with Lemma \ref{lem:computation_A}
\begin{align*}
\left\langle \cos \ast \left(f^{ \prime\prime}(u_{ \phi}) hl\right)\, ,\, v_{ \phi}\right\rangle_{ \phi}&= \left(\int_{ S} v_{ \phi}(x) f^{ \prime} \left(u_{ \phi}(x)\right) \sin(x+ \phi) dx \right) \int_{ S} \sin(y+ \phi) f^{ \prime\prime}( u_{ \phi}(y))h(y)l(y) dy,\\&=
-A \mathcal{I}(\sin^2) \int_{ S} \sin(y+ \phi) f^{ \prime\prime}( u_{ \phi}(y))h(y)l(y) dy= \int_{ S} f^{ \prime\prime}( u_{ \phi}(y)) v_{ \phi}(y)h(y)l(y) dy.
\end{align*}
If now we take $h=l= v_{ \phi}$ or $h=l= u_{ \phi}$, we see that the two terms of \eqref{aux:xi1} give a zero contribution to $ \left\langle \xi_{ t}\, ,\, v_{ \phi}\right\rangle_{ \varphi}$ as the function within the last integral is odd. Taking now $h=v_{ \phi}$ (resp. $h=u_{ \phi}$) for given $l$, we see that the generic term within \eqref{aux:xi2} (resp. \eqref{aux:xi3}) gives rise to
\begin{align*}
\left\langle \cos \ast \left(f^{ \prime\prime}(u_{ \phi}) l v_{ \phi}\right)\, ,\, v_{ \phi}\right\rangle_{ \phi}&= \int_{ S} f^{ \prime\prime}(u_{ \phi}) v_{ \phi}(y)^{ 2}l(y) dy,\\
\left\langle \cos \ast \left(f^{ \prime\prime}(u_{ \phi}) l u_{ \phi}\right)\, ,\, v_{ \phi}\right\rangle_{ \phi}&= \int_{ S} f^{ \prime\prime}(u_{ \phi}) v_{ \phi}(y) u_{ \phi}(y)l(y) dy.
\end{align*}
Applying finally the last expression for $l=v_{ \phi}$ gives for \eqref{aux:xi4}, by integration by parts
\begin{align*}
\left\langle \cos \ast \left(f^{ \prime\prime}(u_{ \phi}) u_{ \phi}v_{ \phi}\right)\, ,\, v_{ \phi}\right\rangle_{ \phi}&= \int_{ S} f^{ \prime\prime}(u_{ \phi}) v_{ \phi}(y)^{ 2} u_{ \phi}(y) dy= - \int_{ S} \frac{ {\rm d}}{ {\rm d}y} \left\lbrace u_{ \phi}(y) v_{ \phi}(y)\right\rbrace f^{ \prime}(u_{ \phi}(y)) dy,\\&=- \int_{ S} v_{ \phi}(y)^{ 2} f^{ \prime}(u_{ \phi}(y)) dy+ \int_{ S} u_{ \phi}(y)^{ 2} f^{ \prime}(u_{ \phi}(y)) dy= A^{ 2} \gamma,
\end{align*}
where we used \eqref{eq:def_lambda_stable}. Recall the definition of $ \beta_{\phi}$ in \eqref{eq:def_beta_phi2}, putting all these estimates together we obtain 
\begin{multline*}
\langle \xi_t,v_\phi\rangle_\phi =\int_0^t \left[  e^{-s}\left(1-e^{-s}\right)\left(  \alpha_\phi^\circ(h)\beta_\phi(v_\phi,l) +\alpha_\phi^\circ(l)\beta_\phi(v_\phi,h)\right) \right.\\ \left.+ e^{-s}\left(e^{s\gamma}-e^{-s}\right) \left(  \alpha_\phi^\gamma(h)\beta_\phi(u_\phi,l) +\alpha_\phi^\gamma(l)\beta_\phi(u_\phi,h)\right) \right.\\ \left. + \left(1-e^{-s}\right)\left(e^{s\gamma}-e^{-s}\right) A^2\gamma \left( \alpha_\phi^\circ(h)\alpha_\phi^\circ(l)+\alpha_\phi^\gamma(h)\alpha_\phi^\gamma(l)\right)+ e^{-2 s}\beta_\phi(h,l) \right] ds,
\end{multline*}
so that
\begin{multline*}
 \lim_{ t\to\infty}\left\langle \xi_{t}\, ,\, v_{ \phi}\right\rangle_{ \phi}= \frac{ 1}{ 2} \left(  \alpha_\phi^\circ(h)\beta_\phi(v_\phi,l) +\alpha_\phi^\circ(l)\beta_\phi(v_\phi,h)\right) + \frac{ 1+ \gamma}{ 2(1- \gamma)} \left(  \alpha_\phi^\gamma(h)\beta_\phi(u_\phi,l) +\alpha_\phi^\gamma(l)\beta_\phi(u_\phi,h)\right) \\
-  A^{ 2}\frac{ (2- \gamma)(1+ \gamma)}{ 2(1-\gamma)} \left( \alpha_\phi^\circ(h)\alpha_\phi^\circ(l)+\alpha_\phi^\gamma(h)\alpha_\phi^\gamma(l)\right) + \frac{ 1}{ 2} \beta_{ \phi}(h,l).
\end{multline*}
As $D^2\theta(u_\phi)[h,l]=\dfrac{1}{A^2} \lim_{t\to\infty} \langle \xi_t,v_\phi\rangle_\phi$, we obtain \eqref{eq:diff2_theta}.
\end{proof}

\section{Long time behavior (proofs)}
\label{S:long_time}

The aim of this section is to prove Theorem \ref{thm:UN_close_U}. 

\subsection{Main structure of the proof of Theorem  \ref{thm:UN_close_U}} \label{S:main_structure_close}

First, fix some constant $\eta$ such that 
\begin{equation}\label{eq:hyp_eta}
0<\eta<\frac{1}{4}.
\end{equation}
We also look for some $T>0$ that verifies
\begin{equation}\label{eq:choice_T}
C_PC_{\mathcal{L}} e^{T\gamma} \leq 1/4,
\end{equation}
where  $C_P$, $C_{\mathcal{L}} $ and $\gamma$ are introduced in Proposition  \ref{prop:spect_L_phi}. We first define the initial time $T_0(N)$ thanks to the following Proposition, whose proof is postponed to Section \ref{S:initialisation}.

\begin{prop}[Initialisation]\label{prop:control_initial}  In the framework of Theorem \ref{thm:UN_close_U}, there exists a deterministic phase $\theta_0\in S$, an event $B_N$ such that $\mathbf{P}(B_N)\xrightarrow[N\to\infty]{}1$ and a constant $C>0$ such that for all $\varepsilon >0$, for $N$ sufficiently large, on the event $B_N$, the projection $\psi=\psi_0^N=\text{proj}\left(U_N\left(C\log N\right)\right)$ is well defined and 
\begin{align}
&\Vert U_N(C\log N) - u_{\psi_0^N}\Vert_{2}\leq \dfrac{N^{2\eta}}{\sqrt{N}}, \label{eq:UN_init}\\
&\vert \psi_0^N - \theta_0 \vert \leq \varepsilon. \label{eq:init_prox_phase}
\end{align}
\end{prop}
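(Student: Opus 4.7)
The natural approach is to let the deterministic NFE dynamics do most of the work on a logarithmic time window, while controlling the stochastic deviation via propagation of chaos on that same window. Set $\theta_0 := \theta(\rho)$, the isochronal phase of $\rho$ (well-defined by Proposition \ref{prop:ex_isochron} since $\rho \in B(\mathcal{U}, \varepsilon_0)$ by hypothesis), so that $\psi_t(\rho) \to u_{\theta_0}$ in $L^2$ as $t \to \infty$. Linearizing \eqref{eq:NFE_specific} around $u_{\theta_0}$ and exploiting the spectral gap $\gamma < 0$ from Proposition \ref{prop:spect_L_phi}, this convergence is exponential: there is $C_1 > 0$ such that $\Vert \psi_t(\rho) - u_{\theta_0} \Vert_2 \leq C_1 e^{\gamma t / 2}$ for all $t \geq 0$. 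Choosing $C$ sufficiently large that $|\gamma| C / 2 > 1/2 - 2\eta$ then makes this right-hand side at $t = C\log N$ strictly smaller than $\tfrac12 N^{2\eta - 1/2}$ for $N$ large.

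For the stochastic part, I would bound $\Vert U_N(t) - \psi_t(\rho)\Vert_2$ on $[0, C \log N]$ via the semimartingale decomposition of $U_{N,i}(t) - \psi_t(\rho)(x_i)$: the martingale part has predictable quadratic variation of order $1/N$ (since $f\leq 1$), the sampling/discretization of $\psi_t(\rho)$ along the grid $(x_i)$ contributes an $O(1/\sqrt N)$ error uniform in $t$, and the drift is Lipschitz in $U_N-\psi_t(\rho)$ with constant controlled by $\Vert f'\Vert_\infty$ times the cosine kernel. A standard Gr\"onwall/BDG estimate, combined with the assumption \eqref{eq:hyp_init} on $\Vert U_N(0)-\rho\Vert_2$, then yields
$$\mathbf{E}\left[\sup_{t\in [0,T]} \Vert U_N(t) - \psi_t(\rho)\Vert_2^2\right]^{1/2} \leq C_2 \frac{e^{cT}}{\sqrt{N}} + o(1),$$
for some $c,C_2>0$. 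Taking $T=C\log N$ with $C$ \emph{also} small enough that $cC<2\eta$ makes this bound $o(N^{2\eta-1/2})$, and Markov's inequality then produces an event $B_N$ with $\mathbf{P}(B_N)\to 1$ on which $\Vert U_N(C\log N)-\psi_{C\log N}(\rho)\Vert_2\leq \tfrac12 N^{2\eta-1/2}$.

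Combining the deterministic and stochastic bounds by the triangle inequality yields, on $B_N$, $\Vert U_N(C\log N)-u_{\theta_0}\Vert_2\leq N^{2\eta}/\sqrt N$. Since $N^{2\eta-1/2}\to 0$ lies below the radius $\varpi$ of Proposition \ref{prop:phase_proj} for $N$ large, $\psi_0^N:=\text{proj}(U_N(C\log N))$ is well defined and \eqref{eq:UN_init} is immediate. Smoothness of $\text{proj}$ near $u_{\theta_0}$ (with $\text{proj}(u_{\theta_0})=\theta_0$) gives a local Lipschitz estimate $|\psi_0^N-\theta_0|\leq L\,\Vert U_N(C\log N)-u_{\theta_0}\Vert_2 \to 0$ as $N\to\infty$, yielding \eqref{eq:init_prox_phase} for $N$ large. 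Note that $\theta_0=\theta(\rho)$ is indeed deterministic, as required.

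\textbf{Main obstacle.} The delicate point is the simultaneous feasibility of the two constraints on $C$: large enough to bring the deterministic flow within $N^{2\eta-1/2}$ of $\mathcal{U}$, yet small enough that the exponential Gr\"onwall constant $e^{cT}$ of propagation of chaos stays below $N^{2\eta}$. For small $\eta$ this forces $c$ to be comparable to (or smaller than) $|\gamma|$, which generally requires sharpening the naive choice $c=\Vert f'\Vert_\infty$ (large, since $f$ is a steep sigmoid) by linearizing the Hawkes dynamics around $\mathcal{U}$ itself, so that the Gr\"onwall argument effectively runs with the contracting semigroup $e^{t\mathcal{L}_{\theta_0}}P^\perp_{\theta_0}$ of Proposition \ref{prop:spect_L_phi} rather than the trivial exponential $e^{\Vert f'\Vert_\infty t}$. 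This is the same mechanism that will be used repeatedly in the iterative procedure of Section \ref{S:long_time}, so one expects the same ingredients to suffice here at the initialization step.
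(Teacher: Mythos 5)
Your overall architecture (deterministic contraction toward $u_{\theta_0}$ with $\theta_0=\theta(\rho)$, plus a probabilistic control of $U_N-\psi_t(\rho)$, plus smoothness of the projection to get \eqref{eq:init_prox_phase}) matches the paper's, and you correctly locate the difficulty in the incompatibility between the two constraints on $C$. But the proposal stops exactly where the proof has to start: the paragraph labelled ``Main obstacle'' is the content of the proposition, and it is deferred rather than resolved. Moreover, the resolution you sketch --- run a single Gr\"onwall estimate over $[0,C\log N]$ with the linearization taken around a \emph{fixed} $u_{\theta_0}$ so that the contracting semigroup $e^{t\mathcal{L}_{\theta_0}}P^\perp_{\theta_0}$ replaces $e^{\Vert f'\Vert_\infty t}$ --- does not work as stated, because $\mathcal{L}_{\theta_0}$ has the neutral eigenvalue $0$ with eigenvector $v_{\theta_0}$: the semigroup does not contract the component of $U_N(t)-u_{\theta_0}$ along $v_{\theta_0}$, and over a window of length $C\log N$ the accumulated noise makes the phase drift, so a single linearization point is eventually invalid. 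A related smaller gap: you use $\Vert\psi_t(\rho)-u_{\theta_0}\Vert_2\leq C_1e^{\gamma t/2}$, but Corollary \ref{cor:U_stable} gives only convergence without a rate; the exponential rate itself requires the same linearization-plus-remainder argument.

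The paper resolves this with a two-stage structure that removes the tension entirely. First, on a \emph{fixed}, $N$-independent time window $[0,s_1]$ (with $s_1\propto|\log\varepsilon_0|$), the crude Gr\"onwall/propagation-of-chaos bound is harmless because $e^{cs_1}$ is just a constant; combined with \eqref{eq:hyp_init} and a noise event analogous to \eqref{eq:def_event_AN}, this puts $U_N(s_1)$ within $2\varepsilon_0$ of $\mathcal{U}$. Second, from there one runs an \emph{iterative} scheme over $O(\log N)$ subintervals of fixed length $\widetilde T$ chosen so that $C_PC_0e^{\widetilde T\gamma}\leq 1/4$: on each subinterval one re-projects onto $\mathcal{U}$ (Proposition \ref{prop:phase_proj}), writes the mild equation for $V_n=U_N-u_{\phi_{n-1}}$ with the \emph{updated} phase $\phi_{n-1}$, and uses that $V_n(0)\perp v_{\phi_{n-1}}$ so that \eqref{eq:contraction_Ps_phi} applies; the distance to $\mathcal{U}$ is halved at each step ($h_{n+1}=h_n/2$) until it reaches $N^{2\eta-1/2}$ after $O(\log N)$ steps. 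The re-projection at each step is precisely what neutralizes the kernel direction, and it is the ingredient missing from your single-window plan. Your final deductions (well-definedness of $\psi_0^N$ once the distance is below $\varpi$, and the Lipschitz bound for the phase) are fine, except that the phase accumulates a drift of order $\varepsilon_0$ during the two stages, so one obtains $|\psi_0^N-\theta_0|\leq C\varepsilon_0\leq\varepsilon$ for $\varepsilon_0$ small, not $|\psi_0^N-\theta_0|\to 0$.
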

We define $T_0(N)$ thanks to Proposition \ref{prop:control_initial} by $T_0(N)=C\log(N)$. Define the time discretisation of the interval $[T_0(N),N^\alpha\tau_f]$ into subintervalls of length $T$, $[T_n, T_{n+1}]$: define  $n_f=\inf\{n\in \mathbb{N}, ~ N^\alpha \tau_f \leq T_0(N)+n T\}$ and for $n=0,\cdots, n_f-1$, $T_n=T_0(N)+nT$. Let $T_f(N):=T_{n_f}$, by construction, $T_f(N)\geq N^\alpha\tau_f$. We prove in fact a more precise result that Theorem \ref{thm:UN_close_U} as stated in Remark \ref{rem:speed_thm_closeU}: we show that there exists some $C>0$ such that we have
\begin{equation}\label{eq:UN_U_aux}
\mathbf{P}\left(\sup_{t\in [T_0(N),T_f(N)]}\textnormal{dist}_{L^2}\left( U_N(t),\mathcal{U}\right) \leq  CN^{\eta-1/2} \right) \xrightarrow[N\to\infty]{}1.
\end{equation}
 
We focus on a process $(V_n(t))_{n\in \llbracket 1,n_f\rrbracket, t\in [0,T]}$ that iteratively compares $U_N$ and its projection on $\mathcal{U}$ at each step. We ensure it is correctly defined in the next part, then we give the main proof before the proof of some technical results we also need.

\paragraph{Discretization} In order to define the projection of $U_N(T_n)$ into $\mathcal{U}$, following Proposition \ref{prop:phase_proj}, we need to ensure that $\text{dist}_{L^2}\left(U_N(T_n),\mathcal{U}\right)\leq \varpi$. In order to do so, we introduce the stopping couple
\begin{equation}\label{eq:def_stop_couple}
(n_\tau,\tau):=\inf \left\{ (n,t)\in \llbracket 1,n_f\rrbracket \times [0,T]:  \text{dist}_{L^2}\left(U_N(T_{n-1}+t),\mathcal{U}\right) > \varpi\right\},
\end{equation}
where the infimum corresponds to the lexicographic order. We introduce then
\begin{equation}\label{eq:def_tau_n}
\tau_n:= \left\{
    \begin{array}{ll}
        T& \text{if } n<n_\tau \\
        \tau & \text{if } n\geq n_\tau.
    \end{array}
\right.
\end{equation}
The process we consider is then $\left(U_N(T_{n\wedge n_\tau-1}+t\wedge \tau_n)\right)_{n\in \llbracket 1,n_f\rrbracket, t\in [0,T]}$. The projection of this stopped process is well defined on the whole interval $[T_0(N),T_f(N)]$ by construction, so that we can now define rigorously the random phases $\phi_{n-1}$ for $n=1,\cdots,n_f$ by
\begin{equation}\label{eq:def_phi_n}
\phi_{n-1}:=\text{proj}(U_N(T_{n\wedge n_\tau-1}).
\end{equation}
 The object of interest is then the process $V_n(t)$ of $L^2(S)$ defined for $n=1,\cdots,n_f$ and $t\in [0,T]$ by
\begin{equation}\label{eq:def_Vnt}
V_n(t):=U_N(T_{n\wedge n_\tau-1}+t\wedge \tau_n)-u_{\phi_{n-1}},
\end{equation}
as \eqref{eq:UN_U_aux} translates then into

\begin{prop}\label{prop:control_vn}
There exists an event $\Omega_{N}$ with $\mathbf{P}(\Omega_N)\xrightarrow[N\to\infty]{}1$ such that on $\Omega_N$,
\begin{equation}\label{eq:control_vn}
\sup_{1\leq n \leq n_f} \sup_{t\in[0,T]} \Vert V_n(t)\Vert_{2}  =O\left(\dfrac{N^{2\eta}}{\sqrt{N}}\right),
\end{equation}
where the error is uniform on $\Omega_N$.
\end{prop}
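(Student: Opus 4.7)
The plan is to prove \eqref{eq:control_vn} by an iterative argument on $n \in \llbracket 1, n_f \rrbracket$, the key ingredient being a mild semimartingale decomposition of $V_n$ on each subinterval of length $T$. From \eqref{eq:def_UN} and the fact that $u_{\phi_{n-1}}$ is a stationary solution of \eqref{eq:NFE_specific}, I would derive a decomposition of the form
\begin{equation*}
V_n(t) = V_n(0) + \int_0^t \mathcal{L}_{\phi_{n-1}} V_n(s)\, ds + \int_0^t R_n(s)\, ds + \mathcal{E}_n(t) + M_n(t),
\end{equation*}
where $\mathcal{L}_{\phi_{n-1}}$ is the linearised operator of Definition \ref{def:ops_phi}, $R_n(s)$ collects the quadratic Taylor remainder from expanding $f_{\kappa,\varrho}(U_N) - f_{\kappa,\varrho}(u_{\phi_{n-1}})$ around $u_{\phi_{n-1}}$, the deterministic term $\mathcal{E}_n(t)$ gathers the error in replacing $\frac{2\pi}{N}\sum_j \cos(x_i - x_j)(\cdot)$ by $\int_S \cos(x-y)(\cdot)\,dy$ together with the exponentially decaying leak $e^{-(T_{n\wedge n_\tau-1}+t)}\rho$, and $M_n(t)$ is the martingale built from the compensated Poisson measures in \eqref{eq:def_ZiN}. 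Applying the variation-of-constants formula for the semigroup $(e^{t\mathcal{L}_{\phi_{n-1}}})$ granted by Proposition \ref{prop:spect_L_phi}, one obtains a mild identity
\begin{equation*}
V_n(t) = e^{t\mathcal{L}_{\phi_{n-1}}} V_n(0) + \int_0^t e^{(t-s)\mathcal{L}_{\phi_{n-1}}}\bigl( R_n(s) + \partial_s \mathcal{E}_n(s) \bigr)\, ds + \widetilde{M}_n(t),
\end{equation*}
with $\widetilde{M}_n(t)$ a suitable stochastic convolution of $M_n$.

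The structural observation on which the whole proof pivots is that, by \eqref{eq:def_phi_n} and the defining property of the variational phase in Proposition \ref{prop:phase_proj}, we have $P^\circ_{\phi_{n-1}} V_n(0) = 0$, so that $V_n(0)$ lies entirely in the stable subspace of $\mathcal{L}_{\phi_{n-1}}$. Combining the contraction estimate \eqref{eq:contraction_Ps_phi} with \eqref{eq:op_Lphi_borne2} and the norm equivalence \eqref{eq:cst_C0} then yields
\begin{equation*}
\sup_{t \in [0,T]} \Vert e^{t\mathcal{L}_{\phi_{n-1}}} V_n(0) \Vert_2 \leq C_P\, C_\mathcal{L}\, e^{T\gamma}\, \Vert V_n(0)\Vert_2 \leq \tfrac{1}{4} \Vert V_n(0)\Vert_2,
\end{equation*}
by the calibration of $T$ in \eqref{eq:choice_T}. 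The Taylor remainder satisfies $\Vert R_n(s) \Vert_2 \leq C\, \Vert V_n(s)\Vert_2^2$ thanks to the boundedness of $f''_{\kappa,\varrho}$, and $\mathcal{E}_n$ is of order $1/N$ by the Lipschitz regularity of the cosine kernel on the uniform grid $(x_i)$, plus the leak $O(e^{-T_{n-1}})$ which becomes negligible once $T_0(N) = C\log N$.

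The delicate step is the control of the martingale term uniformly over $n_f = O(N^\alpha)$ subintervals. The plan is to invoke the moment estimates of Proposition \ref{prop:control_noise} to obtain bounds of the form $\mathbf{E}\,\sup_{t \in [0,T]}\Vert \widetilde{M}_n(t)\Vert_2^{2p} \leq C_p\, N^{-p}$ for arbitrarily large $p$, so that a union bound combined with Markov's inequality delivers, on an event $\Omega_N$ with $\mathbf{P}(\Omega_N) \to 1$,
\begin{equation*}
\sup_{1 \leq n \leq n_f}\, \sup_{t \in [0,T]} \Vert \widetilde{M}_n(t) \Vert_2 \leq C\, N^{\eta - 1/2},
\end{equation*}
provided $p$ is chosen large enough in terms of $\alpha$ and $\eta$. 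Evaluating the mild formula at $t = T$ and using that the variational projection is the closest point of $\mathcal{U}$ in the $P^\circ_{\phi_{n-1}}$-orthogonality sense (so that $\Vert V_{n+1}(0) \Vert_2$ is comparable to $\Vert V_n(T)\Vert_2$), one derives a recursive inequality
\begin{equation*}
\Vert V_{n+1}(0) \Vert_2 \leq \tfrac{1}{4}\, \Vert V_n(0) \Vert_2 + C \bigl( \sup_{t \in [0,T]} \Vert V_n(t) \Vert_2^2 + N^{\eta - 1/2} \bigr).
\end{equation*}

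Initialising with $\Vert V_1(0)\Vert_2 \leq N^{2\eta - 1/2}$ from Proposition \ref{prop:control_initial} and iterating, a standard bootstrap absorbs the quadratic term (which is $O(N^{4\eta - 1})$ and hence negligible as $\eta < 1/4$ by \eqref{eq:hyp_eta}), and shows in particular that the stopping couple \eqref{eq:def_stop_couple} is never activated on $\Omega_N$. One obtains $\sup_n \Vert V_n(0)\Vert_2 = O(N^{2\eta - 1/2})$, which re-injected in the mild formula yields \eqref{eq:control_vn}. The main obstacle in this plan is precisely the tension between the number $O(N^\alpha)$ of discrete steps on which a union bound must be taken and the moments of the Hawkes martingale: Proposition \ref{prop:control_noise} must produce moments of arbitrarily high order with bounds scaling as the correct power of $N^{-1}$, since any weaker control would either force $\alpha$ to be small or destroy the gain $N^\eta$ in the final estimate.
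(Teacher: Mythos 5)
Your proposal is correct and follows essentially the same route as the paper: the mild formulation $V_n(t)=e^{(t\wedge\tau_n)\mathcal{L}_{\phi_{n-1}}}V_n(0)+\int_0^{t\wedge\tau_n}e^{(t\wedge\tau_n-s)\mathcal{L}_{\phi_{n-1}}}R_n(s)\,ds+\zeta_n(t\wedge\tau_n)$, the key fact that $P^\circ_{\phi_{n-1}}V_n(0)=0$ so the initial datum sits in the stable subspace and the calibration \eqref{eq:choice_T} of $T$ yields the contraction, the bounds $\Vert R_n\Vert_2\leq C(\Vert V_n\Vert_2^2+N^{-1})$, the high-moment control of the stochastic convolution from Proposition \ref{prop:control_noise} beating the union bound over $n_f=O(N^\alpha)$ intervals, and the bootstrap closing the induction with $\Vert V_{n+1}(0)\Vert_2\leq N^{2\eta-1/2}$. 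The only point treated slightly more loosely than in the paper is the passage from $V_n(T)$ to $V_{n+1}(0)$, where one must decompose $V_{n+1}(0)=(P^\perp_{\phi_n}-P^\perp_{\phi_{n-1}})V_n(T)+P^\perp_{\phi_{n-1}}V_n(T)+P^\perp_{\phi_n}(u_{\phi_{n-1}}-u_{\phi_n})$ and use the smoothness of $\phi\mapsto P^\perp_\phi$ together with $|\phi_n-\phi_{n-1}|=O(N^{2\eta-1/2})$ to see that the correction terms are $O(N^{4\eta-1})$; your recursive inequality encodes exactly this.
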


Here are the steps of the proof of Proposition \ref{prop:control_vn}.

\paragraph{Step 1 -}
We show that the process $(V_n(t))_{n\in \llbracket 1,n_f\rrbracket, t\in [0,T]}$ satisfies the mild equation
\begin{equation}\label{eq:mild_form_vn}
V_n(t)=e^{(t\wedge \tau_n)\mathcal{L}_{\phi_{n-1}}}V_{n}(0) + \int_{0}^{t\wedge \tau_n} e^{(t\wedge \tau_n-s)\mathcal{L}_{\phi_{n-1}}}R_n(s)ds+ \zeta_{n}(t\wedge \tau_n)
\end{equation}
where
\begin{equation}\label{eq:def_zeta_n}
\zeta_{n}(t):= \int_{0}^{t} e^{(t-s)\mathcal{L}_{\phi_{n-1}}}dM_N(s),
\end{equation}
and 
\begin{multline}\label{eq:def_R_n}
R_{n}(t)= \cos\ast\left(y\mapsto V_n(t)(y)^2 \int_0^1 f ''\left( u_{\phi_{n-1}}(y)+rV_n(t)(y)\right)(1-r)dr \right)
\\+ \left(\sum_{i,j=1}^N \dfrac{2\pi\cos(x_i-x_j)}{N} f (U_{N,j}(t-))\mathbf{1}_{B_{N,i}}- \cos\ast f (U_{N}(t))\right),
\end{multline}
where the notation $\ast$ stands for the convolution  $f*g(x)=\int_{-\pi}^\pi f(x-y)g(y)dy$. The rigorous meaning of \eqref{eq:mild_form_vn} is given in Proposition \ref{prop:mild_tildeU}, postponed to Section \ref{S:mild}.

\paragraph{Step 2 -} We show a control of several terms of \eqref{eq:mild_form_vn} with the following Proposition, whose proof is postponed to Section \ref{S:noise}.

\begin{prop}[Noise perturbation]\label{prop:control_noise} Define the event 
\begin{equation}
A_N:=\left\{\sup_{1\leq n \leq n_f} \sup_{t\in[0,T]} \left\Vert \zeta_{n}(t) \right\Vert_{2}  \leq \dfrac{N^\eta}{\sqrt{N}}\right\}.\label{eq:def_event_AN}
\end{equation}
In the framework of Theorem \ref{thm:UN_close_U}, $\mathbf{P}(A_N)\xrightarrow[N\to\infty]{}1$.
\end{prop}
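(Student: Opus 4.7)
My plan is to derive high-moment estimates on $\Vert \zeta_n(t)\Vert_2$ that are uniform in $n$, $t$ and in the random phase $\phi_{n-1}$, and then close the argument with a Markov inequality combined with a union bound over the polynomially many intervals $[T_{n-1},T_n]$. The key inputs are the uniform-in-$\phi$ semigroup bound \eqref{eq:op_Lphi_borne1} from Proposition \ref{prop:spect_L_phi} and the sigmoid boundedness $\Vert f\Vert_\infty \leq 1$.

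First I would identify $M_N$ explicitly as the compensated Poisson part of the semimartingale decomposition of \eqref{eq:def_UN}: with $\widetilde Z_{N,j}(t):=Z_{N,j}(t)-\int_0^t f(U_{N,j}(s-))ds$ the compensated counting process of neuron $j$, one has
\begin{equation*}
dM_N(t)(x)=\sum_{i=1}^N \mathbf{1}_{B_{N,i}}(x)\cdot \frac{2\pi}{N}\sum_{j=1}^N \cos(x_i-x_j)\, d\widetilde Z_{N,j}(t),
\end{equation*}
so that $\zeta_n$ of \eqref{eq:def_zeta_n} is a sum of $N$ independent stochastic convolutions against Poisson martingales. By the It\^o isometry for compensated Poisson integrals, \eqref{eq:op_Lphi_borne1}, and the observation that the intervals $(B_{N,i})$ are disjoint of length $2\pi/N$, a direct computation gives $\bigl\Vert \tfrac{2\pi}{N}\sum_i \cos(x_i-x_j)\mathbf{1}_{B_{N,i}}\bigr\Vert_2^2 = O(N^{-2})$, so that for any $n$ and $t\in[0,T]$, $\mathbf{E}\Vert \zeta_n(t)\Vert_2^2\leq CT/N$, uniformly in the (random, $\mathcal{F}_{T_{n\wedge n_\tau-1}}$-measurable) phase $\phi_{n-1}$.

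To upgrade to the $L^{2p}$-level with supremum over $t\in[0,T]$, I would follow the strategy of \cite{agathenerine_longtime_arxiv}, either applying a Burkholder--Davis--Gundy inequality for Hilbert-valued purely discontinuous martingales via the factorization method for stochastic convolutions (which is licit since $\mathcal{L}_{\phi_{n-1}}$ generates an analytic semigroup, by Proposition \ref{prop:spect_L_phi}), or, alternatively, discretizing $[0,T]$ on a grid of $O(N^\beta)$ points and controlling the oscillations of $\zeta_n$ between grid points via $L^{2p}$-estimates on the increments. Either route yields, for every integer $p\geq 1$,
\begin{equation*}
\mathbf{E}\sup_{t\in[0,T]}\Vert \zeta_n(t)\Vert_2^{2p}\leq \frac{C_{p,T}}{N^p},
\end{equation*}
uniformly in $n\in\llbracket 1,n_f\rrbracket$ and in $\phi_{n-1}$.

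A Markov inequality with exponent $2p$ then gives $\mathbf{P}\bigl(\sup_{t\in[0,T]}\Vert \zeta_n(t)\Vert_2>N^\eta/\sqrt{N}\bigr)\leq C_{p,T}/N^{2p\eta}$, and a union bound over the at most $n_f = O(N^\alpha)$ subintervals yields $\mathbf{P}(A_N^c)\leq C_{p,T}\, N^{\alpha-2p\eta}\to 0$ as $N\to\infty$, provided $p$ is chosen so that $2p\eta>\alpha$, which is always possible for any fixed $\alpha,\eta>0$. The main obstacle is the high-moment step: $\zeta_n$ is not itself a martingale in $t$ (because of the time-dependent semigroup factor), so Doob's maximal inequality cannot be applied directly; this is where the factorization/stochastic Fubini argument comes in, and one exploits crucially that $\phi_{n-1}$ is frozen on $[T_{n-1},T_n]$ so that all constants of Proposition \ref{prop:spect_L_phi} may be used pathwise, and that these constants are uniform in $\phi\in S$.
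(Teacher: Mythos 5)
Your overall architecture matches the paper's: obtain a moment bound $\mathbf{E}\sup_{t\leq T}\Vert\zeta_n(t)\Vert_2^{2m}\leq C/N^m$, uniform in $n$ and in the frozen (random but $\mathcal{F}_{T_{n\wedge n_\tau-1}}$-measurable) phase $\phi_{n-1}$ thanks to the $\phi$-uniform constants of Proposition~\ref{prop:spect_L_phi}, then conclude via Markov's inequality and a union bound over the $n_f=O(N^\alpha)$ blocks with $m$ chosen so that $2m\eta>\alpha$; the estimate $\Vert\chi_j(s,z)\Vert_2^2\leq CN^{-2}\mathbf{1}_{z\leq\lambda_{N,j}(s)}$ that drives it is also the paper's. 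Where you genuinely diverge is on the mechanism for the maximal inequality. You correctly flag that $\zeta_n$ is a stochastic convolution and not a martingale, and propose either the factorization method for stochastic convolutions or a time-grid discretization. The paper instead adapts the device of \cite{Zhu2017}: since $\zeta_n$ solves $d\zeta_n=\mathcal{L}_{\phi_{n-1}}\zeta_n\,dt+dM_N$, apply It\^o's formula to $v\mapsto\Vert v\Vert_2^{2m}$ and observe that the drift contribution $\int_0^t 2m\Vert\zeta_n(s)\Vert_2^{2m-2}\langle\zeta_n(s),\mathcal{L}_{\phi_{n-1}}\zeta_n(s)\rangle\,ds$ is nonpositive by the nonpositivity of the spectrum of $\mathcal{L}_{\phi_{n-1}}$ (via Lumer--Phillips). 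This kills the semigroup factor outright and leaves only genuine martingale terms, to which Burkholder--Davis--Gundy and Young's inequality apply directly; it sidesteps both the time-regularity issues that make the factorization method delicate for compensated Poisson convolutions and the bookkeeping of a discretization grid. Your two alternatives do not require the sign condition $\langle\zeta,\mathcal{L}_\phi\zeta\rangle\leq 0$ and only use analyticity, so they would remain available for a linearization with an unstable eigendirection (after projecting it off), but in the present dissipative setting the paper's trick gives the shorter path.
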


Now let $\Omega_N:=A_N\cup B_N$ (recall $B_N$ from Proposition \ref{prop:control_initial}) , we have $\mathbf{P}(\Omega_N)\xrightarrow[N\to\infty]{}1$ with Propositions \ref{prop:control_noise} and \ref{prop:control_initial}. For the rest of the proof, we place ourselves now on this event $\Omega_N$.

\paragraph{Step 3 -} Based on Steps 1 and 2 above, it remains to prove \eqref{eq:control_vn}. We proceed by induction. We know (as $\Omega_N\subset B_N$) that $\Vert V_1(0)\Vert \leq N^{2\eta-1/2}$. Suppose that $\Vert V_n(0)\Vert_{2} \leq N^{2\eta-1/2}$ for some $n\geq 1$. From the mild formulation satisfied by $(V_n(t))$ seen in  \eqref{eq:mild_form_vn} we get
$$\Vert V_n(t) \Vert_{2}= \left\Vert e^{(t\wedge \tau_n)\mathcal{L}_{\phi_{n-1}}}V_{n}(0)\right\Vert_{2} + \left\Vert \int_{0}^{t\wedge \tau_n} e^{(t\wedge \tau_n-s)\mathcal{L}_{\phi_{n-1}}}R_n(s)ds\right\Vert_{2}+ \left\Vert \zeta_{n}(t\wedge \tau_n)\right\Vert_{2}.$$

Recall \eqref{eq:def_phi_n} and Proposition \ref{prop:phase_proj}, by definition of the phase projection, $P_{\phi_{n-1},0}\left(U_N(T_{n\wedge n_\tau-1})-u_{\phi_{n-1}}\right)=0$ hence $V_n(0)=U_N(T_{n\wedge n_\tau-1})-u_{\phi_{n-1}}= P_{\phi_{n-1},s} V_n(0)$. Proposition \ref{prop:spect_L_phi} and more especially \eqref{eq:contraction_Ps_phi} give then, with the induction hypothesis
$$\Vert e^{(t\wedge \tau_n)\mathcal{L}_{\phi_{n-1}}} V_n(0)\Vert_{\phi_{n-1}} \leq e^{(t\wedge \tau_n)\gamma}\Vert V_n(0)\Vert_{\phi_{n-1}}\leq C_{0} e^{(t\wedge \tau_n)\gamma}N^{2\eta-\frac{1}{2}}$$
where $C_0$ is introduced in \eqref{eq:cst_C0}. From Proposition \ref{prop:spect_L_phi}, we have
$$\left\Vert \int_{0}^{t\wedge \tau_n} e^{(t\wedge \tau_n-s)\mathcal{L}_{\phi_{n-1}}}R_n(s)ds\right\Vert_{2}\leq TC_\mathcal{L}  \sup_{0\leq s\leq T} \Vert R_n(s)\Vert_{2}.$$ By definition of $A_N$, $\sup_{1\leq n \leq n_f} \sup_{t\in[0,T]} \left\Vert \zeta_{n}(t) \right\Vert_2 \leq N^{\eta-1/2}$ as we are on $\Omega_N$. We obtain then, for any $t\in [0,T]$
\begin{equation}\label{eq:aux_mild_gen}
\Vert V_n(t) \Vert_{2}\leq C_0e^{(t\wedge \tau_n)\gamma}N^{2\eta-\frac{1}{2}} + TC_\mathcal{L} \sup_{0\leq s \leq T}\left\Vert R_n(s)\right\Vert_{2}+ N^{\eta-1/2}.
\end{equation}

For any $t\in [0,T]$, recalling \eqref{eq:def_R_n}, 
\begin{align}
\sup_{0\leq s \leq t} \Vert R_{n}(s)\Vert_{2} &\leq \sup_{0\leq s \leq t} \left\Vert \cos\ast\left(y\mapsto V_n(s)(y)^2 \int_0^1 f ''\left( u_{\phi_{n-1}}(y)+rV_n(s)(y)\right)(1-r)dr \right)\right\Vert_{2}\notag\\
&+\sup_{0\leq s \leq t}  \left\Vert\sum_{i,j=1}^N \dfrac{2\pi\cos(x_i-x_j)}{N} f (U_{N,j}(s-))\mathbf{1}_{B_{N,i}}- \cos\ast f (U_{N}(s))\right\Vert_{2} = (A) + (B). \label{eq:induction_defAB}
\end{align}
Using Young's inequality $\Vert u \ast v \Vert_2 \leq \Vert u \Vert_1 \Vert v \Vert_2$ and the boundedness of $f''$, we have
$$(A) \leq \sup_{0\leq s \leq t} \left( \Vert \cos \Vert_2 \int_S \left\vert V_n(s)(y)^2 \int_0^1 f ''\left( u_{\phi_{n-1}}(y)+rV_n(s)(y)\right)(1-r)dr \right\vert dy\right) \leq C\sup_{0\leq s \leq t} \Vert V_n(s) \Vert_{2}^2$$
for some positive $C$. For the second term $(B)$ of \eqref{eq:induction_defAB}, we introduce
\begin{align}
\Upsilon_{1,i,s} &= \dfrac{2\pi}{N}\sum_{j=1}^N \cos(x_i-x_j) \left( f(U_{N,j}(s-))-f(U_{N,j}(s)) \right)\notag\\
\Upsilon_{2,i,s} &= \dfrac{2\pi}{N}\sum_{j=1}^N \cos(x_i-x_j)f(U_{N,j}(s))-\int_S \cos(x_i-y)f(U_N(s)(y))dy\notag\\
\Upsilon_{3,i,s}(x) &=\int_S\left(  \cos(x_i-y)-\cos(x-y)\right)f(U_N(s)(y))dy, \quad x\in S. \label{eq:def_Upsilon_i}
\end{align}
From the Lipschitz continuity of $f$ and the fact that $Z_{N,1},\cdots,Z_{N,N}$ do not jump simultaneously, $\vert \Upsilon_{1,i,s} \vert\leq \dfrac{C}{N}$ hence $\left\Vert \sum_{i=1}^N\Upsilon_{1,i,s}\mathbf{1}_{B_{N,i}} \right\Vert_{2}^2=O\left(\dfrac{1}{N^2}\right)$. As $\mathbf{1}_{B_{N,i}}\mathbf{1}_{B_{N,j}}\equiv 0$ for $i\neq j$, for any $0\leq s \leq t$ we have
$$\left\Vert \sum_{i=1}^N \Upsilon_{2,i,s}\mathbf{1}_{B_{N,i}} \right\Vert_{2}^2= \dfrac{2\pi}{N} \sum_{i=1}^N \left(\sum_{j=1}^N \int_{B_{N,j}} \left( \cos(x_i-x_j)-\cos(x_i-y)\right) f(U_N(s)(y))dy \right)^2.
$$
As $f$ is bounded (by 1) and $\cos$ is 1-Lipschitz continuous, we obtain 
\begin{equation}\label{eq:norme_Upsilon_2}
\left\Vert \sum_{i=1}^N \Upsilon_{2,i,s}\mathbf{1}_{B_{N,i}} \right\Vert_{2}^2\leq \dfrac{2\pi}{N} \sum_{i=1}^N \left(\sum_{j=1}^N \int_{B_{N,j}} \vert x_j-y\vert dy \right)^2\leq \dfrac{8\pi^5}{N^2}.
\end{equation}
Similarly,
\begin{align}\label{eq:norme_Upsilon_3}
\left\Vert \sum_{i=1}^N \Upsilon_{3,i,s}\mathbf{1}_{B_{N,i}} \right\Vert_{2}^2&= \int_S \sum_{i=1}^N \Upsilon_{3,i,s}(x)^2\mathbf{1}_{B_{N,i}}(x)dx\notag\\
&= \sum_{i=1}^N \int_{B_{N,i}} \left(\int_S\left(  \cos(x_i-y)-\cos(x-y)\right)f(U_N(s)(y))dy \right)^2dx \notag\\
&\leq  \sum_{i=1}^N \int_{B_{N,i}} \left(\int_S \vert x_i-x\vert dy \right)^2dx \leq  \dfrac{8\pi^5}{N^2}.
\end{align}
Hence we have for some positive $C_{R,1}$
\begin{equation}\label{eq:maj_RN}
\sup_{0\leq s \leq t} \Vert R_{n}(s)\Vert_{2} \leq C_{R,1} \left( \sup_{0\leq s \leq t} \Vert V_n(s) \Vert_{2}^2 + \dfrac{1}{N}\right).
\end{equation}
Define then $t^*$ as
\begin{equation}\label{def:time_t*}
t^*:= \inf\left\{ t\in [0,T]: \Vert V_n(t) \Vert_{2} \geq  2C_0\dfrac{N^{2\eta}}{\sqrt{N}}\right\}.
\end{equation}
Note that with no loss of generality, one can assume that $C_0>1$. Since by assumption $\Vert V_n(0)\Vert_2\leq \dfrac{N^{2\eta}}{\sqrt{N}}<C_0 \dfrac{N^{2\eta}}{\sqrt{N}}$, we have $\Vert V_n(t)\Vert_2\leq 2C_0 \dfrac{N^{2\eta}}{\sqrt{N}}$ at least for $t<t_1$ where $t_1$ is the first jump among $\left(Z_{N,1},\cdots,Z_{N,N}\right)$. Hence $t^*>0$. If $t\leq t^*$, $\sup_{0\leq s \leq t} \Vert R_{n}(s)\Vert_{2} \leq C_{R,2} N^{4\eta-1}$ (as $\eta>0$, $N^{-1}\ll N^{4\eta-1}$). Coming back to \eqref{eq:aux_mild_gen}, we obtain that (for some positive constant $C_R$)
\begin{equation}\label{eq:aux_mild_gen2}
\Vert V_n(t) \Vert_{2}\leq C_0e^{(t\wedge \tau_n)\gamma}N^{2\eta-\frac{1}{2}} + TC_R  N^{4\eta-1}+ N^{\eta-1/2}.
\end{equation}
Since $0<\eta<\dfrac{1}{4}$, $ N^{4\eta-1}\ll N^{2\eta-1/2}$ hence for $N$ large enough $TC_R  N^{4\eta-1}+ N^{\eta-1/2}\leq C_0 N^{2\eta-1/2}$ thus as $\gamma<0$, $t^*=T$. By construction of the stopping time $\tau_n$ in \eqref{eq:def_tau_n}, we have then that $\tau_n=T$, hence 
\begin{equation}\label{eq:ite_vn}
\sup_{0\leq t\leq T} \Vert V_n(t)\Vert_{2} \leq 2C_0N^{2\eta-1/2}
.\end{equation}
To conclude the induction, we need to show that $\Vert V_{n+1}(0)\Vert_{2}\leq N^{2\eta-1/2}$. By definition \eqref{eq:def_Vnt} and as $\tau_n=T$, $V_{n+1}(0)= U_N(T_n)-u_{\phi_n}$ and $V_n(T)=U_N(T_n)-u_{\phi_{n-1}}$ hence $V_{n+1}(0)=V_n(T)+u_{\phi_{n-1}}-u_{\phi_n}$. Moreover, as $V_{n+1}(0)=P_{\phi_n}^\perp V_{n+1}(0)$ since by definition $V_{n+1}(0)\in \text{Ker}\left( \mathcal{L}_{\phi_n}\right)^\perp$ (recall Proposition \ref{prop:spect_L_phi}), we obtain 
\begin{align}
V_{n+1}(0) &= P_{\phi_n}^\perp \left(V_n(T)+u_{\phi_{n-1}}-u_{\phi_n}\right)\notag\\
&=  \left(P_{\phi_n}^\perp -P_{\phi_{n-1}}^\perp \right)V_n(T)+P_{\phi_{n-1}}^\perp V_n(T) +P_{\phi_n}^\perp \left( u_{\phi_{n-1}}-u_{\phi_n}\right).\label{eq:ite_vn_+1}
\end{align}
We are going to control each term of \eqref{eq:ite_vn_+1}. First, using the smoothness of the phase projection from Proposition \ref{prop:phase_proj}, 
\begin{align}
\vert \phi_{n-1}-\phi_n \vert &= \vert \text{proj}\left( U_N\left(T_{(n-1)\wedge n_\tau -1}\right)\right) - \text{proj}\left( U_N\left(T_{n-\wedge n_\tau -1}\right)\right)\vert\notag\\
&\leq C_\text{proj} \left\Vert  U_N\left(T_{(n-1)\wedge n_\tau -1}\right) - U_N\left(T_{n-\wedge n_\tau -1} \right)\right\Vert_{2}\notag\\
&\leq C_\text{proj} \left\Vert  V_{n-1}(0) - V_{n-1}(T)\right\Vert_{2}\leq C N^{2\eta-1/2},\label{eq:control_phi}
\end{align}
using \eqref{eq:ite_vn}. Recall \eqref{eq:def_U_circle} and \eqref{eq:def_vphi}, we have for any $x\in S$
\begin{align*}
u_{\phi_{n-1}}(x)-u_{\phi_n}(x) &= A \cos\left(x+\phi_{n-1}\right) -A \cos\left(x+\phi_{n}\right)\\
&= -2A \sin\left( \phi_{n-1}-\phi_n\right) \sin\left( x+\phi_n + \dfrac{ \phi_{n-1}-\phi_n}{2}\right)\\
&= 2  \sin\left( \phi_{n-1}-\phi_n\right) \left(\cos \left(  \dfrac{ \phi_{n-1}-\phi_n}{2}\right) v_{\phi_n}(x)-\sin \left(  \dfrac{ \phi_{n-1}-\phi_n}{2}\right) u_{\phi_n}(x) \right)
\end{align*}
thus, as $P_{\phi_n}^\perp v_{\phi_n}=0$,
$$ P_{\phi_n}^\perp \left(u_{\phi_{n-1}}-u_{\phi_n}\right) = -2 \sin\left( \phi_{n-1}-\phi_n\right))\sin \left(  \dfrac{ \phi_{n-1}-\phi_n}{2}\right) P_{\phi_n}^\perp u_{\phi_n}.$$
As $u_{\phi_n}$ is bounded and $\sin$ is Lipschitz continuous, we obtain with \eqref{eq:control_phi} a control of the third term of \eqref{eq:ite_vn_+1} 
\begin{equation}\label{eq:control_P_u_phi}
\Vert P_{\phi_n}^\perp \left( u_{\phi_{n-1}}-u_{\phi_n}\right)\Vert_{2}\leq C \left(\phi_{n-1}-\phi_n\right)^2 =  O(N^{4\eta-1}).
\end{equation}
Similarly, recall \eqref{eq:def_proj_stable}, $\phi\mapsto P_{\phi}^\perp $ is smooth, hence for some $C>0$
\begin{equation}\label{eq:control_P_v}
\left\Vert  \left(P_{\phi_n}^\perp -P_{\phi_{n-1}}^\perp \right)V_n(T)\right\Vert_{2}\leq C \vert \phi_{n-1}-\phi_n \vert \Vert V_n(T)\Vert= O(N^{4\eta-1}).
\end{equation}
Combining \eqref{eq:control_P_u_phi} and \eqref{eq:control_P_v} in \eqref{eq:ite_vn_+1}, using \eqref{eq:ite_vn} at time $t=T$ and recalling Proposition \ref{prop:spect_L_phi}, we obtain for $N$ large enough
$$\Vert V_{n+1}(0)\Vert_{2}\leq \Vert P_{\phi_{n-1}}^\perp V_n(T) \Vert_{2} + O(N^{4\eta-1})\leq 2C_PC_0 e^{T\gamma} N^{2\eta-1/2}+ O(N^{4\eta-1}).
$$
From the choice of $T$ satisfying \eqref{eq:choice_T}, the fact that $\Vert V_{n+1}(0)\Vert_{2}\leq N^{2\eta-1/2}$ follows and the recursion is concluded, so that Theorem \ref{thm:UN_close_U} follows.

\subsection{About the mild formulation} \label{S:mild}

\textbf{Step 1} of Section \ref{S:main_structure_close} is a direct consequence of the following proposition.

\begin{prop}\label{prop:mild_tildeU} Fix $\phi\in S$ and $0<t_a<t_b$. Recall the definition of $U_N$ in \eqref{eq:def_UN}, and define, for any $t\in [t_a,t_b]$, 
\begin{equation}\label{eq:def_tildeU}
\widetilde{U}_{N,\phi}(t)=U_N(t)-u_{\phi}.
\end{equation}
The process $\left(\widetilde{U}_{N,\phi}(t)\right)_{t\in[t_a,t_b]}$ satisfies  the following semimartingale decomposition in $D([t_a,t_b],L^2(S))$, written in a mild form:  for any $t_a\leq t\leq t_b$
\begin{equation}\label{eq:mild_form_tildeU}
\widetilde{U}_{N,\phi}(t)=e^{(t-t_a)\mathcal{L}_{\phi}}\widetilde{U}_{N,\phi}(t_a) + \int_{t_a}^{t} e^{(t-s)\mathcal{L}_{\phi}}r_{N,\phi}(s)ds+\int_{t_a}^{t} e^{(t-s)\mathcal{L}_{\phi}} dM_N(s),
\end{equation}
with
\begin{equation}\label{eq:def_M_N}
M_N(t)= \sum_{i=1}^N \sum_{j=1}^N \dfrac{2\pi\cos(x_i-x_i)}{N} \left( Z_{N,j}(t) - \int_0^{t}\lambda_{N,j}(s)ds\right) \mathbf{1}_{B_{N,i}}
\end{equation}
and
\begin{multline}\label{eq:def_rN}
r_{N,\phi}(t)= \cos\ast\left(y\mapsto \widetilde{U}_{N,\phi}(t)(y)^2 \int_0^1 f ''\left( u_{\phi}(y)+r\widetilde{U}_{N,\phi}(t)(y)\right)(1-r)dr \right)
\\+ \left(\sum_{i,j=1}^N \dfrac{2\pi\cos(x_i-x_j)}{N} f (U_{N,j}(t-))\mathbf{1}_{B_{N,i}}- \cos\ast f (U_{N}(t))\right).
\end{multline}
\end{prop}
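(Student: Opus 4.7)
The strategy is standard: derive the pointwise (in $x$) semimartingale dynamics of $U_N(t)$, rewrite the drift as $\mathcal{L}_\phi \widetilde{U}_{N,\phi}(t)$ plus controllable remainders, and then apply variation of constants. Since $\mathcal{L}_\phi$ is a bounded operator on $L^2(S)$ (it is a compact perturbation of $-\mathrm{Id}$, see Definition \ref{def:ops_phi} and Proposition \ref{prop:spect_L_phi}), all the semigroup manipulations are elementary.

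First I would differentiate \eqref{eq:def_UiN} (or equivalently apply It\^o's formula to the process $t \mapsto e^{t} U_{N,i}(t)$) to obtain, for each $i$,
\begin{equation*}
dU_{N,i}(t) = -U_{N,i}(t)\, dt + \frac{2\pi}{N}\sum_{j=1}^N \cos(x_i-x_j)\, dZ_{N,j}(t),
\end{equation*}
and then lift this to the spatial profile $U_N(t) = \sum_i U_{N,i}(t) \mathbf{1}_{B_{N,i}}$ defined in \eqref{eq:def_UN}. Writing $dZ_{N,j}(t) = \lambda_{N,j}(t)\, dt + dM_{N,j}(t)$ with $\lambda_{N,j}(t) = f(U_{N,j}(t-))$, the compensator piece assembles exactly into the drift $\sum_{i,j} \frac{2\pi \cos(x_i-x_j)}{N} f(U_{N,j}(t-))\mathbf{1}_{B_{N,i}}$, while the martingale pieces assemble into $M_N(t)$ as defined in \eqref{eq:def_M_N}. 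The result at this stage is the $L^2(S)$-valued semimartingale
\begin{equation*}
dU_N(t) = -U_N(t)\, dt + \sum_{i,j=1}^N \frac{2\pi \cos(x_i-x_j)}{N} f(U_{N,j}(t-))\mathbf{1}_{B_{N,i}}\, dt + dM_N(t).
\end{equation*}

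Next I would linearise around the stationary profile $u_\phi$. Adding and subtracting $\cos * f(U_N(t))$ isolates the discretisation error that is already present in the second line of \eqref{eq:def_rN}. For the remaining term, since $u_\phi$ is a stationary solution of \eqref{eq:NFE_specific}, $-u_\phi + \cos * f(u_\phi) = 0$, so that subtracting $u_\phi$ from $U_N(t)$ and writing
\begin{equation*}
f(u_\phi + \widetilde{U}_{N,\phi}) = f(u_\phi) + f'(u_\phi)\widetilde{U}_{N,\phi} + \widetilde{U}_{N,\phi}^2 \int_0^1 f''(u_\phi + r\widetilde{U}_{N,\phi})(1-r)\, dr
\end{equation*}
(Taylor with integral remainder) yields, after recognising $\cos * (f'(u_\phi)\psi) = T_\phi \psi$,
\begin{equation*}
d\widetilde{U}_{N,\phi}(t) = \mathcal{L}_\phi \widetilde{U}_{N,\phi}(t)\, dt + r_{N,\phi}(t)\, dt + dM_N(t),
\end{equation*}
with $r_{N,\phi}$ precisely as in \eqref{eq:def_rN}. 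This is the strong $L^2$-valued SDE underlying \eqref{eq:mild_form_tildeU}. The variation of constants formula then gives the mild form directly: multiplying by $e^{-t\mathcal{L}_\phi}$, the left-hand side becomes $d(e^{-t\mathcal{L}_\phi}\widetilde{U}_{N,\phi}(t)) = e^{-t\mathcal{L}_\phi}(r_{N,\phi}(t)\, dt + dM_N(t))$, integrating from $t_a$ to $t$ and multiplying back by $e^{t\mathcal{L}_\phi}$ produces \eqref{eq:mild_form_tildeU}.

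The only technical point is interpreting the stochastic convolution $\int_{t_a}^t e^{(t-s)\mathcal{L}_\phi}\, dM_N(s)$ as a c\`adl\`ag $L^2(S)$-valued process. This is not a serious obstacle because $M_N$ is a pure-jump martingale taking values in the finite-dimensional subspace spanned by $(\mathbf{1}_{B_{N,i}})_i$, with explicit jumps of size $\frac{2\pi}{N}\sum_i \cos(x_i - x_j)\mathbf{1}_{B_{N,i}}$ at each jump of $Z_{N,j}$. Hence the stochastic integral reduces to the finite sum $\sum_{s \leq t}e^{(t-s)\mathcal{L}_\phi}\Delta M_N(s)$, minus the absolutely convergent compensator, so no infinite-dimensional stochastic calculus is needed. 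Once this observation is made, the identification of \eqref{eq:mild_form_tildeU} with the It\^o equation derived above is immediate, and uniqueness follows from Gr\"onwall applied to the bounded semigroup estimate \eqref{eq:op_Lphi_borne1}.
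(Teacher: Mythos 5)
Your proposal is correct and follows essentially the same route as the paper: derive the strong $L^2(S)$-valued semimartingale equation $d\widetilde{U}_{N,\phi}(t)=\mathcal{L}_{\phi}\widetilde{U}_{N,\phi}(t)dt+r_{N,\phi}(t)dt+dM_N(t)$ via the compensator decomposition of $dZ_{N,j}$, the stationarity of $u_\phi$ and Taylor's formula with integral remainder, and then pass to the mild form. The only difference is the final step, where the paper invokes Lemma 3.2 of \cite{Zhu2017} to identify the strong and mild formulations, whereas you justify the variation-of-constants identity directly from the boundedness of $\mathcal{L}_\phi$ and the finite-activity, finite-dimensional pure-jump structure of $M_N$ --- an equally valid and arguably more self-contained argument.
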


\begin{proof}[Proof of Proposition \ref{prop:mild_tildeU}]
From \eqref{eq:def_UiN}, we obtain that $U_N$ verifies
\begin{equation}
\label{eq:dUN}
dU_N(t)=-U_N(t)dt+\sum_{i,j=1}^N \dfrac{2\pi\cos(x_i-x_j)}{N}dZ_{N,j}(t)\mathbf{1}_{B_{N,i}}.
\end{equation}
The centered noise  $M_N$ defined in \eqref{eq:def_M_N} verifies $$dM_{N}(t):= \sum_{i=1}^N \sum_{j=1}^N \dfrac{2\pi\cos(x_i-x_j)}{N} \left( dZ_{N,j}(t) - f (U_{N,j}(t-))dt\right) \mathbf{1}_{B_{N,i}},$$ and is a martingale in $L^2(S)$. Thus recalling that $u_{\phi}$ solves \eqref{eq:NFE_stat_gen} and by inserting the terms $\displaystyle \sum_{i=1}^N \sum_{j=1}^N \dfrac{2\pi\cos(x_i-x_j)}{N}f (U_{N,j}(t-))dt\mathbf{1}_{ B_{N,i}}$ and $u_{\phi}$, we obtain 
$$d\widetilde{U}_{N,\phi}(t)=-\widetilde{U}_{N,\phi}(t)dt + dM_N(t) + \left(\sum_{i,j=1}^N \dfrac{2\pi\cos(x_i-x_j)}{N} f (U_{N,j}(t-))\mathbf{1}_{B_{N,i}}-\int_{-\pi}^\pi \cos(\cdot-y)f (u_{\phi}(y))dy\right)dt.$$
A Taylor's expansion gives that for any $y\in S$,
$$f (U_N(t)(y))-f (u_{\phi}(y))=f '(u_{\phi}(y))\widetilde{U}_{N,\phi}(t)(y)+\int_0^1 f ''\left( u_{\phi}(y)+r\widetilde{U}_{N,\phi}(t)(y)\right)(1-r)dr \widetilde{U}_{N,\phi}(t)(y)^2,$$
hence identifying the operator $\mathcal{L}_{\phi}$ defined in  \eqref{eq:def_L_phi} we have
\begin{multline*}
d\widetilde{U}_{N,\phi}(t)= \mathcal{L}_{\phi}\widetilde{U}_{N,\phi}(t)dt + dM_N(t) + \int_{-\pi}^\pi \cos(\cdot - y)\int_0^1 f ''\left( u_{\phi}(y)+r\widetilde{U}_{N,\phi}(t)(y)\right)(1-r)dr \widetilde{U}_N(t)(y)^2dydt\\
 + \left(\sum_{i,j=1}^N \dfrac{2\pi\cos(x_i-x_j)}{N} f (U_{N,j}(t-))\mathbf{1}_{B_{N,i}}-\int_{-\pi}^\pi \cos(\cdot-y) f (U_{N}(t)(y))\right)dt,
\end{multline*}
and recognizing $r_{N,\phi}$ defined in \eqref{eq:def_rN} we have
\begin{equation}\label{eq:def_dUtildeN_mild}
d\widetilde{U}_{N,\phi}(t) = \mathcal{L}_{\phi}\widetilde{U}_{N,\phi}(t)dt + r_{N,\phi}(t)dt + dM_N(t).
\end{equation} 
Then the mild formulation  \eqref{eq:mild_form_tildeU} is a direct consequence of Lemma 3.2 of \cite{Zhu2017}: the unique strong solution to \eqref{eq:def_dUtildeN_mild} is indeed given by \eqref{eq:mild_form_tildeU}.
\end{proof}

\subsection{About the initialisation}\label{S:initialisation}

We prove here Proposition \ref{prop:control_initial}, that we use to define the initial time $T_0(N)$ and in the second part of \textbf{Step 2} of Section \ref{S:main_structure_close}. 

\begin{proof}[Proof of Proposition \ref{prop:control_initial}]
To prove Proposition \ref{prop:control_initial}, we proceed in several steps, as done in  \cite{lucon_poquet2017}[Proposition 2.9].
\begin{itemize}
\item[\textbf{Step a.}] We rely on the convergence in finite time of $U_N$ to its large population limit, that is $u_t$ solving \eqref{eq:NFE_specific} with initial condition $\rho$. From the deterministic behavior of $u_t$ and the stability of $\mathcal{U} $, $U_N$ approaches $\mathcal{U}$ in a $2\varepsilon_0$-neighborhood; and this takes a time interval of order $\vert\log\varepsilon_0\vert$. 
\item[\textbf{Step b.}]  We rely on the stability of $\mathcal{U}$ and the control ofn the noise to show that, from a $2\varepsilon_0$-neighborhood, $U_N$ approaches $\mathcal{U}$ in a $N^{2\eta-1/2}$-neighborhood; and this takes a time interval of order $\log N$. 
\item[\textbf{Step c.}]  We ensure that $U_N$ stays at distance $N^{2\eta-1/2}$ from $\mathcal{U}$ at time $T_0(N)$.
\end{itemize}

\paragraph{Step a.} We focus first on $\psi_t(\rho)$, solution to \eqref{eq:NFE_specific}  with initial condition $\rho\in B(\mathcal{U},\varepsilon_0)$. Thanks to Corollary \ref{cor:U_stable}, we have that it converges as $t\to\infty$ towards some $u_{\theta_0}\in \mathcal{U}$. Thus, there exists a time $s_1\geq 0$ such that $\Vert u_{s_1} - u_{\theta_0}\Vert_{2}\leq \varepsilon_0$, and this time is of order $\dfrac{1}{\gamma}\log \varepsilon_0$.  We focus then on the random profile $U_N$. We use a mild formulation similar to the one used in Proposition \ref{prop:mild_tildeU}: one can obtain, with $u_t$ solving \eqref{eq:NFE_specific}
\begin{multline*}
d\left( U_N(t)-u_t\right)=-\left( U_N(t)-u_t\right)dt + dM_N(t) \\+ \left( \sum_{i,j=1}^N \dfrac{2\pi \cos(x_i-x_j)}{N}f\left( U_{N,j}(t-)\right)\mathbf{1}_{B_{N,i}}-\int_{-\pi}^\pi \cos(\cdot-y)f(u_t(y))dy\right)dt,
\end{multline*}
where $M_N$ is defined in \eqref{eq:def_M_N}. We have then for any $t\geq 0$
$$
U_N(t)-u_t=e^{-t}\left(U_N(0)-\rho \right) + \int_0^t e^{-(t-s)}dM_N(s) + \int_0^t e^{-(t-s)}r_N(s)ds
$$
with
\begin{multline}\label{eq:def_r_N}
r_N(s):=\sum_i \mathbf{1}_{B_{N,i}} \sum_j\dfrac{2\pi \cos(x_i-x_j)}{N}\left( f\left( U_{N,j}(s-)\right) - f\left( U_{N,j}(s)\right) \right) \\+ 
  \sum_i \mathbf{1}_{B_{N,i}}\left( \sum_j\dfrac{2\pi \cos(x_i-x_j)}{N}f\left( U_{N,j}(s)\right) - \int_{-\pi}^{\pi} \cos(x_i-y)f(U_N(s)(y))dy\right) \\ +  \sum_i \mathbf{1}_{B_{N,i}}\int_{-\pi}^\pi \left(\cos(x_i-y)-\cos(\cdot-y)\right)f(U_N(s)(y))dy + \int_{-\pi}^\pi \cos(\cdot-y)\left( f(U_N(s)(y)) - f(u_s(y))\right)dy\\
= \sum_{i=1}^N \mathbf{1}_{B_{N,i}} \left( \Upsilon_{1,i,s} + \Upsilon_{2,i,s}+ \Upsilon_{3,i,s}\right) + \Upsilon_{4,s}.
\end{multline}
As done for $\Upsilon_{1,i,s}$, $\Upsilon_{2,i,s}$ and $\Upsilon_{3,i,s}$ \eqref{eq:def_Upsilon_i} in Proposition \ref{prop:control_vn}, we have for some $C>0$
$$\left\Vert \sum_{i=1}^N \mathbf{1}_{B_{N,i}} \left( \Upsilon_{1,i,s} + \Upsilon_{2,i,s}+ \Upsilon_{3,i,s}\right)\right\Vert_{2}^2 \leq \dfrac{C}{N^2}.$$
Moreover an immediate computation gives, as $f$ is Lipschitz continuous
$$ \left\Vert \Upsilon_{4,s}\right\Vert_{2} \leq C \Vert U_N(s)-u_s\Vert_{2}.$$
Then we have for any $t\in [0,s_1]$ with $\zeta_N(s):=\int_0^s e^{-(s-u)}dM_N(u)$,
\begin{equation}\label{eq:mild_s1_f}
\Vert U_N(t)-u_t \Vert_{2}\leq \Vert U_N(0)-\rho \Vert_{2} + \Vert \zeta_N(t)\Vert_{2} + \dfrac{C}{N}+\int_0^t e^{-(t-s)} \Vert U_N(s)-u_s \Vert_{2} ds.
\end{equation}
Take $N$ sufficiently large so that $\Vert U_N(0)-\rho\Vert_{2}\leq \dfrac{\varepsilon_0}{2}$. We place ourselves on the event
\begin{equation}\label{eq:def_event_C_N}
C_N:=\left\{ \sup_{t\in [0,s_1]} \Vert \zeta_N(t)\Vert_{2} \leq N^{\eta-1/2}\right\}.
\end{equation}
As done in Proposition \ref{prop:control_noise}, $\mathbf{P}(C_N)\xrightarrow[N\to\infty]{}1$. Going back to \eqref{eq:mild_s1_f}, we have on $C_N$
$$\Vert U_N(t)-u_t \Vert_{2}\leq \dfrac{\varepsilon_0}{2}+ N^{\eta-1/2}+ \dfrac{C}{N}+\int_0^t e^{-(t-s)} \Vert U_N(s)-u_s \Vert_{2} ds.$$
We deduce with Gr\"{o}nwall lemma that for $N$ large enough, $\Vert U_N(s_1)-u_{s_1}\Vert_{2}\leq \varepsilon_0$ on $C_N$, which means that $\Vert U_N(s_1)-u_{\theta_0}\Vert_{2}\leq 2\varepsilon_0$ hence $\text{dist}\left(U_N(s_1),\mathcal{U}\right)\leq 2\varepsilon_0$. Choosing $\varepsilon_0$ small enough so that $2\varepsilon_0<\varpi$ (recall Proposition \ref{prop:phase_proj}), we can define $\psi_0^1=\text{proj}\left(U_N(s_1)\right)$ and $\vert \psi_0^1 - \theta_0 \vert\leq C\varepsilon_0$.

\paragraph{Step b.} Since we know that $\text{dist}_{L^2}\left( U_N(s_1),\mathcal{U}\right)\leq 2\varepsilon_0$ with increasing probability as $N\to\infty$, we show that $U_N$ approaches $\mathcal{U}$ up to a distance $N^{2\eta-1/2}$ doing a similar iteration as in Proposition \ref{prop:control_vn}.
Define the sequence $(h_n)$ such that $h_1=2\varepsilon_0$ and $h_{n+1}=h_n/2$, and let $\widetilde{n}_f:=\inf\left\{n\geq 1, h_n\leq N^{2\eta-1/2}\right\}$. Note that such $\widetilde{n}_f$ is of order $O(\log N)$. Fix $\widetilde{T}$ satisfying
\begin{equation}\label{eq:choice_T_tilde}
 C_PC_0 e^{\widetilde{T}\gamma} \leq 1/4,
\end{equation}
and define then for any $n\in \llbracket 1,\widetilde{n}_f\rrbracket$ the times $\widetilde{T}_n=s_1+(n-1) \widetilde{T}$. As in \eqref{eq:def_stop_couple} and \eqref{eq:def_tau_n}, define 
\begin{equation}\label{eq:def_stop_couple_tilde}
(\widetilde{n}_\tau,\widetilde{\tau}):=\inf \left\{ (n,t)\in \llbracket 1,\widetilde{n}_f\rrbracket \times [0,\widetilde{T}]: \text{dist}_{L^2}\left(U_N(\widetilde{T}_{n-1}+t),\mathcal{U}\right) > \varpi\right\},
\end{equation}
and 
\begin{equation}\label{eq:def_tau_n_tilde}
\widetilde{\tau}_n:= \left\{
    \begin{array}{ll}
        \widetilde{T}& \text{if } n<\widetilde{n}_\tau \\
        \widetilde{\tau} & \text{if } n\geq \widetilde{n}_\tau.
    \end{array}
\right.
\end{equation}
The process we consider is then $\left(U_N\left(\widetilde{T}_{n\wedge n_{\widetilde{\tau}}-1}+t\wedge \widetilde{\tau}_n\right)\right)_{n\in \llbracket 1,\widetilde{n}_f\rrbracket, t\in [0,\widetilde{T}]}$, which is exactly $(U_N(t))_{t\in [s_1,\widetilde{T}_{\widetilde{n}_f}]}$ 
unless the process has been stopped. The projection of this stopped process is well defined on the whole interval, so that we can now define rigorously the random phases $\widetilde{\phi}_{n-1}$ for $n=1,\cdots,\widetilde{n}_f$ by
\begin{equation}\label{eq:def_phi_n_tilde}
\widetilde{\phi}_{n-1}:=\text{proj}(U_N(\widetilde{T}_{n\wedge \widetilde{n}_{\tau}-1}).
\end{equation}
The object of interest is then the process $\widetilde{V}_n(t)$ of $L^2(S)$ defined for $n=1,\cdots,\widetilde{n}_f$ and $t\in [0,\widetilde{T}]$ by
\begin{equation}\label{eq:def_Vnt_tilde}
\widetilde{V}_n(t):=U_N(\widetilde{T}_{n\wedge \widetilde{n}_{\tau}-1}+t\wedge \widetilde{\tau}_n)-u_{\widetilde{\phi}_{n-1}}.
\end{equation}
It satisfies the mild equation 
\begin{equation}\label{eq:mild_form_vn_tilde}
\widetilde{V}_n(t)=e^{(t\wedge {\widetilde{\tau}}_n)\mathcal{L}_{{\widetilde{\phi}_{n-1}}}}\widetilde{V}_{n}(0) + \int_{0}^{t\wedge \widetilde{\tau}_n} e^{(t\wedge \widetilde{\tau}_n-s)\mathcal{L}_{\widetilde{\phi}_{n-1}}}\widetilde{R}_n(s)ds+ \widetilde{\zeta}_{n}(t\wedge \widetilde{\tau}_n)
\end{equation}
where
\begin{equation}\label{eq:def_zeta_n_tilde}
\widetilde{\zeta}_{n}(t):= \int_{0}^{t} e^{(t-s)\mathcal{L}_{\widetilde{\phi}_{n-1}}}dM_N(s),
\end{equation}
and 
\begin{multline}\label{eq:def_R_n_tilde}
\widetilde{R}_{n}(t)= \cos\ast\left(y\mapsto \widetilde{V}_n(t)(y)^2 \int_0^1 f ''\left( u_{\widetilde{\phi}_{n-1}}(y)+r\widetilde{V}_n(t)(y)\right)(1-r)dr \right)
\\+ \left(\sum_{i,j=1}^N \dfrac{2\pi\cos(x_i-x_j)}{N} f (U_{N,j}(t-))\mathbf{1}_{B_{N,i}}- \cos\ast f (U_{N}(t))\right).
\end{multline}
Define the event 
\begin{equation}\label{eq:def_event_D_N}
B_N:=C_N\bigcap\left\{ \sup_{n\in \llbracket 1,\widetilde{n}_f\rrbracket} \sup_{t\in [0,\widetilde{T}]} \Vert \widetilde{\zeta}_n(t)\Vert_{2} \leq N^{\eta-1/2}\right\}.
\end{equation}
As done in Proposition \ref{prop:control_noise}, $\mathbf{P}(B_N)\to 1$ and from now on we work under $B_N$. We want to show by induction that on $B_N$, for all $n\in \llbracket 1 , \tilde{n}_f\rrbracket$, $\widetilde{V}_n(0)\leq h_n$. The first step of the proof ensures that on $C_N$, $\widetilde{V}_1(0)\leq h_1$. Assume for some $n<\widetilde{n}_f$, $\widetilde{V}_n(0)\leq h_n$. From the mild formulation \eqref{eq:mild_form_vn_tilde} we obtain (as done  in \eqref{eq:aux_mild_gen2})
\begin{equation}\label{eq:aux_mild_gen_tilde}
\Vert \widetilde{V}_n(t) \Vert_{2}\leq C_0e^{(t\wedge \widetilde{\tau}_n)\gamma}h_n +  \widetilde{T}C_\mathcal{L}  \sup_{0\leq s\leq \widetilde{T}} \Vert \widetilde{R}_n(s)\Vert_{2}+ N^{\eta-1/2}.
\end{equation}
Define then $t^*$ as
\begin{equation}\label{def:time_t*_tilde}
\widetilde{t*}:= \inf\left\{ t\in [0,\widetilde{T}]: \Vert \widetilde{V}_n(t) \Vert_{2} \geq  2C_0h_n\right\}.
\end{equation}
We have $\widetilde{t^*}>0$, and if $t\leq \widetilde{t^*}$, $\sup_{0\leq s \leq t} \Vert \widetilde{R}_{n}(s)\Vert_{2} \leq C_{R_2}(h_n^2+N^{-1})$, as done in \eqref{eq:maj_RN}. Coming back to \eqref{eq:aux_mild_gen_tilde}, we obtain that (for some positive constant $C_{\widetilde{R}}$)
\begin{equation}\label{eq:aux_mild_gen2_tilde}
\Vert \widetilde{V}_n(t) \Vert_{2}\leq C_0e^{(t\wedge \tau_n)\gamma}h_n + TC_{\widetilde{R}}(h_n^2 + N^{-1})+ N^{\eta-1/2}.
\end{equation}
Since $n<\widetilde{n}_f$, $2\varepsilon_0\geq h_n>N^{2\eta-1/2}$ hence for $N$ large enough, $N^{\eta-1/2}$, $N^{-1}$ are negligible with respect to $h_n$, same for $h_n^2$ thus $\widetilde{t^*}\geq \widetilde{T}$. To conclude the induction, we need to show that $\Vert \widetilde{V}_{n+1}(0)\Vert\leq h_{n+1}=\frac{h_n}{2}$. As shown in \eqref{eq:ite_vn_+1}, $\widetilde{V}_{n+1}(0) =  \left(P_{\widetilde{\phi}_n}^\perp -P_{\widetilde{\phi}_{n-1}}^\perp \right)\widetilde{V}_n(\widetilde{T})+P_{\widetilde{\phi}_{n-1}}^\perp \widetilde{V}_n(\widetilde{T}) +P_{\widetilde{\phi}_n}^\perp \left( u_{\widetilde{\phi}_{n-1}}-u_{\widetilde{\phi}_n}\right)$.
 From the similar controls \eqref{eq:control_P_u_phi} and \eqref{eq:control_P_v} and using \eqref{eq:aux_mild_gen2_tilde} for $t=\widetilde{T}$, we have for $N$ large enough,
$$\Vert \widetilde{V}_{n+1}(0)\Vert_{2}\leq \Vert P_{\widetilde{\phi}_{n-1}}^\perp \widetilde{V}_n(\widetilde{T}) \Vert_{2} + O(h_n^2)\leq 2 C_PC_0 e^{\widetilde{T}\gamma}h_n+ O(h_n^2).
$$
Recall \eqref{eq:choice_T_tilde} and $\gamma<0$, the fact that $\Vert \widetilde{V}_{n+1}(0)\Vert_{2}\leq h_{n+1}$ follows then and the iteration is concluded. Thus, we have constructed a time $s_2=s_1+(\widetilde{n}_f-1)\widetilde{T}$ such that, on $B_N$ for $N$ large enough, setting $\psi_0^2:=\text{proj}\left(U_N(s_2)\right)$, we have $\Vert U_N(s_2)-u_{\psi_0^2}\Vert_2 \leq N^{2\eta-1/2}$ and $\vert\psi_0^2-\psi_0^1\vert\leq C\varepsilon_0$, which gives $\vert \psi_0^2-\theta_0\vert \leq C'\varepsilon_0$ sor some $C'>0$. 

\paragraph{Step c.} So far, we have constructed a time $s_2=C\left(\vert \log \varepsilon_0\vert + \log N\right)$ for which we have $\text{dist}_{L^2}\left(U_N(s_2),\mathcal{U}\right)\leq N^{2\eta-1/2}$. We want some $s_3=\tilde{C}\log N \geq s_2$, $\tilde{C}=C+1$, independent of $\varepsilon_0$ such that with $\psi_0^3:=\text{proj}(U_N(s_3))$, $\left\Vert U_N(s_3) - u_{\psi_0^3}\right\Vert \leq N^{2\eta-1/2}$. For this, it suffices to decompose the dynamics on $[s_2,s_3]$ in a same way as before in both Steps 1 and 2. This induces a drift $\vert \psi_0^3-\psi_0^2\vert \leq C N^{2\eta-1/2}\log(N)\leq \varepsilon_0$ for $N$ large enough. This last step concludes the proof with $T_0(N)=s_3$.
\end{proof}

\section{Fluctuations on the manifold (proofs)}
\label{S:fluct}

The aim of this section is to prove Theorem \ref{thm:theta_fluc}. We start by giving an auxiliary lemma.

\begin{lem}\label{lem:isochron_close}
There exists some $C>0$ such that for any $g \in B(\mathcal{U},\varepsilon_0)$,
$$ \textnormal{dist}_{L^2}\left(g,\mathcal{U}\right) \leq \Vert g -u_{\theta(g)}\Vert_{2} \leq C \textnormal{dist}_{L^2}\left(g,\mathcal{U}\right).$$
\end{lem}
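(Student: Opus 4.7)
The lower bound is immediate from the definition of distance: since $u_{\theta(g)} \in \mathcal{U}$, we have $\textnormal{dist}_{L^2}(g,\mathcal{U}) \leq \|g - u_{\theta(g)}\|_2$. The whole content of the lemma is in the upper bound.

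For the upper bound, the plan is as follows. By compactness of $\mathcal{U}$, there exists $\phi^\ast \in S$ (a priori depending on $g$) such that $\|g - u_{\phi^\ast}\|_2 = \textnormal{dist}_{L^2}(g,\mathcal{U})$. Since $u_{\phi^\ast}$ is stationary for the flow $\psi_t$, Proposition \ref{prop:ex_isochron} gives $\theta(u_{\phi^\ast}) = \phi^\ast$. The triangle inequality then yields
\begin{equation*}
\|g - u_{\theta(g)}\|_2 \leq \|g - u_{\phi^\ast}\|_2 + \|u_{\phi^\ast} - u_{\theta(g)}\|_2,
\end{equation*}
and since $\phi \mapsto u_\phi = A\cos(\cdot + \phi)$ is Lipschitz (with constant bounded by $\sqrt{2\pi}\,A$ in the $L^2$ norm), the second term is at most $C_u \,|\phi^\ast - \theta(g)|$.

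It remains to control $|\phi^\ast - \theta(g)|$ by $\|g - u_{\phi^\ast}\|_2$. This is where one uses that $\theta$ is $C^\infty$ on $B(\mathcal{U},\varepsilon_0)$, so in particular Lipschitz on this neighborhood (possibly after replacing $\varepsilon_0$ by a slightly smaller radius so that $\overline{B(\mathcal{U},\varepsilon_0)}$ is contained in the open set on which $\theta$ is smooth; by compactness of $\mathcal{U}$ one obtains a uniform Lipschitz constant $L_\theta$, and in fact the formula \eqref{eq:diff_theta} for $D\theta(u_\phi)$ shows this differential is uniformly bounded in $\phi$). Applying this to the pair $(g, u_{\phi^\ast})$, both of which lie in $B(\mathcal{U},\varepsilon_0)$, we get
\begin{equation*}
|\phi^\ast - \theta(g)| = |\theta(u_{\phi^\ast}) - \theta(g)| \leq L_\theta \,\|g - u_{\phi^\ast}\|_2.
\end{equation*}
Combining the two estimates yields $\|g - u_{\theta(g)}\|_2 \leq (1 + C_u L_\theta)\,\textnormal{dist}_{L^2}(g,\mathcal{U})$, which is the claim with $C := 1 + C_u L_\theta$. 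The only delicate point is ensuring a uniform Lipschitz constant for $\theta$ on all of $B(\mathcal{U},\varepsilon_0)$, which follows from the $C^\infty$ regularity proved in Proposition \ref{prop:ex_isochron} together with compactness of $\mathcal{U}$ and a mean-value-type argument.
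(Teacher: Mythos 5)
Your proof is correct and follows essentially the same route as the paper: pick the closest point $u_{\phi^\ast}=y\in\mathcal{U}$ by compactness, note $\theta(u_{\phi^\ast})=\phi^\ast$, apply the triangle inequality through $u_{\phi^\ast}$, and close the gap using the Lipschitz continuity of $\phi\mapsto u_\phi$ and of $\theta$ on $B(\mathcal{U},\varepsilon_0)$. The additional care you take about a uniform Lipschitz constant for $\theta$ is a reasonable clarification of a point the paper leaves implicit.
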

\begin{proof} Let $g\in  B(\mathcal{U},\varepsilon_0)$. The first inequality directly comes from the definition of $\text{dist}_{L^2}\left(g,\mathcal{U}\right)$. By compactness of $\mathcal{U}$, there exists some $y\in \mathcal{U}$ such that $\textnormal{dist}_{L^2}\left(g,\mathcal{U}\right) = \Vert g -y\Vert_{2}$ (and $y=u_{\theta(y)}$). Then 
$$\Vert g - u_{\theta(g)}\Vert_{2}\leq \Vert g-y \Vert_{2} + \Vert u_{\theta(y)}-u_{\theta(g)}\Vert_{2},$$
and as $\phi\mapsto u_\phi$ and $\theta$ are Lipschitz continuous (recall $u_\phi=A\cos(\cdot+\phi)$ and $\theta$ is $\mathcal{C}^2$ from Proposition \ref{prop:ex_isochron}), $\Vert u_{\theta(y)}-u_{\theta(g)}\Vert_{2} \leq \hat{C} \Vert g-y\Vert_{2}$ for some $\hat{C}>0$ (independent of the choice of $g$).
\end{proof}

\subsection{Main structure of the proof of Theorem \ref{thm:theta_fluc}}

First, \ref{thm:UN_close_U} and Lemma \ref{lem:isochron_close} give that one can find an event $\Omega_N$ such that $\mathbf{P}(\Omega_N)\xrightarrow[N\to\infty]{}1$ and on this event
\begin{equation}
\label{eq:prox_UN_utheta}
\sup_{t\in [T_0(N),T_f(N)]} \left\Vert  U_N(t) - u_{\theta(U_N(t))} \right\Vert_2  = O\left( N^{\eta-1/2}\right),
\end{equation} 
 with $T_0(N)=C\log(N)$ and $T_f(N)=N\tau_f$. It remains to study the behavior of the isochron map of the process, that is $\theta(U_N(t))$. We do a change of variables and introduce $\tau_0(N):= \dfrac{T_0(N)}{N}$, we define for any $\tau\in [\tau_0(N),\tau_f]$ the rescaled process
\begin{equation}
\label{eq:def_theta_hat}
\widehat{\theta}_N(\tau)=\theta\left( U_N\left(N\tau\right)\right).
\end{equation}
In the proof, we keep the notation $t$ for the microscopic time variable, that is when $t\in [T_0(N),T_f(N)]$ and $\tau$ for the macroscopic time variable, when $\tau\in [\tau_0(N),\tau_f]$. Theorem \ref{thm:theta_fluc} relies on the following decomposition of $\widehat{\theta}_N$, obtained by Itô's lemma.

\begin{prop}\label{prop:theta_hat_decomp}
For any initial condition $\tau_0\geq \tau_0(N)$, for any $\tau\geq \tau_0$, $\widehat{\theta}_N(\tau)$ can be written as 
\begin{equation}\label{eq:theta_hat_decomp}
\widehat{\theta}_N(\tau)=\widehat{\theta}_N(\tau_0)+\vartheta_N(\tau_0,\tau)+\Theta_N(\tau_0,\tau),
\end{equation}
where 
$$ \sup_{\tau_0(N)\leq \tau_0\leq \tau \leq \tau_f} \mathbf{E} \left( \vert \vartheta_N(\tau_0,\tau)\vert\right)\xrightarrow[N\to\infty]{}0$$
and $\Theta_N(\tau_0,\tau)$ is a real martingale with quadratic variation
\begin{equation}
\label{eq:crochet_widetheta}
\left[ \Theta_N\right]_\tau = \dfrac{1}{N} \sum_{j=1}^N \int_{\tau_0}^{\tau} \Phi(x_j,\widehat{\theta}_N(s))f(u_{\widehat{\theta}_N(s)}(x_j))ds
\end{equation}
with 
\begin{equation}
\label{eq:def_Phi}
\Phi(x,\theta):=4\pi^2 \sin^2(x+\theta).
\end{equation}
\end{prop}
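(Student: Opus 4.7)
The strategy is to apply Itô's formula to the process $t\mapsto \theta(U_N(t))$. Since $U_N$ is a pure-jump semimartingale with continuous decay $-U_N(s)ds$ and jumps $\Delta_j^N:=\sum_i \frac{2\pi\cos(x_i-x_j)}{N}\mathbf{1}_{B_{N,i}}$ triggered by $Z_{N,j}$ at rate $f(U_{N,j}(s-))$, and $\theta$ is of class $C^\infty$ in a neighborhood of $\mathcal{U}$ by Proposition~\ref{prop:ex_isochron}, I will obtain after compensating $dZ_{N,j} = f(U_{N,j}(s-))ds + d\widetilde{M}_{N,j}$ the semimartingale decomposition
\begin{equation*}
\theta(U_N(t)) - \theta(U_N(t_0)) = \int_{t_0}^{t} \mathcal{G}\theta(U_N(s))\,ds + M_N^\theta(t),
\end{equation*}
where the drift is $\mathcal{G}\theta(g) = D\theta(g)[-g] + \sum_{j=1}^N [\theta(g+\Delta_j^N) - \theta(g)]f(g_j)$ and $M_N^\theta$ is a purely discontinuous martingale. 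The time-rescaling $\tau=t/N$ will then produce the required splitting \eqref{eq:theta_hat_decomp}.

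The key tool that I plan to exploit is the \emph{invariance of the isochron under the NFE flow}: differentiating $\theta(\psi_t(g))=\theta(g)$ at $t=0$ yields $D\theta(g)[-g+\cos\ast f(g)]=0$ for every $g$ in the relevant neighborhood. Writing $\sum_j \Delta_j^N f(g_j) = \cos\ast f(g) + \eta_N(g)$ with $\eta_N(g)$ the Riemann-sum discrepancy already estimated in \eqref{eq:norme_Upsilon_2}--\eqref{eq:norme_Upsilon_3}, and Taylor-expanding $\theta(g+\Delta_j^N)-\theta(g)$ to second order (the cubic remainder contributing $O(N\cdot\|\Delta_j^N\|_2^3)=O(N^{-2})$ uniformly in $g$ close to $\mathcal{U}$), this rewrites
\begin{equation*}
\mathcal{G}\theta(g) = D\theta(g)[\eta_N(g)] + \frac{1}{2}\sum_{j=1}^N D^2\theta(g)[\Delta_j^N,\Delta_j^N]f(g_j) + \mathcal{R}_N(g).
\end{equation*}

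The hard part will be to show that once integrated and rescaled this drift vanishes in $L^1$. Theorem~\ref{thm:UN_close_U} and Remark~\ref{rem:speed_thm_closeU} allow me to substitute $u_{\widehat{\theta}_N(s/N)}$ for $U_N(s)$ modulo $O(N^{\eta-1/2})$ error, so by Lipschitz continuity of $D\theta$, $D^2\theta$ one is reduced to evaluating $\mathcal{G}\theta(u_\phi)$. Plugging the explicit expressions \eqref{eq:diff_theta}--\eqref{eq:diff2_theta} and computing $\langle v_\phi,\Delta_j^N\rangle_\phi$, $\langle u_\phi,\Delta_j^N\rangle_\phi$, $\beta_\phi(v_\phi,\Delta_j^N)$ and $\beta_\phi(u_\phi,\Delta_j^N)$ as sharp Riemann sums of smooth kernels, the first-order drift $D\theta(u_\phi)[\eta_N(u_\phi)]$ and the leading $1/N$ part of $\frac{1}{2}\sum_j D^2\theta(u_\phi)[\Delta_j^N,\Delta_j^N]f(u_\phi(x_j))$ are each a constant (independent of $\phi$, by translation symmetry) times $1/N$; the crucial point is that these constants must cancel, and this cancellation is an algebraic consequence of the parity identities of Lemma~\ref{lem:computation_A} applied term-by-term to the five contributions of $D^2\theta(u_\phi)$. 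This is the rigorous incarnation of the ``invariance by rotation'' heuristic of Section~\ref{S:strategy}: the only intrinsic drift allowed on $\mathcal{U}$ must be $\phi$-independent, and its rate must be compatible with the equilibrium structure, forcing it to be $o(1/N)$ per unit microscopic time. Integrated over $[T_0(N),N\tau_f]$ and combined with the cubic remainder estimate, this yields $\sup_{\tau_0\leq\tau\leq\tau_f}\mathbf{E}|\vartheta_N(\tau_0,\tau)|\to 0$.

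For the martingale $\Theta_N$, it suffices to compute
\begin{equation*}
\langle M_N^\theta\rangle_t = \sum_{j=1}^N \int_{t_0}^{t}\bigl[\theta(U_N(s-)+\Delta_j^N)-\theta(U_N(s-))\bigr]^2 f(U_{N,j}(s-))\,ds.
\end{equation*}
Using the first-order Taylor approximation $\theta(g+\Delta_j^N)-\theta(g) = D\theta(g)[\Delta_j^N] + O(\|\Delta_j^N\|_2^2)$ and the computation $\langle v_\phi,\Delta_j^N\rangle_\phi \sim \frac{2\pi}{N}v_\phi(x_j) = -\frac{2\pi A}{N}\sin(x_j+\phi)$ coming from Lemma~\ref{lem:computation_A} combined with \eqref{eq:diff_theta}, one recovers $(D\theta(u_{\widehat{\theta}_N})[\Delta_j^N])^2 f(U_{N,j})\sim \frac{1}{N^2}\Phi(x_j,\widehat{\theta}_N)f(u_{\widehat{\theta}_N}(x_j))$ with $\Phi(x,\theta)=4\pi^2\sin^2(x+\theta)$; the cross-term and quadratic-remainder contributions to $\langle M_N^\theta\rangle$ are each $O(1/N)$ and will be absorbed into $\vartheta_N$. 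After the macroscopic time-change, the claimed expression \eqref{eq:crochet_widetheta} follows.
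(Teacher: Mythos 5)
Your overall architecture (It\^o's formula on $\theta(U_N(t))$, the isochron invariance $D\theta(g)[-g+\cos\ast f(g)]=0$ to kill the macroscopic drift, a second-order Taylor expansion of the jumps, and the computation $D\theta(u_\phi)[\Delta_j^N]\sim -\tfrac{2\pi}{N}\sin(x_j+\phi)$ for the quadratic variation) is the same as the paper's, and your treatment of the martingale part and of the cubic remainder is correct. The gap is in the drift.

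The cancellation you invoke is the wrong one. The leading $1/N$ part of $\tfrac{1}{2}\sum_j D^2\theta(u_\phi)[\Delta_j^N,\Delta_j^N]f(u_\phi(x_j))$ is proportional to $\tfrac{1}{N}\cdot\tfrac{1}{N}\sum_j\cos(x_j+\phi)\sin(x_j+\phi)f(A\cos(x_j+\phi))$, and the Riemann sum converges to $\int_S\cos\sin\, f(A\cos)=0$ by parity; so this term is $O(N^{-2})$ per unit microscopic time and vanishes \emph{on its own} (this is the paper's $L_4$ in Lemma \ref{lem:I4}, via Lemma \ref{lem:computations_Dtheta2_chi}). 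It therefore cannot cancel the first-order drift $D\theta(u_\phi)[\eta_N(u_\phi)]$, which equals $\langle v_\phi,\tilde\Upsilon_s\rangle_\phi/A=\pi A/N+o(1/N)$ (see \eqref{eq:Upsilon_v}) and, integrated over a time horizon $N\tau_f$, would produce a spurious macroscopic drift $\pi A\,\tau$.

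The actual cancelling partner is precisely the term you discard in the step ``substitute $u_{\widehat\theta_N}$ for $U_N(s)$ \dots one is reduced to evaluating $\mathcal{G}\theta(u_\phi)$.'' That substitution changes not only the base point of $D\theta$ (which Lipschitz continuity does handle, at cost $O(N^{\eta-1/2}\cdot N^{-1})$) but also the \emph{argument} $\eta_N(U_N(s))$ into $\eta_N(u_{\theta_N})$. The discarded piece $D\theta(u_{\theta_N})[\eta_N(U_N(s))-\eta_N(u_{\theta_N})]$ is \emph{not} $o(1/N)$: both $\eta_N(U_N)$ and $\eta_N(u_\phi)$ are Riemann-discrepancy functionals of size $O(1/N)$, and their difference retains a systematic bias coming from sampling the smooth profile $u_{\theta_N}$ at the right endpoints $x_j$ of the cells $B_{N,j}$ (since $\int_{B_{N,j}}(y-x_j)dy=-2\pi^2/N^2\neq 0$). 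The paper's computation of $\Delta_N$ in \eqref{eq:aux_Delta} shows this piece equals $-\pi A/N+o(1/N)$, which is exactly what annihilates the $+\pi A/N$ above (compare \eqref{eq:Upsilon_v} with \eqref{eq:Upsilon_Upsilon_tilde}). As written, your argument loses this term and hence cannot conclude that $\vartheta_N\to 0$; to repair it you must split $\Upsilon_s=\tilde\Upsilon_s+(\Upsilon_s-\tilde\Upsilon_s)$ as in the paper's Lemma \ref{lem:I2} and carry out the second-order Riemann-sum analysis of the difference rather than bounding it by $\Vert U_N-u_{\theta_N}\Vert_2$.
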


The proof of Proposition \ref{prop:theta_hat_decomp} is postponed to Section \ref{S:decomposition_hattheta}. The remaining of the proof of Theorem \ref{thm:theta_fluc} is to prove the tightness of $\left( \widehat{\theta}_N(t)\right)$ and to identify its limit. We apply Aldous criterion: note first that for any $\tau \in [\varepsilon,\tau_f]$, $\widehat{\theta}_N(\tau)\in S$ a compact set. Let $(\tau_N)_N$ be a bounded sequence of $\widehat{\theta}_N$-optional times, let $(h_N)$ be a sequence of positive constants such that $h_N\to 0$. From Proposition \ref{prop:theta_hat_decomp}, we have
$$\widehat{\theta}_N(\tau_N+h_N)-\widehat{\theta}_N(\tau_N) = \vartheta_N(\tau_N,\tau_N+h_N) +\Theta_N(\tau_N,\tau_N+h_N),$$
where $\vartheta_N(\tau_N,\tau_N+h_N)\xrightarrow[N\to\infty]{L^1}0$ and  $\Theta_N$ has the quadratic variation
$$\left[ \Theta_N \right]_{\tau_N+h_N}= \dfrac{1}{N} \sum_{j=1}^N \int_{\tau_N}^{\tau_N+h_N} \Phi(x_j,\widehat{\theta}_N(s))f(u_{\widehat{\theta}_N(s)}(x_j))ds.$$
Using Burkholder-Davis-Gundy inequality, as $\Phi$ and $f$ are bounded, we have that
$$\mathbf{E}\left[\Theta_N(\tau_N,\tau_N+h_N)^2\right]\leq C \mathbf{E}\left[ \left[ \Theta_N\right]_{\tau_N+h_N} \right] \leq Ch_N$$
for some positive constants $C$. We obtain then that $\widehat{\theta}_N(\tau_N+h_N)-\widehat{\theta}_N(\tau_N) \xrightarrow[N\to\infty]{L^1}0$ hence the convergence in probability: for all $\varepsilon>0$,
$$\mathbf{P}\left( \left\vert \widehat{\theta}_N(\tau_N+h_N)-\widehat{\theta}_N(\tau_N) \right\vert> \varepsilon \right)\xrightarrow[N\to\infty]{}0.$$
We can then use Aldous criterion (see Theorem 16.8 of \cite{billingsley}): $\left(\tau\in [\varepsilon_N,\tau_f] \mapsto \widehat{\theta}_N(\tau) \right)_N$ is tight. Let $\tau\mapsto\widehat{\theta}(\tau)$ be a limit in distribution of any subsequence of $(\tau\mapsto\widehat{\theta}_N(\tau))_N$ (by convenience renamed $\widehat{\theta}_N$) , that is $\widehat{\theta}_N \xrightarrow[N\to\infty]{law} \widehat{\theta}$. By Skorokhod's representation theorem, we can represent this convergence on a common probability space such that $\widehat{\theta}_N \xrightarrow[N\to\infty]{a.s.} \widehat{\theta}$. Using this in \eqref{eq:crochet_widetheta}, we obtain that for any $\tau\in [0,\tau_f]$, as $N$ goes to infinity, the quadratic variation of $\widehat{\theta}$ is
$$\left[\widehat{\theta}\right]_\tau= 2\pi \int_0^\tau \int_S\sin^2(x+\widehat{\theta}(s))f\left(A\cos(x+\widehat{\theta}(s))\right) dx~ ds=\sigma^2 \tau,$$
with $\sigma$ defined in \eqref{eq:def_sigma}. We conclude by Lévy's characterization theorem and obtain \eqref{eq:theta_fluc}.


\subsection{About the decomposition of Proposition \ref{prop:theta_hat_decomp}} \label{S:decomposition_hattheta}

\begin{proof}[Proof of Proposition \ref{prop:theta_hat_decomp}]  To show \eqref{eq:theta_hat_decomp}, we study $\left(\theta(U_N(t)\right)_{t\in [T_0(N),T_f(N)]}$. To simplify the notations, we introduce
\begin{equation}
\label{eq:def_theta_N}
\theta_N(t):=\theta\left(U_N(t)\right).
\end{equation}
Note that from the decomposition \eqref{eq:mild_form_tildeU} of $U_N(t)$ and the definition $M_N(t)$ in \eqref{eq:def_M_N}, one can write
$$dU_N(t)=B_N(t)dt + dM_N(t)$$
where $B_N(t):=-U_N(t)+ \cos\ast f\left( U_N(t)\right)+ \Upsilon_t$, with 
\begin{equation}\label{eq:def_Upsilon}
\Upsilon_t(x)=\sum_{i=1}^N \left(  \dfrac{2\pi}{N}\sum_{j=1}^N \cos(x_i-x_j)f(U_{N,j}(t-)) - \int_S\cos(x-y)f(U_N(t)(y))dy\right)\mathbf{1}_{B_{N,i}}(x).
\end{equation}
The starting point is to write the semimartingale decomposition of $\theta(U_N(t))$ from Itô formula:
\begin{multline}\label{eq:ito_theta}
\theta(U_N(t))= \theta\left(U_N(t_0)\right)+ \int_{t_0}^t  D\theta \left(U_N(s-)\right)[-U_N(s)+ \cos\ast f\left( U_N(s-)\right)]ds\\ +\int_{t_0}^t  D\theta \left(U_N(s-)\right)\Upsilon_sds+ \int_{t_0}^t  D\theta \left(U_N(s-)\right)[dM_N(s)] \\+ \sum_{j=1}^N\int_{t_0}^t\int_0^\infty \left[ \theta\left( U_N(s-)+\chi_j(s,z)\right) - \theta\left(U_N(s-)\right) - D\theta\left(U_N(s-)\right)\left[\chi_j(s,z)\right]\right]\pi_j(ds,dz)\\
=: \theta(U_N(t_0))+I_1^N(t_0,t)+ I_2^N(t_0,t) + I_3^N(t_0,t) + I_4^N(t_0,t).
\end{multline}
We are going to focus on each of the terms of \eqref{eq:ito_theta}, that is $I_k^N(t_0,t)$ for $k\in \{1,2,3,4\}$. We have the following lemmas.

\begin{lem}\label{lem:I1}
We have
\begin{equation}\label{eq:ctrl_I1}
\sup_{t_0\in [T_0(N),T_f(N)]} \sup_{t\in (t_0, T_f(N))} \left\vert I_1^N(t_0,t) \right\vert \xrightarrow[N\to\infty]{}0
\end{equation}
in probability.
\end{lem}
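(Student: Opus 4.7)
The proof rests on a single algebraic identity: the isochron map $\theta$ is constant along deterministic NFE trajectories. For $g\in B(\mathcal{U},\varepsilon_{0})$, local stability (Corollary \ref{cor:U_stable}) keeps $\psi_{t}(g)\in B(\mathcal{U},\varepsilon_{0})$ for every $t\geq 0$. Applying the defining property \eqref{eq:def_isochronal_phase} to $\psi_{t}(g)$ and invoking the semigroup property $\psi_{s}\circ\psi_{t}=\psi_{s+t}$, one checks at once that $\theta(\psi_{t}(g))=\theta(g)$ for every $t\geq 0$. Writing $\mathcal{F}(g):=-g+\cos\ast f(g)$ for the right-hand side of the NFE \eqref{eq:NFE_specific}, the $\mathcal{C}^{\infty}$ regularity of $\theta$ from Proposition \ref{prop:ex_isochron} and the chain rule applied at $t=0$ yield
\begin{equation*}
D\theta(g)\bigl[\mathcal{F}(g)\bigr]=0\qquad\text{for every }g\in B(\mathcal{U},\varepsilon_{0}).
\end{equation*}
This deterministic identity is the crux of the lemma.

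The next observation is that the integrand of $I_{1}^{N}$ coincides almost surely with $D\theta(U_{N}(s-))[\mathcal{F}(U_{N}(s-))]$. Indeed, $U_{N}$ has only finitely many jumps on any bounded time interval, so $U_{N}(s)=U_{N}(s-)$ for Lebesgue-almost every $s$, and the $-U_{N}(s)$ appearing in $I_{1}^{N}$ may be replaced by $-U_{N}(s-)$ without changing the integral. Let $\Omega_{N}$ be the high-probability event provided by Theorem \ref{thm:UN_close_U} (in the refined form of Remark \ref{rem:speed_thm_closeU}), on which $\sup_{t\in[T_{0}(N),T_{f}(N)]}\mathrm{dist}_{L^{2}}(U_{N}(t),\mathcal{U})\leq CN^{\eta-1/2}$. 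A direct computation analogous to \eqref{eq:norme_Upsilon_2}--\eqref{eq:norme_Upsilon_3} shows that each jump of $U_{N}$ has $L^{2}$-norm $O(1/N)$, hence for $N$ large enough the left limit $U_{N}(s-)$ also belongs to $B(\mathcal{U},\varepsilon_{0})$ for every $s\in[T_{0}(N),T_{f}(N)]$ on $\Omega_{N}$. The identity above then forces the integrand to vanish identically, so $I_{1}^{N}(t_{0},t)\equiv 0$ on $\Omega_{N}$ for every admissible pair $(t_{0},t)$. Since $\mathbf{P}(\Omega_{N})\to 1$, \eqref{eq:ctrl_I1} follows.

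The only genuine ingredient beyond bookkeeping is the isochron invariance identity, whose derivation is purely deterministic; no moment estimate, no Gr\"onwall-type iteration, and no time-discretization enter. Morally, the cancellation is a direct consequence of the rotational symmetry of the NFE along $\mathcal{U}$: it is precisely the mechanism ensuring that no macroscopic drift survives on the manifold, in agreement with the strategy outlined in Section \ref{S:strategy}.
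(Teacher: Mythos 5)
Your proof is correct and rests on the same key identity as the paper's, namely $D\theta(g)\bigl[-g+\cos\ast f(g)\bigr]=0$ on $B(\mathcal{U},\varepsilon_0)$, obtained by differentiating $\theta(\psi_t(g))=\theta(g)$ at $t=0$, combined with the proximity event furnished by Theorem \ref{thm:UN_close_U}. The only difference is in the final bookkeeping: where you observe that $U_N(s-)=U_N(s)$ for Lebesgue-almost every $s$ so that the integral vanishes exactly on $\Omega_N$, the paper instead inserts $D\theta(U_N(s))$ and the value at $u_{\theta(U_N(s))}$ and bounds the resulting product of a jump increment of size $O(1/N)$ with a distance of size $O(N^{\eta-1/2})$ over a time window of length $O(N)$ --- both routes yield \eqref{eq:ctrl_I1}.
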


\begin{lem}\label{lem:I2}
We have
\begin{equation}\label{eq:ctrl_I2}
\sup_{t_0\in [T_0(N),T_f(N)]} \sup_{t\in (t_0, T_f(N))} \left\vert I_2^N(t_0,t) \right\vert \xrightarrow[N\to\infty]{}0
\end{equation}
in probability.
\end{lem}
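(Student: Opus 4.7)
The plan is to work on the high-probability event $\Omega_N$ introduced in the main structure of the proof of Theorem \ref{thm:theta_fluc}: by Theorem \ref{thm:UN_close_U} and Lemma \ref{lem:isochron_close}, $U_N(s)\in B(\mathcal{U},\varepsilon_0)$ and $\Vert U_N(s)-u_{\theta_N(s)}\Vert_2 = O(N^{\eta-1/2})$ uniformly for $s\in[T_0(N),T_f(N)]$, with $T_f(N)=N\tau_f$. The main obstacle is already visible: the naive estimate $\Vert \Upsilon_s\Vert_2 = O(1/N)$ (obtained as in \eqref{eq:norme_Upsilon_2}--\eqref{eq:norme_Upsilon_3}) combined with a uniform bound on $\Vert D\theta\Vert$ in a neighborhood of $\mathcal{U}$ gives only $|I_2^N(t_0,t)|=O(T_f(N)/N) = O(\tau_f)$, which does not vanish; one must exhibit a cancellation in $D\theta(u_\phi)[\Upsilon_s]$ that gains an extra factor $1/N$.

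I would first split $D\theta(U_N(s-))[\Upsilon_s]=D\theta(u_{\theta_N(s-)})[\Upsilon_s]+R_N(s)$, with the remainder $R_N(s):=(D\theta(U_N(s-))-D\theta(u_{\theta_N(s-)}))[\Upsilon_s]$ handled by brute force. Indeed, by the $C^2$-regularity of $\theta$ on $B(\mathcal{U},\varepsilon_0)$ (Proposition \ref{prop:ex_isochron}) and a mean-value argument, $\Vert D\theta(U_N(s-))-D\theta(u_{\theta_N(s-)})\Vert_{(L^2)^*} \leq C\Vert U_N(s-)-u_{\theta_N(s-)}\Vert_2 = O(N^{\eta-1/2})$ on $\Omega_N$; together with $\Vert \Upsilon_s\Vert_2=O(1/N)$ this gives $|R_N(s)|=O(N^{\eta-3/2})$, so that $\int_{t_0}^{t}|R_N(s)|ds = O(T_f(N)\cdot N^{\eta-3/2}) = O(N^{\eta-1/2})$, which vanishes since $\eta<1/2$.

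The heart of the argument is the estimate $|D\theta(u_\phi)[\Upsilon_s]| = O(1/N^2)$, uniform in $\phi\in S$ and $s$. By \eqref{eq:diff_theta} and Lemma \ref{lem:computation_A} (which gives $\Vert v_\phi\Vert_\phi = A$), $D\theta(u_\phi)[h] = A^{-1}\int_S w_\phi(y) h(y) dy$ with $w_\phi(y) := v_\phi(y) f'(u_\phi(y))$ smooth, $2\pi$-periodic in $y$, and with derivatives bounded uniformly in $\phi\in S$. Rewriting \eqref{eq:def_Upsilon} via $\frac{2\pi}{N}\cos(x_i-x_j)=\int_{B_{N,j}}\cos(x_i-x_j) dz$, one has
\begin{equation*}
\int_S w_\phi\, \Upsilon_s\, dy = \sum_{i,j=1}^N f(U_{N,j}(s-)) \int_{B_{N,i}}\!\int_{B_{N,j}} w_\phi(y)\bigl[\cos(x_i-x_j)-\cos(y-z)\bigr] dz\, dy.
\end{equation*}
A second-order Taylor expansion around $(x_i,x_j)$ gives $\cos(x_i-x_j)-\cos(y-z) = \sin(x_i-x_j)[(y-x_i)-(z-x_j)] + O(1/N^2)$, and, combined with $w_\phi(y) = w_\phi(x_i) + O(1/N)$, the leading-order contribution to the double integral reduces to
\begin{equation*}
w_\phi(x_i)\sin(x_i-x_j)\Bigl[\int_{B_{N,i}}(y-x_i) dy \cdot |B_{N,j}| - |B_{N,i}| \cdot \int_{B_{N,j}}(z-x_j) dz\Bigr] = 0,
\end{equation*}
because $|B_{N,i}|=|B_{N,j}|=2\pi/N$ exactly. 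Each remaining contribution is $O(1/N^4)$ per pair $(i,j)$, and summing over the $N^2$ pairs using $|f|\leq 1$ and the uniform bounds on $w_\phi,w_\phi'$ yields $|\int_S w_\phi\, \Upsilon_s\, dy|=O(1/N^2)$ uniformly. Hence $\int_{t_0}^{t}|D\theta(u_{\theta_N(s-)})[\Upsilon_s]| ds = O(T_f(N)/N^2) = O(1/N)$.

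Combining the two bounds, $\sup_{t_0\leq t\leq T_f(N)}|I_2^N(t_0,t)|=O(1/N)+O(N^{\eta-1/2})$ on $\Omega_N$, and since $\mathbf{P}(\Omega_N)\to 1$, the convergence in probability \eqref{eq:ctrl_I2} follows. The main difficulty lies in the cancellation identity for the on-manifold term: without it, one recovers only an $O(\tau_f)$ bound, and one must exploit the evenly-spaced grid structure to bring it down to $O(1/N^2)$ per unit time.
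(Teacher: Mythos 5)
Your proof is correct, but the way you obtain the key cancellation is genuinely different from the paper's. The paper splits $D\theta(u_{\theta_N})[\Upsilon_s]$ further by introducing an intermediate field $\tilde{\Upsilon}_s$ in which $f(U_{N,j}(s-))$ is replaced by $f(u_{\theta_N}(x_j))$ and the integral term uses the \emph{smooth} profile $u_{\theta_N}$; it then computes $\langle v_{\theta_N},\tilde{\Upsilon}_s\rangle_{\theta_N}=\frac{\pi A^2}{N}+o(\frac1N)$ (a Riemann-sum/Euler--Maclaurin correction, via Lemmas \ref{lem:approx_int} and \ref{lem:computations_Ai}) and $\langle v_{\theta_N},\Upsilon_s-\tilde{\Upsilon}_s\rangle_{\theta_N}=-\frac{\pi A^2}{N}+o(\frac1N)$ (a separate computation involving $\Delta_N$ and the bound \eqref{eq:prox_UN_utheta}), and concludes by cancellation of the two $O(1/N)$ drifts. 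You instead exploit that in $\Upsilon_s$ itself the sum and the integral carry the \emph{same} step-function values $f(U_{N,j}(s))$ (since $U_N(s)(\cdot)$ is piecewise constant on the $B_{N,j}$), so that these factor out and the only discrepancy sits in the kernel, $\cos(x_i-x_j)$ versus $\cos(y-z)$, plus the evaluation point of $w_\phi$; the first-order term of that discrepancy vanishes exactly because $\int_{B_{N,i}}(y-x_i)\,dy$ and $|B_{N,i}|$ do not depend on $i$ on the uniform grid. This buys you a stronger and cleaner bound, $|D\theta(u_\phi)[\Upsilon_s]|=O(N^{-2})$ uniformly, versus the paper's $o(1/N)$ (which carries extra cross terms of order $N^{\eta-3/2}$ coming from $f(U_{N,j})-f(u_{\theta_N}(x_j))$), and it avoids the two separate asymptotic computations entirely. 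Your treatment of the off-manifold remainder $\bigl(D\theta(U_N(s-))-D\theta(u_{\theta_N})\bigr)[\Upsilon_s]$ coincides with the paper's first term. Two points worth making explicit in a write-up: the mismatch between $f(U_{N,j}(s-))$ in the sum and $f(U_N(s)(y))$ in the integral of \eqref{eq:def_Upsilon} is harmless because $U_N(s-)=U_N(s)$ for Lebesgue-a.e.\ $s$; and the uniform Lipschitz bound on $g\mapsto D\theta(g)$ over $B(\mathcal{U},\varepsilon_0)$ follows from the $C^3$ regularity of $\theta$ in Proposition \ref{prop:ex_isochron} together with compactness, which is also what the paper uses.
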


\begin{lem}\label{lem:I3}
For any $t_0,t\in [T_0(N),T_f(N)]$, $t_0\leq t$, we have
\begin{equation}\label{eq:ctrl_I3}
I_3^N(t_0,t)=\widetilde{\Theta}_N(t_0,t) + J_3^N(t_0,t)
\end{equation}
where $\sup_{s\in (t_0,T_f(N))} \mathbf{E} \left( \left\vert J_3^N(t_0,s) \right\vert \right) \xrightarrow[N\to\infty]{}0$ and $\widetilde{\Theta}_N $ is a real martingale with quadratic variation $$[\widetilde{\Theta}_N]_t=\dfrac{1}{N^2}\sum_{j=1}^N\int_{t_0}^t \Phi(x_j,\theta(U_N(s-))) f(u_{\theta(U_N(s-))}(x_j))ds$$ with $\Phi$ defined in \eqref{eq:def_Phi}.
\end{lem}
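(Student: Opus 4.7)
Since $M_{N}$ is a purely discontinuous martingale whose jumps at the jump times of $Z_{N,j}$ are given by the deterministic profile $\chi_{j}:= \tfrac{2\pi}{N}\sum_{i}\cos(x_{i}-x_{j})\mathbf{1}_{B_{N,i}}$, linearity of the map $h\mapsto D\theta(U_{N}(s-))[h]$ yields
\begin{equation*}
I_{3}^{N}(t_{0},t)= \sum_{j=1}^{N}\int_{t_{0}}^{t} D\theta(U_{N}(s-))[\chi_{j}] \left( dZ_{N,j}(s)-\lambda_{N,j}(s)\,ds\right).
\end{equation*}
The plan is to replace $D\theta(U_{N}(s-))[\chi_{j}]$ by its deterministic leading term $G_{j}(\phi):=-\tfrac{2\pi}{N}\sin(x_{j}+\phi)$ evaluated at $\phi=\theta_{N}(s-):=\theta(U_{N}(s-))$ (well defined on the good event of Theorem~\ref{thm:UN_close_U} via Lemma~\ref{lem:isochron_close}), and to treat the resulting residual in $L^{2}$.

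\textbf{Identification of the leading term.} This proceeds in two steps. (i) Theorem~\ref{thm:UN_close_U} and Lemma~\ref{lem:isochron_close} imply, on an event of probability tending to $1$, the uniform bound $\Vert U_{N}(s)-u_{\theta_{N}(s)}\Vert_{2}=O(N^{\eta-1/2})$ on $[T_{0}(N),N\tau_{f}]$. A direct calculation gives $\Vert\chi_{j}\Vert_{2}=O(N^{-1})$, and since $\theta$ is $\mathcal{C}^{\infty}$ with $D^{2}\theta$ bounded on $B(\mathcal{U},\varepsilon_{0})$ by Proposition~\ref{prop:ex_isochron},
\begin{equation*}
\bigl\vert D\theta(U_{N}(s-))[\chi_{j}]- D\theta(u_{\theta_{N}(s-)})[\chi_{j}]\bigr\vert \leq C\Vert U_{N}(s-)-u_{\theta_{N}(s-)}\Vert_{2}\Vert\chi_{j}\Vert_{2}=O(N^{\eta-3/2}).
\end{equation*}
(ii) By \eqref{eq:diff_theta}, $D\theta(u_{\phi})[\chi_{j}]= \langle v_{\phi},\chi_{j}\rangle_{\phi}/\Vert v_{\phi}\Vert_{\phi}$. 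A Riemann-sum approximation replaces the piecewise constant $\chi_{j}$ by $\cos(\cdot-x_{j})$, and then the spectral identity $T_{\phi}v_{\phi}=v_{\phi}$ from Proposition~\ref{prop:spect_L_phi}, together with $v_{\phi}(x_{j})=-A\sin(x_{j}+\phi)$ and $\Vert v_{\phi}\Vert_{\phi}^{2}=A^{2}\mathcal{I}(\sin^{2})=A^{2}$ from Lemma~\ref{lem:computation_A}, gives $D\theta(u_{\phi})[\chi_{j}]=G_{j}(\phi)+O(N^{-2})$. Combining (i) and (ii), $|D\theta(U_{N}(s-))[\chi_{j}]-G_{j}(\theta_{N}(s-))|=O(N^{\eta-3/2})$.

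\textbf{Construction of $\widetilde{\Theta}_{N}$ and control of $J_{3}^{N}$.} Define
\begin{equation*}
\widetilde{\Theta}_{N}(t_{0},t):= \sum_{j=1}^{N}\int_{t_{0}}^{t} G_{j}(\theta_{N}(s-))\left(dZ_{N,j}(s)-\lambda_{N,j}(s)\,ds\right),
\end{equation*}
which is a real martingale with predictable quadratic variation $\tfrac{1}{N^{2}}\sum_{j}\int\Phi(x_{j},\theta_{N}(s-)) f(U_{N,j}(s-))\,ds$; a Cauchy--Schwarz step converting the discrete sum into $\Vert U_{N}-u_{\theta_{N}}\Vert_{2}$, combined with the Lipschitz continuity of $f$, shows that replacing $f(U_{N,j}(s-))$ by $f(u_{\theta_{N}(s-)}(x_{j}))$ produces an $O(N^{\eta-1/2})$ correction, yielding the claimed form of the quadratic variation up to a negligible term. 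The residual $J_{3}^{N}:= I_{3}^{N}-\widetilde{\Theta}_{N}$ is itself a martingale, and Itô's isometry together with the uniform bound $\lambda_{N,j}\leq 1$ gives
\begin{equation*}
\mathbf{E}\bigl[J_{3}^{N}(t_{0},t)^{2}\bigr]= \mathbf{E}\sum_{j=1}^{N}\int_{t_{0}}^{t}\bigl(D\theta(U_{N}(s-))[\chi_{j}]-G_{j}(\theta_{N}(s-))\bigr)^{2}\lambda_{N,j}(s)\,ds\leq C\,N\cdot N\tau_{f}\cdot N^{2\eta-3}=O(N^{2\eta-1}).
\end{equation*}
Since $\eta<\tfrac{1}{4}$, $\mathbf{E}|J_{3}^{N}(t_{0},t)|\leq \sqrt{\mathbf{E}[(J_{3}^{N})^{2}]}=O(N^{\eta-1/2})\to 0$ uniformly in $t_{0}\leq t\leq T_{f}(N)$, which concludes the proof.

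\textbf{Main obstacle.} The delicate point is the Step (ii) identification: extracting the deterministic prefactor $-\tfrac{2\pi}{N}\sin(x_{j}+\phi)$ from $D\theta(u_{\phi})[\chi_{j}]$ with $O(N^{-2})$ accuracy. Any coarser estimate would spoil the $O(N^{2\eta-1})$ control on $\mathbf{E}[(J_{3}^{N})^{2}]$, since the sum over $j\leq N$ and the $[t_{0},N\tau_{f}]$ integration together amplify the per-term error by $N^{2}$. This accuracy hinges on the Riemann-sum comparison between the piecewise constant $\chi_{j}$ and the smooth cosine, combined with the crucial cancellation that comes from $v_{\phi}$ being fixed by $T_{\phi}$.
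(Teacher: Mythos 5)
Your proof is correct and follows essentially the same strategy as the paper's: split $I_3^N$ into a principal martingale whose predictable quadratic variation is computed explicitly, plus a remainder martingale controlled in $L^2$ via the proximity estimate \eqref{eq:prox_UN_utheta} and the bound $\Vert\chi_j\Vert_2=O(N^{-1})$. The only (harmless) variations are that you absorb the $O(N^{-2})$ discretization error into the remainder by taking the integrand of $\widetilde{\Theta}_N$ to be the explicit coefficient $-\frac{2\pi}{N}\sin(x_j+\theta_N)$ rather than $D\theta(u_{\theta_N})[\chi_j]$, and that you identify this coefficient via the eigenvector identity $T_\phi v_\phi=v_\phi$ instead of the paper's direct trigonometric expansion of $E_N$ — a slightly cleaner route to the same constant.
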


\begin{lem}\label{lem:I4} We have
$$\sup_{t_0 \in [T_0(N),T_f(N)]}  \sup_{t\in (t_0, T_f(N))}  \mathbf{E} \left( \left\vert I_4^N(t_0,t)\right\vert \right) \xrightarrow[N\to\infty]{}0.$$
\end{lem}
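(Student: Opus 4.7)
My plan is to compensate the Poisson measures so as to split $I_4^N=\mathcal M_N+\mathcal D_N$ with $\mathcal M_N$ a real martingale and $\mathcal D_N=\sum_j\int_{t_0}^t\lambda_{N,j}(s)R_j(s)\,ds$ the predictable drift, where $R_j(s):=\theta(U_N(s-)+\Delta_j)-\theta(U_N(s-))-D\theta(U_N(s-))[\Delta_j]$ and $\Delta_j:=\sum_i\tfrac{2\pi}{N}\cos(x_i-x_j)\mathbf 1_{B_{N,i}}$ satisfies $\|\Delta_j\|_2=O(1/N)$. On the event $\Omega_N$ of overwhelming probability from Theorem \ref{thm:UN_close_U}, $U_N(s-)$ stays in a fixed neighborhood of $\mathcal U$ for all $s\in[T_0(N),T_f(N)]$, so the $C^3$-regularity of $\theta$ (Proposition \ref{prop:ex_isochron}) yields the uniform Taylor bound $|R_j(s)|\leq C\|\Delta_j\|_2^2=O(N^{-2})$.

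The martingale is easy: Burkholder--Davis--Gundy together with $\lambda_{N,j}\leq 1$ gives
\[
\mathbf E|\mathcal M_N|^2\leq C\sum_j\int_{T_0(N)}^{T_f(N)}R_j(s)^2\,ds=O(N\cdot N\tau_f\cdot N^{-4})=O(N^{-2}),
\]
hence $\mathbf E|\mathcal M_N|\to 0$ uniformly in $t_0\leq t$. For the drift, I refine the expansion to $R_j(s)=\tfrac12 D^2\theta(U_N(s-))[\Delta_j,\Delta_j]+O(\|\Delta_j\|_2^3)$; the cubic remainder contributes $O(N\cdot N\tau_f\cdot N^{-3})=O(N^{-1})$ to $\mathcal D_N$. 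Using the proximity bound $\sup_s\|U_N(s)-u_{\theta_N(s)}\|_2=O(N^{\eta-1/2})$ of \eqref{eq:prox_UN_utheta} together with the Lipschitz continuity of $D^2\theta$ and $f$, I replace $U_N(s-)$ by $u_{\theta_N(s)}$ and $\lambda_{N,j}(s)$ by $f(u_{\theta_N(s)}(x_j))$ with vanishing error. A Riemann sum with $\Delta_j\approx\tfrac{2\pi}{N}\cos(\cdot-x_j)$ then rewrites the quadratic part of $\mathcal D_N$ as $\tfrac{1}{N}\int_{t_0}^t\Psi(\theta_N(s))\,ds+o(1)$, with
\[
\Psi(\phi):=\pi\int_S f(u_\phi(y))\,D^2\theta(u_\phi)\bigl[\cos(\cdot-y),\cos(\cdot-y)\bigr]\,dy.
\]

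It remains to show $\Psi\equiv 0$. By the rotational invariance of the microscopic system, $\Psi$ is independent of $\phi$, so it suffices to treat $\phi=0$. Decomposing $\cos(\cdot-y)=(\cos y)\,u_0/A-(\sin y)\,v_0/A$ and using $\int_{-\pi}^{\pi}\sin y\cos y\,f(A\cos y)\,dy=0$ by parity, the cross term drops and $\Psi(0)$ becomes a positive linear combination of $D^2\theta(u_0)[u_0,u_0]$ and $D^2\theta(u_0)[v_0,v_0]$. Differentiating the identity $\theta(u_\phi)\equiv\phi$ twice in $\phi$ (using $\partial_\phi u_\phi=v_\phi$ and $\partial_\phi^2 u_\phi=-u_\phi$) gives $D^2\theta(u_0)[v_0,v_0]=D\theta(u_0)[u_0]=0$; whereas the $x\mapsto -x$ symmetry, preserved both by $\mathcal U$ and by the flow $\psi_t$, forces $\theta((1+\epsilon)u_0)\equiv 0$ and hence $D^2\theta(u_0)[u_0,u_0]=0$. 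Consequently $\Psi\equiv 0$ and $\mathcal D_N\to 0$ in $L^1$ uniformly, which combined with the martingale bound yields the lemma.

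The main obstacle will be precisely the cancellation $\Psi\equiv 0$: the naive bound on $\mathcal D_N$ only produces an $O(\tau_f)$ control, because the Itô correction of a jump process with jumps of size $1/N$, summed over $N$ particles firing at rate $O(1)$ on a time interval of length $N\tau_f$, is a priori of order one, so the vanishing can only be read off from the structural identities above. Verifying that these identities are compatible with the algebraic expression \eqref{eq:diff2_theta} requires care with the normalization conventions for $\alpha_\phi^\circ$ and $\alpha_\phi^\gamma$.
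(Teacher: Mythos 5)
Your decomposition, the order of magnitude bookkeeping, and the identification of the crucial cancellation are all correct, and you arrive at the lemma by a route that is genuinely different from the paper's.

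The paper's proof (Section \ref{S:lemmas_pr}) splits $I_4$ into a compensated martingale $L_1$, two replacement errors $L_2, L_3$, and a leading drift $L_4$, exactly as you do, with the same $O(N^{-1})$ bounds on the martingale and cubic parts and $O(N^{\eta-1/2})$ on the replacement. The real content in both cases is that the remaining $O(1)$ drift vanishes. The paper handles this by an explicit computation: Lemma \ref{lem:computations_Dtheta2_chi} plugs the formula \eqref{eq:diff2_theta} into $D^2\theta(u_\phi)[\chi_j,\chi_j]$ and finds the leading order to be $\propto N^{-2}\cos(x_j+\phi)\sin(x_j+\phi)$, whose Riemann sum against $f(u_\phi(x_j))$ vanishes by the parity of $\int_S f(A\cos y)\cos y \sin y \,dy$. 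You instead expand $\cos(\cdot-y)$ in the $(u_0,v_0)$ basis, kill the cross term by the very same parity, and dispose of the two diagonal terms by structural identities: $D^2\theta(u_0)[v_0,v_0]=D\theta(u_0)[u_0]=0$ from differentiating $\theta(u_\phi)\equiv\phi$ twice, and $D^2\theta(u_0)[u_0,u_0]=0$ from the $x\mapsto-x$ equivariance $\theta(\mathcal{R}g)=-\theta(g)$, which forces $\theta((1+\epsilon)u_0)\equiv 0$. Both identities are correct; one can also verify them directly from the mild equation for $\xi_t=D^2\Psi_t(u_\phi)[h,h]$ by noting that $\beta_\phi(u_\phi,u_\phi)=\beta_\phi(v_\phi,v_\phi)=0$ by parity. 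In effect the paper's explicit expansion secretly relies on these same two vanishings (they are what make the $\sin^2$ and $\cos^2$ contributions to $D^2\theta[\chi_j,\chi_j]$ disappear at order $N^{-2}$), whereas you exhibit them directly as symmetry consequences and bypass the algebra of \eqref{eq:diff2_theta}. Your closing caution about the normalizations of $\alpha_\phi^\circ$ and $\alpha_\phi^\gamma$ is well-placed: as written these functionals produce a projection onto $\operatorname{Span}(v_\phi)$ only after an extra division by $\Vert v_\phi\Vert_\phi$, so one does need care to reconcile \eqref{eq:diff2_theta} with the identities $D^2\theta(u_0)[u_0,u_0]=D^2\theta(u_0)[v_0,v_0]=0$, and the symmetry route avoids that bookkeeping entirely. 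The approaches are of comparable length; yours trades the computation of Lemma \ref{lem:computations_Dtheta2_chi} for two short but somewhat softer equivariance arguments, and it makes transparent \emph{why} there is no $O(1)$ drift along $\mathcal{U}$, rather than verifying it by cancellation of explicit coefficients.
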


The proofs of these fours lemmas are postponed to Section \ref{S:lemmas_pr}.
Combining them, we can define some random variable $J_N(t_0,t)$ such that  $\sup_{s\in (t_0,T_f(N))} \mathbf{E} \left( \left\vert J^N(t_0,s) \right\vert \right) \xrightarrow[N\to\infty]{}0$ and for any $t_0,t\in [T_0(N),T_f(N)]$, $t_0\leq t$,
$$\theta\left(U_N(t)\right) = \theta\left(U_N(t_0)\right) + J^N(t_0,t) +\widetilde{\Theta}_N(t_0,t). $$
Recall the change of variables used to define $\widehat{\theta}$ in \eqref{eq:def_theta_hat}. Define similarly $\vartheta_N(\tau_0,\tau):=J^N(N\tau_0,N\tau)$ and $\Theta_N(\tau_0,\tau)=\widetilde{\Theta}_N(N\tau_0,N\tau)$ for $\tau_0=t_0/N$ and $\tau=t/N$. Then we have exactly shown \eqref{eq:theta_hat_decomp}.
\end{proof}

\subsection{Control of the terms of the decomposition} \label{S:lemmas_pr}

For simplicity, we may write $I_k(t)$ instead of $I_k^N(t_0,t)$. In the following, we use the notations $g_s=O(\alpha_N)$ with $g:s\in I \mapsto g_s \in L^2(S)$ for some time interval $I$ and a sequence $(\alpha_N)$ independent of the time $s$ when there exists some $C$ (independent of $N$) such that for all $x\in S$, $\sup_{s\in I} \sup_{x\in S}\vert g_s(x) \vert \leq C \alpha_N$. Recall the definition of $\theta_N(t)$ in \eqref{eq:def_theta_N}. In the following proofs, this notation will be essentially used for $t=s-$, so that we write for simplicity $\theta_N=\theta_N(s-)$.

\subsubsection{Proof of Lemma \ref{lem:I1}}

Recall that $$I_1(t):=\int_{t_0}^t  D\theta \left(U_N(s-)\right)[-U_N(s)+ \cos\ast f\left( U_N(s)\right)]ds.$$
Define for $g\in L^2(S)$
$$\mathcal{V}(g):=-g+\cos\ast f(g).$$
Recall that for any $\phi\in S$, $\mathcal{L}(u_\phi)=0$ and $D\mathcal{V}(u_\phi)[h]=\mathcal{L}_\phi h$. Let $g\in B(\mathcal{U},\varepsilon_0)$, and $t\mapsto g_t:=\psi_t(g)$ defined in \eqref{eq:def_flow}, that is the flow of \eqref{eq:NFE_specific} under initial condition $g$. Note that by definition of the isochron map $\theta$ in Proposition \ref{prop:ex_isochron} and the fact that $\mathcal{U}$ consists of stationary solutions to \eqref{eq:NFE_specific}, one has that $\theta(\psi_t(g))=\theta(\psi_0(g))=\theta(g)$.
 Differentiating with respect to $t$ (recall Proposition \ref{prop:ex_isochron}) gives that $D\theta (g_t)[\partial_t g_t]=D\theta (g_t)[-g_t+\cos\ast f(g_t)]=0$. Since this is for all $t\geq 0$, taking $t=0$ gives $D\theta(g)[-g+\cos\ast f(g)]=0.$ Hence for any $s$, $ D\theta \left(U_N(s)\right)[\mathcal{V}(U_N(s))]=0$ and as $\mathcal{V}(u_{\theta(U_N(s))})=0$, we have
\begin{align*}
I_1(t)&= \int_{t_0}^t  D\theta \left(U_N(s-)\right)[\mathcal{V}(U_N(s))]ds\\&=\int_{t_0}^t  (D\theta \left(U_N(s-)\right)-D\theta \left(U_N(s)\right))[\mathcal{V}(U_N(s))]ds\\
&= \int_{t_0}^t  (D\theta \left(U_N(s-)\right)-D\theta \left(U_N(s)\right))[\mathcal{V}(U_N(s))-\mathcal{V}(u_{\theta(U_N(s))}]ds.
\end{align*}
As $\theta$ and $\mathcal{V}$ are Lipschitz continuous, as from \eqref{eq:def_UiN} a jump of the process gives a.s. at most an increment of $\frac{2\pi}{N}$ between $U_N(s-)$ and $U_N(s)$, using \eqref{eq:prox_UN_utheta} there exists some $C>0$ (independent of $N$ and of the time) such that 
$$I_1(t)\leq (t-t_0) \Vert \theta\Vert_{\text{lip}} \dfrac{2\pi}{N} \Vert \mathcal{V}\Vert_{\text{lip}} \Vert  U_N(s-) -u_{\theta(U_N(s))}\Vert_2 \leq \dfrac{C T_f(N)}{N} N^{\eta-1/2}$$
on the event $\Omega_N$ (given by Theorem \ref{thm:UN_close_U}). As $T_f(N) \propto N$ and from the choice on $\eta$, \eqref{eq:ctrl_I1} follows.

\subsubsection{Proof of Lemma \ref{lem:I2}}

We place ourselves again on the event $\Omega_N$ (given by Theorem \ref{thm:UN_close_U}) on which we have \eqref{eq:prox_UN_utheta}. Recall that $I_2(t):=\int_{t_0}^t  D\theta \left(U_N(s-)\right)\Upsilon_sds$, where the definition of $\Upsilon$ is given in \eqref{eq:def_Upsilon}.
We have
\begin{equation}\label{eq:termes_I2}
I_2(t)= \int_{t_0}^t\left(  D\theta (U_N(s-))-D\theta(u_{\theta_N})\right)\left[\Upsilon_s\right]ds+ \int_{t_0}^t  D\theta \left(u_{\theta_N}\right)\left[\Upsilon_s- \tilde{\Upsilon}_s\right]ds+\int_{t_0}^t  D\theta \left(u_{\theta_N}\right)\left[\tilde{\Upsilon}_s\right]ds,
\end{equation}
with 
\begin{equation}\label{eq:def_Upsilon_tilde}
\tilde{\Upsilon}_s(x)=\sum_{i=1}^N \left(  \dfrac{2\pi}{N}\sum_{j=1}^N \cos(x_i-x_j)f(u_{\theta_N}(x_j)) - \int_S\cos(x-y)f(u_{\theta_N}(y))dy\right)\mathbf{1}_{B_{N,i}}.
\end{equation}
From \eqref{eq:norme_Upsilon_2} and \eqref{eq:norme_Upsilon_3} we have that $\Vert \Upsilon_s\Vert_2 \leq \dfrac{C}{N}$ for some $C>0$ independent of $N$ and $s$, thus, for the first term of \eqref{eq:termes_I2}, as done before using \eqref{eq:prox_UN_utheta},
$$  \int_{t_0}^t\left(  D\theta (U_N(s-))-D\theta(u_{\theta(U_N(s-))})\right)\left[\Upsilon_s\right]ds \leq (t-t_0) C \Vert \theta \Vert_{\text{lip}} N^{\eta-1/2} \dfrac{C}{N}\leq \dfrac{CT_f(N)}{N} N^{\eta-1/2}.$$

For the third term of \eqref{eq:termes_I2}, using \eqref{eq:diff_theta}, we have $\displaystyle D\theta \left(u_{\theta(U_N(s-))}\right)\left[\tilde{\Upsilon}_s\right] = \dfrac{\langle v_{\theta(U_N(s-))},\tilde{\Upsilon}_s\rangle_{\theta(U_N(s-))}}{\Vert v_{\theta(U_N(s-))} \Vert_{\theta(U_N(s-))}}$. As shown in \eqref{eq:A_kappa&Isin}, $\Vert v_{\theta_N} \Vert_{\theta_N}=A$. From trigonometric formula one has

\begin{align}
\langle v_{\theta_N},\tilde{\Upsilon}_s\rangle_{\theta_N}&= \langle v_{\theta_N}, \dfrac{2\pi}{N}\sum_{i,j=1}^N \cos(x_i-x_j)f(u_{\theta_N}(x_j))\mathbf{1}_{B_{N,i}}- \int_S\cos(\cdot-y)f(u_{\theta_N}(y))dy \rangle_{\theta_N}\notag\\
&= \left(  \dfrac{2\pi}{N}\sum_{j=1}^N \cos(x_j+\theta_N)f(u_{\theta_N}(x_j))\right)\left( \sum_{i=1}^N \cos(x_i+\theta_N)  \langle v_{\theta_N}, \mathbf{1}_{B_{N,i}}\rangle_{\theta_N}\right) \notag\\
&\quad + \left(  \dfrac{2\pi}{N}\sum_{j=1}^N \sin(x_j+\theta_N)f(u_{\theta_N}(x_j))\right) \left( \sum_{i=1}^N \sin(x_i+\theta_N)  \langle v_{\theta_N}, \mathbf{1}_{B_{N,i}}\rangle_{\theta_N}\right) \notag\\
&\quad - \left( \int_S \cos(y+\theta_N)f(u_{\theta_N}(y))dy\right)  \langle v_{\theta_N},\cos(\cdot+\theta_N)\rangle_{\theta_N}\notag\\
&\quad - \left( \int_S \sin(y+\theta_N)f(u_{\theta_N}(y))dy\right)  \langle v_{\theta_N},\sin(\cdot+\theta_N)\rangle_{\theta_N}. \label{eq:tilde_Upsilon_aux}
\end{align}
By invariance of rotation and with Lemma \ref{lem:computation_A} we have $ \langle v_{\theta_N},\cos(\cdot+\theta_N)\rangle_{\theta_N} = \mathcal{I}(\sin\cos)=0$, and similarly $ \int_S \sin(y+\theta_N)f(u_{\theta_N}(y))dy=0$. We can then write \eqref{eq:tilde_Upsilon_aux} as
$\langle v_{\theta_N},\tilde{\Upsilon}_s\rangle_{\theta_N}=A_1 A_2 + A_3 A_4.$ 
From the computations \eqref{eq:A1_eq}, \eqref{eq:A2_eq}, \eqref{eq:A3_eq} and \eqref{eq:A4_eq} of Lemma \ref{lem:computations_Ai}, we obtain that 
\begin{equation}
\label{eq:Upsilon_v}
\langle v_{\theta_N},\tilde{\Upsilon}_s\rangle_{\theta_N}:= \dfrac{\pi A^2}{N}+  o\left(\dfrac{1}{N}\right).
\end{equation}

For the second term of \eqref{eq:termes_I2}, we have with Lemma \ref{lem:computation_A} that $ \langle v_{\theta_N},\sin(\cdot+\theta_N)\rangle_{\theta_N} =-A\mathcal{I}(\sin^2)=-A$ thus
\begin{align*}
\langle v_{\theta_N}, \Upsilon_s- \tilde{\Upsilon}_s \rangle_{\theta_N} &= A_2 \left( \dfrac{2\pi}{N} \sum_{j=1}^N \cos(x_j+\theta_N) \left( f\left(U_N(s-)(x_j)\right)-f\left(u_{\theta_N}(x_j)\right)\right) \right) \\
&\quad + A_4 \left( \dfrac{2\pi}{N} \sum_{j=1}^N \sin(x_j+\theta_N) \left( f\left(U_N(s-)(x_j)\right)-f\left(u_{\theta_N}(x_j)\right)\right) \right)  \\
&\quad + A \int_S \sin(y+\theta_N)f(U_N(s-)(y))dy.
\end{align*}
Let us show that 
\begin{equation}
\label{eq:aux_DN}
D_N:=\dfrac{2\pi}{N} \sum_{j=1}^N \cos(x_j+\theta_N) \left( f\left(U_N(s-)(x_j)\right)-f\left(u_{\theta_N}(x_j)\right)\right) = O\left( N^{\eta-1/2}\right).
\end{equation}
Setting $\widehat{u}_{\theta_N}(y):=\sum_{k=1}^N u_{\theta_N}(x_k)\mathbf{1}_{y\in B_{N,k}}$, we have
\begin{align*}
\vert D_N \vert &= \left\vert \sum_{j=1}^N \cos(x_j+\theta_N)  \int_S \left( f\left(U_N(s-)(x_j)\right)-f\left(u_{\theta_N}(x_j)\right)\right)\mathbf{1}_{y\in B_{N,j}}dy \right\vert\\
&\leq \Vert f \Vert_{\text{lip}} \sum_{j=1}^N \int_S \left\vert U_N(s-)(y)-\widehat{u}_{\theta_N}(y)\right\vert \mathbf{1}_{y\in B_{N,j}}dy.
\end{align*} 
With Cauchy–Schwarz inequality and Jensen's discrete inequality, we have
\begin{align*}
\vert D_N \vert &\leq C_f \sum_{j=1}^N \left( \int_S \left\vert U_N(s-)(y)-\widehat{u}_{\theta_N}(y)\right\vert^2 \mathbf{1}_{y\in B_{N,j}} dy\right)^{1/2} \left( \int_S \mathbf{1}_{y\in B_{N,j}}dy \right)^{1/2}\\
&= \dfrac{\sqrt{2\pi N} C_f }{N} \sum_{j=1}^N \left( \int_S \left\vert U_N(s-)(y)-\widehat{u}_{\theta_N}(y)\right\vert^2 \mathbf{1}_{y\in B_{N,j}} dy\right)^{1/2}\\
&\leq \sqrt{2\pi N} C_f \left( \dfrac{1}{N} \sum_{j=1}^N \int_S \left\vert U_N(s-)(y)-\widehat{u}_{\theta_N}(y)\right\vert^2 \mathbf{1}_{y\in B_{N,j}} dy\right)^{1/2}\\
&= C\sqrt{N} \left( \dfrac{1}{N} \int_S \left\vert U_N(s-)(y)-\widehat{u}_{\theta_N}(y)\right\vert^2  dy\right)^{1/2}\\
&= C \left\Vert U_N(s-)-\widehat{u}_{\theta_N}\right\Vert_2 \leq C \left\Vert U_N(s-)-u_{\theta_N}\right\Vert_2  + C \left\Vert u_{\theta_N}-\widehat{u}_{\theta_N}\right\Vert_2, 
\end{align*}
hence with \eqref{eq:prox_UN_utheta} and as $\left\Vert u_{\theta_N}-\widehat{u}_{\theta_N}\right\Vert_2=O(1/N)$, we have indeed shown that  $D_N = O\left( N^{\eta-1/2}\right)$. Similarly, one can show that $\dfrac{2\pi}{N} \sum_{j=1}^N \sin(x_j+\theta_N) \left( f\left(U_N(s-)(x_j)\right)-f\left(u_{\theta_N}(x_j)\right)\right)= O\left( N^{\eta-1/2}\right)$. Using Lemma \ref{lem:computations_Ai} and as $\int_S \sin(y+\theta_N)f(u_{\theta_N}(y))dy=0$, we have
\begin{multline*}
\langle v_{\theta_N}, \Upsilon_s- \tilde{\Upsilon}_s \rangle_{\theta_N} = O\left( N^{\eta-3/2}\right) + A \int_S \sin(y+\theta_N)\left( f(U_N(s-)(y)) - f(u_{\theta_N}(y))\right)dy \\ -A  \dfrac{2\pi}{N} \sum_{j=1}^N \sin(x_j+\theta_N) \left( f\left(U_N(s-)(x_j)\right)-f\left(u_{\theta_N}(x_j)\right)\right).
\end{multline*}
Using Taylor's expansion, we obtain
\begin{equation}\label{eq:aux_Upsilon_Upsilontilde}
\langle v_{\theta_N}, \Upsilon_s- \tilde{\Upsilon}_s \rangle_{\theta_N} = o\left( \dfrac{1}{N}\right) + A~  \Delta_N,
\end{equation}
where $$\Delta_N = \int_S \sin(y+\theta_N)f'(u_{\theta_N}(y))\left(U_N(s-)(y) -u_{\theta_N}(y))\right)dy - \dfrac{2\pi}{N} \sum_{j=1}^N \sin(x_j+\theta_N)f'(u_{\theta_N}(x_j))  \left( U_N(s-)(x_j)-u_{\theta_N}(x_j)\right).$$
Define $\widehat{u}_{\theta_N}(y):=\sum_{j=1}^N u_{\theta_N}(x_j)\mathbf{1}_{B_{N,j}}(y)$, we introduce it in $\Delta_N$ so that
\begin{align}\label{eq:aux_Delta}
\Delta_N&= \sum_{j=1}^N \int_{B_{N,j}} \left[  \sin(y+\theta_N)f'(u_{\theta_N}(y))\left(U_N(s-)(y) -u_{\theta_N}(y))\right) - \sin(x_j+\theta_N)f'(u_{\theta_N}(x_j))  \left( U_N(s-)(y)-\widehat{u}_{\theta_N}(y)\right)\right] dy\notag\\
&=  \sum_{j=1}^N \int_{B_{N,j}}  \left(  U_N(s-)(y)-\widehat{u}_{\theta_N}(y)\right) \left( \sin(y+\theta_N)f'(u_{\theta_N}(y))-  \sin(x_j+\theta_N)f'(u_{\theta_N}(x_j)) \right)dy \\
&\quad +  \sum_{j=1}^N \int_{B_{N,j}}  \sin(y+\theta_N)f'(u_{\theta_N}(y)) \left( \widehat{u}_{\theta_N}(y) - u_{\theta_N}(y)\right) dy. \notag
\end{align}
For the first term of $\Delta_N$, let $\alpha_N(y):= \sin(y+\theta_N)f'(u_{\theta_N}(y))-  \sin(x_j+\theta_N)f'(u_{\theta_N}(x_j))$, one has with Cauchy–Schwarz inequality that
$$\sum_{j=1}^N \int_S \left(  U_N(s-)(y)-\widehat{u}_{\theta_N}(y)\right)\alpha_N(y)dy\leq \sum_{j=1}^N \left( \int_S \left(  U_N(s-)(y)-\widehat{u}_{\theta_N}(y)\right)^2 \mathbf{1}_{B_{N,j}}dy\right)^{1/2}\left( \int_{B_{n,j}}\alpha_N(y)^2dy\right)^{1/2}.$$
As $\int_{B_{n,j}}\alpha_N(y)^2dy \leq \int_{B_{n,j}}(y-x_j)^2dy = O\left(\dfrac{1}{N^{3/2}}\right)$, for some $C>0$, using Jensen’s inequality
\begin{align*}
\sum_{j=1}^N \int_S \left(  U_N(s-)(y)-\widehat{u}_{\theta_N}(y)\right)\alpha_N(y)dy &\leq \dfrac{C}{\sqrt{N}} \dfrac{1}{N} \sum_{j=1}^N \left( \int_S \left(  U_N(s-)(y)-\widehat{u}_{\theta_N}(y)\right)^2 \mathbf{1}_{B_{N,j}}dy\right)^{1/2}\\
&\leq \dfrac{C}{\sqrt{N}} \sqrt{ \dfrac{1}{N} \sum_{j=1}^N  \int_S \left(  U_N(s-)(y)-\widehat{u}_{\theta_N}(y)\right)^2 \mathbf{1}_{B_{N,j}}dy}\\
&\leq  \dfrac{C}{\sqrt{N}} \sqrt{ \dfrac{1}{N}\Vert U_N(s-)-\widehat{u}_{\theta_N} \Vert_2^2}.
\end{align*}
As $\Vert u_\theta-\widehat{u}_{\theta_N} \Vert_2^2=O\left(\dfrac{1}{N^2}\right)$ and with \eqref{eq:prox_UN_utheta}, we obtain that the first term of \eqref{eq:aux_Delta} is in $O\left( \dfrac{N^{\eta-1/2}}{N}\right)$. For the second term of $\Delta_N$, we have
\begin{align*}
&\sum_{j=1}^N \int_{B_{N,j}}  \sin(y+\theta_N)f'(u_{\theta_N}(y)) \left( \widehat{u}_{\theta_N}(y) - u_{\theta_N}(y)\right) dy\\
&= \sum_{j=1}^N \int_{B_{N,j}}  \sin(y+\theta_N)f'(u_{\theta_N}(y)) \left( A\cos(x_j+\theta_N) -A\cos(y+\theta_N)\right) dy\\
&=A \sum_{j=1}^N \int_{B_{N,j}}  \sin(y+\theta_N)f'(u_{\theta_N}(y)) \sin(x_j+\theta_N)(y-x_j) dy+o\left(\dfrac{1}{N}\right)\\
&= A \sum_{j=1}^N \sin(x_j+\theta_N)^2 f'(u_{\theta_N}(x_j)) \int_{B_{N,j}} (y-x_j)dy +o\left(\dfrac{1}{N}\right)\\
&=A \sum_{j=1}^N \sin(x_j+\theta_N)^2 f'(u_{\theta_N}(x_j)) \left( - \dfrac{2\pi^2}{N^2}\right) +o\left(\dfrac{1}{N}\right)\\
&= - \dfrac{\pi}{N} \int_S A \sin(y+\theta_N)^2 f'(u_{\theta_N}(y))dy +o\left(\dfrac{1}{N}\right)= - \dfrac{A\pi}{N}  +o\left(\dfrac{1}{N}\right).
\end{align*}
Coming back to \eqref{eq:aux_Upsilon_Upsilontilde}, we have then that 
\begin{equation}
\label{eq:Upsilon_Upsilon_tilde}
\langle v_{\theta_N}, \Upsilon_s- \tilde{\Upsilon}_s \rangle_{\theta_N}=-\dfrac{\pi A^2}{N}+  o\left(\frac{1}{N}\right).
\end{equation}
This term \eqref{eq:Upsilon_Upsilon_tilde} cancels with the previous computation \eqref{eq:Upsilon_v} up to some rest of order $o\left(\dfrac{1}{N}\right)$. We obtain then  \eqref{eq:ctrl_I2} after integrating on $(t_0,t)$ and using $T_f(N) \propto N$.

\subsubsection{Proof of Lemma \ref{lem:I3}}

Recall that $I_3(t):=\int_{t_0}^t  D\theta \left(U_N(s-)\right)[dM_N(s)]$. Recall the definition of $\chi_j$ in \eqref{eq:def_chi} and the compensated measure $\tilde{\pi}_j$, we can re-write the term $I_3^N(t_0,t)$ and introduce $D\theta(u_{\theta_N})$:
\begin{equation}
\label{eq:I3}
I_3(t)=\sum_{j=1}^N \int_{t_0}^t \int_0^\infty \left(  D\theta \left(U_N(s-)\right)- D\theta \left(u_{\theta_N}\right)\right)[\chi_j(s,z)]\tilde{\pi}_j(ds,dz) + \sum_{j=1}^N \int_{t_0}^t \int_0^\infty  D\theta \left(u_{\theta_N}\right)[\chi_j(s,z)]\tilde{\pi}_j(ds,dz).
\end{equation}
Let us focus first on 
$$Q_0(t):=\sum_{j=1}^N \int_{t_0}^t \int_0^\infty \left(  D\theta \left(U_N(s-)\right)- D\theta \left(u_{\theta_N}\right)\right)[\chi_j(s,z)]\tilde{\pi}_j(ds,dz).$$
It is a real martingale. We denote by $[Q_0]_t=\sum_{s\leq t} \left| \Delta Q_0(t) \right|^2$ its quadratic variation.  It is computed as follows (as the $(\pi_j)_{1\leq j \leq N}$ are independent, there are almost surely no simultaneous jumps so that $[\tilde{\pi}_j,\tilde{\pi}_{j'}]=0$ if $j\neq j'$):
\begin{align*}
[Q_0]_t&= \sum_{j=1}^N \int_{t_0}^t \int_0^\infty \left( \left(  D\theta \left(U_N(s-)\right)- D\theta \left(u_{\theta_N}\right)\right)[\chi_j(s,z)]\right)^2\pi_j(ds,dz)\\
&\leq C \Vert \theta \Vert_{\text{lip}}^2 \left( \sup_{t\in [t_0,t]} \Vert U_N(s-)-u_{\theta_N}\Vert_2\right)^2 \sum_{j=1}^N \int_{t_0}^t \int_0^\infty \Vert\chi_j(s,z)\Vert_2^2 \pi_j(ds,dz)\\
&\leq C N^{2\eta-1} \dfrac{1}{N^2}\sum_{j=1}^N \int_{t_0}^t \int_0^\infty \mathbf{1}_{z\leq \lambda_{N,j}(s)}  \pi_j(ds,dz),
\end{align*}
using \eqref{eq:prox_UN_utheta} and the computation \eqref{eq:norme_chi} for some constants $C>0$. Then, by Burkholder-Davis-Gundy inequality and as $f$ is bounded
$$ \mathbf{E}\left[ Q_0(t)^2 \right] \leq C \mathbf{E}\left[ \left[Q_0\right]_t \right]\leq C N^{2\eta-3} \mathbf{E}\left[\sum_{j=1}^N \int_{t_0}^t \int_0^\infty \mathbf{1}_{z\leq \lambda_{N,j}(s)}  \pi_j(ds,dz)\right]\leq CN^{2\eta-1} \dfrac{T_f(N)}{N} \xrightarrow[N\to\infty]{}0,$$
hence $Q_0(t)$ converges in $L^1$ towards 0 as $N\to\infty$ uniformly in $t$.
 
The other term $Q(t):=\sum_{j=1}^N \int_{t_0}^t \int_0^\infty  D\theta \left(u_{\theta(U_N(s-)}\right)[\chi_j(s,z)]\tilde{\pi}_j(ds,dz)$ in \eqref{eq:I3} is also a real martingale, we denote by $[Q]_t=\sum_{s\leq t} \left| \Delta Q(t) \right|^2$ its quadratic variation and it is computed as follows:
$$[Q]_t =\sum_{j=1}^N\int_{t_0}^t \int_0^\infty  \left(  D\theta \left(u_{\theta_N}\right)[\chi_j(s,z)]\right)^2\pi_{j}(ds,dz)=\sum_{j=1}^N\int_{t_0}^t \int_0^\infty  \left( \dfrac{\langle v_{\theta_N}, \chi_j(s,z) \rangle_{\theta_N}}{\Vert  v_{\theta_N} \Vert_{\theta_N}}\right)^2\pi_{j}(ds,dz),$$
where we used \eqref{eq:diff_theta}. Recall the notation $w_{ij}^{(N)}=2\pi\cos(x_i-x_j)$, from the computation \eqref{eq:A_kappa&Isin}, $\Vert  v_{\theta_N} \Vert_{\theta_N}=A$ hence
\begin{align}\label{eq:Qt_aux1}
[Q]_t
&= \dfrac{1}{A^2}\sum_{j=1}^N\int_{t_0}^t \int_0^\infty  \left( \langle v_{\theta_N}, \sum_{i=1}^N \mathbf{1}_{B_{N,i}} \dfrac{w_{ij}^{(N)}}{N}  \mathbf{1}_{z\leq \lambda_{N,j}} \rangle_{\theta_N}\right)^2\pi_{j}(ds,dz)\notag\\
&= \dfrac{1}{A^2} \sum_{j=1}^N\int_{t_0}^t \int_0^\infty  \left( \sum_{i=1}^N \dfrac{w_{ij}^{(N)}}{N}  \langle v_{\theta_N}, \mathbf{1}_{B_{N,i}} \rangle_{\theta_N}\right)^2  \mathbf{1}_{z\leq \lambda_{N,j}} \pi_{j}(ds,dz).
\end{align}
Let us focus on the term $ E_N:= \sum_{i=1}^N  \dfrac{w_{ij}^{(N)}}{N}  \langle v_{\theta_N}, \mathbf{1}_{B_{N,i}} \rangle_{\theta_N}$. We have with trigonometric formula 
$$E_N=\dfrac{2\pi}{N} \left(\cos(x_j+\theta_N)\left( \sum_{i=1}^N \cos(x_i+\theta_N) \langle v_{\theta_N}, \mathbf{1}_{B_{N,i}} \rangle_{\theta_N} \right)+\sin(x_j+\theta_N)\left(\sum_{i=1}^N \sin(x_i+\theta_N) \langle v_{\theta_N}, \mathbf{1}_{B_{N,i}} \rangle_{\theta_N} \right) \right).$$
As $\displaystyle\sum_{i=1}^N \cos(x_i+\theta_N) \langle v_{\theta_N}, \mathbf{1}_{B_{N,i}} \rangle_{\theta_N}\xrightarrow[N\to\infty]{}\int_S A \cos \sin f'(A\cos) =0$ (by symmetry) and  $\displaystyle\sum_{i=1}^N \sin(x_i+\theta_N) \langle v_{\theta_N}, \mathbf{1}_{B_{N,i}} \rangle_{\theta_N}\xrightarrow[N\to\infty]{}-\int_S A \sin^2 f'(A\cos) =-A$ with \eqref{eq:A_kappa&Isin}, we have that 
$$\sum_{i=1}^N \dfrac{w_{ij}^{(N)}}{N}  \langle v_{\theta_N}, \mathbf{1}_{B_{N,i}} \rangle_{\theta_N} \sim_{N\to\infty} - \dfrac{2\pi }{N} A \sin(x_j+\theta_N).$$
Hence we have $ \left( \sum_{i=1}^N  \dfrac{w_{ij}^{(N)}}{N}  \langle v_{\theta_N}, \mathbf{1}_{B_{N,i}} \rangle_{\theta_N}\right)^2= \dfrac{A^2}{N^2}\Phi(x_j,\theta_N)$ with $\Phi(x_j,\theta_N)\sim_{N\to\infty}\left(2\pi \sin(x_j+\theta_N)\right)^2$ (bounded independently of $N$, $\theta_N$).
Coming back to \eqref{eq:Qt_aux1}, we have
$$[Q]_t = \dfrac{1}{N^2}\sum_{j=1}^N\int_{t_0}^t \int_0^\infty  \Phi(x_j,\theta_N)  \mathbf{1}_{z\leq \lambda_{N,j}} \pi_{j}(ds,dz)+o\left(\dfrac{1}{N}\right).$$

Let 
\begin{align*}
Q_1(t)&:=\dfrac{1}{N^2}\sum_{j=1}^N\int_{t_0}^t \int_0^\infty  \Phi(x_j,\theta_N)  \left(\mathbf{1}_{z\leq f(U_{N,j}(s-))} -\mathbf{1}_{z\leq f(u_{\theta_N}(x_j))}\right) \pi_{j}(ds,dz)\\
Q_2(t)&:=\dfrac{1}{N^2}\sum_{j=1}^N\int_{t_0}^t \int_0^\infty  \Phi(x_j,\theta_N)  \mathbf{1}_{z\leq f(u_{\theta_N}(x_j))} \tilde{\pi}_{j}(ds,dz)\\
Q_3(t)&:=\dfrac{1}{N^2}\sum_{j=1}^N\int_{t_0}^t \int_0^\infty  \Phi(x_j,\theta_N)  \mathbf{1}_{z\leq f(u_{\theta_N}(x_j))}~dsdz,
\end{align*}
so that $[Q]_t=Q_1(t)+Q_2(t)+Q_3(t)+o\left(\dfrac{1}{N}\right)$. We have (recall that $\Phi$ is bounded)
\begin{align*}
\mathbf{E}\left[ \vert Q_1(t)\vert\right] &\leq \dfrac{1}{N^2} \sum_{j=1}^N \mathbf{E} \left[ \int_{t_0}^t \int_0^\infty  \Phi(x_j,\theta_N)  \left\vert\mathbf{1}_{z\leq f(U_{N,j}(s-))} -\mathbf{1}_{z\leq f(u_{\theta_N}(x_j))}\right\vert \pi_{j}(ds,dz)\right]\\
&= \dfrac{\Vert \Phi \Vert_\infty}{N^2} \sum_{j=1}^N  \int_{t_0}^t \mathbf{E} \left[  \left\vert f(U_{N,j}(s-)) -f(u_{\theta_N}(x_j))\right\vert \right] ds\\
&\leq \dfrac{\Vert \Phi \Vert_\infty \Vert f \Vert_{\text{lip}}}{N} (t-t_0) C N^{\eta-1/2} \leq  C \dfrac{T_f(N)}{N} N^{\eta-1/2} \xrightarrow[N\to\infty]{}0,
\end{align*}
using \eqref{eq:prox_UN_utheta}. About $Q_2$, we use once again that $Q_2$ is a real martingale with quadratic variation
\begin{align*}
[Q_2]_t&= \sum_{j=1}^N\int_{t_0}^t \int_0^\infty  \left(\dfrac{1}{N^2}\Phi(x_j,\theta_N)  \mathbf{1}_{z\leq f(u_{\theta_N}(x_j))}\right)^2 \pi_{j}(ds,dz)\\
&\leq \dfrac{C}{N^4}  \sum_{j=1}^N \int_{t_0}^t \int_0 ^\infty \mathbf{1}_{z\leq f(u_{\theta_N}(x_j))}\pi_{j}(ds,dz),
\end{align*}
hence with Burkholder-Davis-Gundy inequality,
$$\mathbf{E}\left[Q_2(t)^2\right]\leq C \mathbf{E}\left[ \left[Q_2\right]_t \right]\leq  \dfrac{C}{N^4}  \mathbf{E}\left[\sum_{j=1}^N \int_{t_0}^t \int_0^\infty  \mathbf{1}_{z\leq f(u_{\theta_N}(x_j))} \pi_j(ds,dz)\right]\leq \dfrac{C}{N^2}  \dfrac{T_f(N)}{N} \xrightarrow[N\to\infty]{}0.$$ 

The last term $\displaystyle Q_3(t)=\dfrac{1}{N^2}\sum_{j=1}^N\int_{t_0}^t \Phi(x_j,\theta_N) f(u_{\theta_N}(x_j))ds$ gives the term $\widetilde{\Theta}_N(t_0,t)$ in \eqref{eq:ctrl_I3}.

\subsubsection{Proof of Lemma \ref{lem:I4}}

Recall  that $I_4(t)$ is defined in \eqref{eq:ito_theta}. A Taylor's expansion gives that
\begin{align*}
I_4(t) &= \sum_{j=1}^N\int_{t_0}^t\int_0^\infty  \int_0^1 (1-r)D^2\theta \left( U_N(s-) + r\chi_j(s,z) \right) \left[\chi_j(s,z)\right]^2 dr~  \pi_j(ds,dz)\\
&=  \sum_{j=1}^N\int_{t_0}^t\int_0^\infty  \int_0^1 (1-r)D^2\theta \left( U_N(s-) + r\chi_j(s,z) \right) \left[\chi_j(s,z)\right]^2 dr~  \widetilde{\pi}_j(ds,dz)
\\& \quad  +\sum_{j=1}^N\int_{t_0}^t\int_0^\infty  \int_0^1 (1-r)\left( D^2\theta \left( U_N(s-) + r\chi_j(s,z) \right) -D^2 \theta\left( U_N(s-)\right) \right)\left[\chi_j(s,z)\right]^2 drdsdz
\\& \quad  +\sum_{j=1}^N\int_{t_0}^t\int_0^\infty  \int_0^1 (1-r)\left( D^2\theta \left( U_N(s-) \right) -D^2 \theta\left(u_{\theta_N}\right) \right)\left[\chi_j(s,z)\right]^2 drdsdz
\\& \quad  +\sum_{j=1}^N\int_{t_0}^t\int_0^\infty  \int_0^1 (1-r)D^2 \theta\left(u_{\theta_N}\right)\left[\chi_j(s,z)\right]^2 dr  dsdz =: L_1(t) + L_2(t) + L_3(t)+L_4(t).
\end{align*}

$L_1$ is a real martingale and 
\begin{align*}
[L_1](t)&=  \sum_{j=1}^N\int_{t_0}^t\int_0^\infty \left( \int_0^1 (1-r)D^2\theta \left( U_N(s-) + r\chi_j(s,z) \right) \left[\chi_j(s,z)\right]^2 dr\right)^2  \pi_j(ds,dz)\\
&\leq \dfrac{\Vert D^2 \theta \Vert_\infty^2}{2} \sum_{j=1}^N\int_{t_0}^t\int_0^\infty \Vert \chi_j(s,z) \Vert_2^4 \pi_j(ds,dz)\leq \dfrac{C}{N^4 } \sum_{j=1}^N\int_{t_0}^t\int_0^\infty \mathbf{1}_{z\leq \lambda_{N,j}(s)}   \pi_j(ds,dz).
\end{align*}
As done for $Q_2$ in the proof of Lemma \ref{lem:I3}, we obtain that
\begin{equation}\label{eq:I4_aux}
\mathbf{E}\left[\vert L_1(t)^2 \vert\right]\leq \dfrac{C}{N^2} \dfrac{T_f(N)}{N}\xrightarrow[N\to\infty]{}0.
\end{equation}
We have, using \eqref{eq:norme_chi} and the fact that $f$ is bounded
\begin{align*}
L_2(t)&=\sum_{j=1}^N\int_{t_0}^t\int_0^\infty  \int_0^1 (1-r)\left( D^2\theta \left( U_N(s-) + r\chi_j(s,z) \right) -D^2 \theta\left( U_N(s-)\right) \right)\left[\chi_j(s,z)\right]^2 drdsdz\\
&\leq \sum_{j=1}^N\int_{t_0}^t\int_0^\infty \Vert D^2\theta \Vert_{lip} \Vert  \chi_j(s,z) \Vert_{2} \Vert \chi_j(s,z)\Vert_{2}^2 ~ dsdz\\
&\leq C \sum_{j=1}^N \int_{t_0}^t\int_0^\infty \left( \dfrac{1}{N}  \mathbf{1}_{z\leq \lambda_{N,j}(s)} \right)^3 dsdz \leq \dfrac{C}{N^3}  \sum_{j=1}^N \int_{t_0}^t \lambda_{N,j}(s) ds \leq \dfrac{C T_f(N)}{N^2}.
\end{align*}

Similarly, using \eqref{eq:prox_UN_utheta}
\begin{align*}
L_3(t)&= \sum_{j=1}^N\int_{t_0}^t\int_0^\infty  \int_0^1 (1-r)\left( D^2\theta \left( U_N(s-) \right) -D^2 \theta\left(u_{\theta_N}\right) \right)\left[\chi_j(s,z)\right]^2 drdsdz\\
&\leq  \dfrac{1}{2} \Vert D^2 \theta\Vert_{\text{lip}} \sum_{j=1}^N \int_{t_0}^t\int_0^\infty  \Vert U_N(s-)-u_{\theta_N}\Vert_{2} \Vert \chi_j(s,z)\Vert_{2}^2 dsdz\\
&\leq C N^{\eta-1/2} \sum_{j=1}^N \int_{t_0}^t\int_0^\infty  \dfrac{1}{N^2}\mathbf{1}_{z\leq \lambda_{N,j}(s)} dsdz \leq C \dfrac{T_f(N)}{N}  N^{\eta-1/2}.
\end{align*}

For $L_4$, we use the computation of $D^2\theta \left( u_{\theta_N}\right) \left[\chi_j(s,z)\right]^2$ given by Lemma \ref{lem:computations_Dtheta2_chi}: for some $C=C_{A,\gamma}$,
\begin{align*}
L_4(t)&= \frac{1}{2}\sum_{j=1}^N\int_{t_0}^t\int_0^\infty  D^2 \theta\left(u_{\theta_N}\right)\left[\chi_j(s,z)\right]^2 dsdz\\
&= \sum_{j=1}^N\int_{t_0}^t\int_0^\infty  \mathbf{1}_{z\leq \lambda_{N,j}(s)}   \left( \dfrac{C}{N^2}\cos(x_j+\theta)\sin(x_j+\theta)+ O(N^{-3}) \right) dsdz\\
&= \dfrac{C}{N^2}\sum_{j=1}^N\int_{t_0}^t\ \lambda_{N,j}(s)  \cos(x_j+\theta_N)\sin(x_j+\theta_N) ds +   O\left( \dfrac{T_f(N)}{N^2} \right)\\
&=\dfrac{C}{N^2}\sum_{j=1}^N\int_{t_0}^t \left( f(U_N(s-)(x_j) - f(u_{\theta_N}(x_j))\right)\cos(x_j+\theta_N)\sin(x_j+\theta_N) ds +   O\left( \dfrac{T_f(N)}{N^2}  \right)\\& \quad  + \dfrac{C}{N^2}\sum_{j=1}^N\int_{t_0}^t f(u_{\theta_N})\cos(x_j+\theta_N)\sin(x_j+\theta_N) ds.
\end{align*}
As done before for $D_N$ in \eqref{eq:aux_DN}, $\displaystyle\dfrac{2\pi}{N} \sum_{j=1}^N \left( f(U_N(s-)(x_j) - f(u_{\theta_N}(x_j))\right)\cos(x_j+\theta_N)\sin(x_j+\theta_N) = O(N^{\eta-1/2})$ and 
\begin{align*}
\dfrac{C}{N^2}\sum_{j=1}^N f(u_{\theta_N}(x_j))\cos(x_j+\theta_N)\sin(x_j+\theta_N)  &= \dfrac{C}{N} \left( \int_S f(u_{\theta_N}(x))\cos(x+\theta_N)\sin(x+\theta_N)dx + O(N^{-1}) \right)\\
&= O\left(\dfrac{1}{N^2} \right),
\end{align*}
hence as $T_f(N)\propto N$,  $\displaystyle L_4(t) =  O\left( N^{\eta-1/2} \right)$. Combining our results on $L_2, L_3, L_4$, we have then shown that  $\sup_{t\in [T_0(N),T_f(N)]} \left(L_2(t)+L_3(t)+L_4(t)\right) = O\left( N^{\eta-1/2}\right) \xrightarrow[N\to\infty]{}0$. We conclude with \eqref{eq:I4_aux}.

\appendix
\section{Appendix: on the stationary solutions to the Neural Field Equation}
\label{S:appendix_stat}

\subsection{When $f$ is the Heaviside function}\label{S:stat_H}

Here we study  the NFE equation \eqref{eq:NFE_gen} and its stationary solutions \eqref{eq:NFE_stat_gen} when $f=H_\varrho$. We recall the results from of \cite{kilpatrick2013} and \cite{veltzFaugeras2010}.

\begin{prop}\label{prop:stat_sol_H} There exist non-zero stationary solutions to \eqref{eq:NFE_gen} when $f=H_\varrho$, $\nu(dy)= \frac{\mathbf{1}_{[-\pi,\pi)}}{2\pi}dy$ and $w(x,y)=2\pi cos(x-y)$ if and only if $\varrho\in [-1,1]$, and in this case, the set of stationary solutions is  $\mathcal{U}_0~ \cup~  \mathcal{U}_{A_+(0)} ~ \cup~  \mathcal{U}_{A_-(0)}$, where $A_+(0)$ and $A_-(0)$ are defined in \eqref{eq:def_A0}.
\end{prop}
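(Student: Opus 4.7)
The plan is to reduce the problem, via Remark \ref{rem:invariance}, to finding all non-negative fixed points of the scalar equation \eqref{eq:A_self} with $f=H_{\varrho}$, then compute those fixed points explicitly.

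First I would show that any solution $u$ of \eqref{eq:NFE_stat_gen} has the form $u(x)=A\cos(x+\phi)$ with $A\geq 0$ and $\phi \in S$. Indeed, expanding $\cos(x-y)=\cos(x)\cos(y)+\sin(x)\sin(y)$ in \eqref{eq:NFE_stat_gen} shows that $u(x)=B\cos(x)+C\sin(x)$ with $B=\int_S \cos(y) f(u(y))dy$ and $C=\int_S \sin(y) f(u(y))dy$, and one can write this as $A\cos(x+\phi)$ with $A=\sqrt{B^2+C^2}$. Using translation invariance, we may assume $\phi=0$, so $u=A\cos$ and \eqref{eq:A_self} must hold. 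Clearly $A=0$ is a solution, giving $\mathcal{U}_0$; it remains to identify the positive solutions.

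Next I would solve \eqref{eq:A_self} for $A>0$ with $f=H_{\varrho}$. Since $H_{\varrho}(A\cos(y))=\mathbf{1}_{\cos(y)\geq \varrho/A}$, a non-trivial solution requires $|\varrho/A|\leq 1$; otherwise the integral is either $0$ (contradicting $A>0$) or $\int_S \cos(y)\,dy=0$. Setting $y_{0}=\arccos(\varrho/A)\in[0,\pi]$, a direct computation gives
\begin{equation*}
A\,=\,\int_{-y_{0}}^{y_{0}}\cos(y)\,dy\,=\,2\sin(y_{0})\,=\,2\sqrt{1-\varrho^{2}/A^{2}},
\end{equation*}
which yields the biquadratic equation $A^{4}-4A^{2}+4\varrho^{2}=0$, and hence $A^{2}=2\pm 2\sqrt{1-\varrho^{2}}$. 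These roots are real if and only if $\varrho \in [-1,1]$, and they coincide with $A_{\pm}(0)^{2}=(\sqrt{1+\varrho}\pm\sqrt{1-\varrho})^{2}=2\pm 2\sqrt{1-\varrho^{2}}$, recovering the values in \eqref{eq:def_A0}.

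Finally I would check the consistency condition $A\geq |\varrho|$ (required for $y_{0}$ to be well-defined), which holds for both $A_{+}(0)$ and $A_{-}(0)$ when $\varrho\in[-1,1]$ by a direct estimate: writing $A_{-}(0)=2|\varrho|/(\sqrt{1+\varrho}+\sqrt{1-\varrho})$, one sees that $A_{-}(0)\geq |\varrho|$ is equivalent to $\sqrt{1+\varrho}+\sqrt{1-\varrho}\leq 2$, which always holds since its square equals $2+2\sqrt{1-\varrho^{2}}\leq 4$. Combining the three cases $A\in\{0, A_{-}(0), A_{+}(0)\}$ and applying the rotation invariance of \eqref{eq:NFE_stat_gen} to produce the full orbits, we obtain exactly $\mathcal{U}_{0}\cup \mathcal{U}_{A_{+}(0)}\cup \mathcal{U}_{A_{-}(0)}$ as claimed. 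No real obstacle is expected; the main care point is handling the sign of $\varrho/A$ in the Heaviside expression and observing that the set $\{A\cos(y)=\varrho\}$ has Lebesgue measure zero, so it does not contribute to the integral.
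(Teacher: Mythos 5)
Your proposal follows the same route as the paper's proof: reduce to the scalar fixed-point equation \eqref{eq:A_self} via the cosine expansion (Remark \ref{rem:invariance}), observe that for $A>0$ one needs $\vert\varrho\vert\leq A$ so that $H_\varrho(A\cos(\cdot))$ is the indicator of $\vert y\vert\leq\arccos(\varrho/A)$, and evaluate the integral to obtain $A=2\sqrt{1-\varrho^2/A^2}$, whose real non-negative roots are exactly $A_\pm(0)$ for $\varrho\in[-1,1]$. The only cosmetic difference is that the paper imposes $A\geq\vert\varrho\vert$ as a necessary condition up front while you derive it and re-verify it after squaring, and in the line $A_-(0)=2\varrho/(\sqrt{1+\varrho}+\sqrt{1-\varrho})$ you should drop the absolute value in the numerator (so that $A_-(0)$ is negative for $\varrho<0$ and one invokes $\mathcal{U}_A=\mathcal{U}_{-A}$, as the paper does at the start); apart from this sign bookkeeping the argument is correct and matches the paper's.
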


\begin{proof}
(following \cite{kilpatrick2013}) First, $u=0$ is an evident solution to \eqref{eq:NFE_stat_gen}. We focus now on the other solutions. To solve \eqref{eq:NFE_stat_gen}, we need to find $A$ solving \eqref{eq:A_self}.  As $A\cos(x)=-A\cos(x+\pi)$, $\mathcal{U}_A=\mathcal{U}_{-A}$ and we can focus on the case $A>0$. 

Let $A>0$ be a solution to \eqref{eq:A_self} with $f=H_\varrho$. Note that we necessarily need $A\geq \vert \varrho \vert$, because if $A<\varrho$, the threshold $\varrho$ is never reached in \eqref{eq:A_self} hence the unique solution is $A=0$ which is a contradiction (and similarly for $\varrho<-A$). Then as $\vert \varrho \vert \leq A$, $\text{Arccos} (\varrho/A)\in [0,\pi]$ is well defined and verifies $A\cos(y)\geq \varrho \Leftrightarrow \vert y \vert \leq \text{Arccos} (\varrho/A)$, hence \eqref{eq:A_self} becomes
\begin{equation}\label{eq:resolution_A_theta}
A=2 \int_0^{\text{Arccos}(\varrho/A)} \cos(y)dy = 2 \sin \left( \text{Arccos}(\varrho/A)\right)=2\sqrt{ 1-\left(\dfrac{\varrho}{A}\right)^2}.
\end{equation}
Equation \eqref{eq:resolution_A_theta} has two non-negative solutions $A_+(0)$ and $A_-(0)$ defined in \eqref{eq:def_A0} if and only if $\varrho \in [-1,1]$, which indeed verify $\varrho\in [-A,A]$, hence the result.
\end{proof}

\subsection{When $f$ is a sigmoid}
\label{S:stat_sol_sig}

Here we prove Proposition \ref{prop:stat_sol_sig}, following the previous result when $f=H_\varrho$ and using the fact that $f_{\kappa,\varrho}\xrightarrow[\kappa\to 0]{}H_\varrho$.

\begin{proof}[Proof of Proposition \ref{prop:stat_sol_sig}]
Define the function $g:\mathbb{R}\times (\vert\varrho\vert,+\infty) \to \mathbb{R}$ such that
\begin{equation}\label{eq:def_g(A,kappa)}
\left \{
\begin{array}{ll}
g(\kappa,a):=a-\int_{-\pi}^\pi \cos(y) f_{\kappa,\varrho}\left( a\cos(y)\right)dy, &\quad (\kappa,a)\in  \mathbb{R}_+^*\times(\vert\varrho\vert,+\infty),\\
g(\kappa,a):=a-\int_{-\pi}^\pi \cos(y) H_{\varrho}\left( a\cos(y)\right)dy=a-2\sqrt{1-\left(\dfrac{\varrho}{a}\right)^2},  &\quad (\kappa,a)\in \mathbb{R}_-\times (\vert\varrho\vert,+\infty).
\end{array}
\right.
\end{equation}
As $f_{\kappa,\varrho}\xrightarrow[\kappa\to 0]{} H_\varrho$, by dominated convergence, $g$ is continuous on  $\mathbb{R}\times(\varrho,+\infty)$. It is differentiable on $ \mathbb{R}_+^*\times(\varrho,+\infty)$ and on $ \mathbb{R}_-^*\times(\varrho,+\infty)$, we now focus on its differentiability in $(0,a)$ for any $a\in(\varrho,+\infty)$.  We first show the continuity of $\dfrac{dg}{da}$, that is showing
\begin{equation}\label{eq:continuity-d_Ag}
\lim_{\kappa\to 0} \dfrac{dg}{da}(\kappa,a)=\dfrac{dg}{da}(0,a)=1-\dfrac{2\varrho^2}{a^3 \sqrt{1-\left(\dfrac{\varrho}{a}\right)^2}}.
\end{equation}
For any $\kappa>0$, recalling the definition of $f_{\kappa,\varrho}$ in \eqref{eq:def_sigmoid},
$$\dfrac{dg}{da}(\kappa,a) = 1 -  \int_{-\pi}^\pi \dfrac{\cos(y)^2 e^{-(a\cos(y)-\varrho)/\kappa}}{\kappa \left(1+e^{-(a\cos(y)-\varrho)/\kappa}\right)^2}dy=1 - 2 \int_{0}^\pi \dfrac{\cos(y)^2 e^{-(a\cos(y)-\varrho)/\kappa}}{\kappa \left(1+e^{-(a\cos(y)-\varrho)/\kappa}\right)^2}dy,$$
and by the change of variables $a\cos(y)-\varrho=u$, we get
\begin{align*}
\int_{0}^\pi \dfrac{\cos(y)^2 e^{-(a\cos(y)-\varrho)/\kappa}}{\kappa \left(1+e^{-(a\cos(y)-\varrho)/\kappa}\right)^2}dy&=\int_{-a-\varrho}^{a-\varrho}\dfrac{(u+\varrho)^2 }{a^3 \sqrt{1-\left(\frac{u+\varrho}{a}\right)^2}} \dfrac{e^{-u/\kappa}}{\kappa \left( 1 + e^{-u/\kappa}\right)^2 }du\\
&= \dfrac{1}{a^3} \int_\mathbb{R} h(-u) \varphi_\kappa(u) du = \dfrac{1}{a^3} (h \ast \varphi_\kappa)(0)
\end{align*}
with $h(u):=\mathbf{1}_{(\varrho-a,a+\varrho)}(u) \dfrac{(-u+\varrho)^2}{ \sqrt{1-\left(\frac{-u+\varrho}{a}\right)^2}}$ and $\varphi_\kappa(u):=\dfrac{e^{-u/\kappa}}{\kappa \left( 1 + e^{-u/\kappa}\right)^2}$. By Lemma \ref{lem:approximate_id}, $ \left(h \ast \varphi_\kappa\right)(0)\xrightarrow[\kappa\to 0]{}h(0)=\dfrac{\varrho^2}{\sqrt{1-\left(\dfrac{\varrho}{a}\right)^2}}$ and \eqref{eq:continuity-d_Ag} follows.
We show now the continuity of $\dfrac{dg}{d\kappa}$, that is 
\begin{equation}\label{eq:continuity-dAkappa}
\lim_{\kappa\to 0} \dfrac{dg}{d\kappa}(\kappa,a)=0.
\end{equation}
For any $\kappa>0$, we obtain similarly
$$\dfrac{dg}{d\kappa}\left(\kappa,a\right)=2 \int_0^\pi \dfrac{\cos(y) \left(a\cos(y)-\varrho\right) e^{-(a\cos(y)-\varrho)/\kappa}}{\kappa^2 \left( 1+e^{-(a\cos(y)-\varrho)/\kappa}\right)^2}dy=  \dfrac{2}{a^2\kappa} \int_{(-a-\varrho)/\kappa}^{(a-\varrho)/\kappa} \tilde{h}(\kappa v) \dfrac{e^{-v}}{ \left( 1 + e^{-v}\right)^2}dv$$
with $\tilde{h}(u):=\dfrac{u(u+\varrho)}{\sqrt{1-\left(\frac{u+\varrho}{a}\right)^2}}$. Let $\displaystyle F(\kappa):=\int_0^{(a-\varrho)/\kappa} \tilde{h}(\kappa v) \dfrac{e^{-v}}{ \left( 1 + e^{-v}\right)^2}dv$, by dominated convergence $F(\kappa)\xrightarrow[\kappa\to 0]{} \tilde{h}(0) \int_0^\infty \dfrac{e^{-v}}{ \left( 1 + e^{-v}\right)^2}dv = \dfrac{\tilde{h}(0)}{2}=0$. Setting $F(0):=0$, $F$ is continuous on $[0,\infty)$ and differentiable on $(0,\infty)$ with 
$$F'(\kappa)=-\dfrac{(a-\varrho)}{\kappa^2} \tilde{h}(a-\varrho)\dfrac{e^{-(a-\varrho)/\kappa}}{ \left( 1 + e^{-(a-\varrho)/\kappa}\right)^2} + \int_0^{(a-\varrho)/\kappa} v\tilde{h}'(\kappa v) \dfrac{e^{-v}}{ \left( 1 + e^{-v}\right)^2}dv.$$
By dominated convergence, $F'(\kappa)\xrightarrow[\kappa\to 0]{} 0+\tilde{h}'(0)\int_0^\infty \dfrac{ve^{-v}}{ \left( 1 + e^{-v}\right)^2}dv=\tilde{h}'(0)\ln(2)=\dfrac{\varrho\ln(2)}{\sqrt{1+\left(\frac{\varrho}{a}\right)^2}}$. Hence by Taylor's theorem, $F(\kappa) = \kappa \dfrac{\varrho\ln(2)}{\sqrt{1+\left(\frac{\varrho}{a}\right)^2}}+o(\kappa)$ as $\kappa\to 0$. Similarly, let 
$$G(\kappa):=\int_{(-a-\varrho)/\kappa}^0 \tilde{h}(\kappa v) \dfrac{e^{-v}}{ \left( 1 + e^{-v}\right)^2}dv=\int_0^{(a+\varrho)/\kappa} \tilde{h}(-v\kappa) \dfrac{e^{v}}{ \left( 1 + e^{v}\right)^2}dv,$$
we also have $G(\kappa)\to 0$. Setting $G(0):=0$, $G$ is differentiable on $(0,\infty)$ with
$$G'(\kappa)=-\dfrac{a+\varrho}{\kappa^2}\tilde{h}(-a-\varrho)\dfrac{e^{(a+\varrho)/\kappa}}{ \left( 1 + e^{(a+\varrho)/\kappa}\right)^2} -\int_0^{(a+\varrho)/\kappa} v\tilde{h}'(\kappa v)\dfrac{e^{v}}{ \left( 1 + e^{v}\right)^2}dv \xrightarrow[\kappa\to 0]{} -\tilde{h}'(0) \ln(2).$$ Hence by Taylor's theorem $G(\kappa)=-\kappa \dfrac{\varrho\ln(2)}{\sqrt{1+\left(\frac{\varrho}{a}\right)^2}} + o(\kappa)$ as $\kappa\to 0$. We obtain then
$$\dfrac{dg}{d\kappa}\left(\kappa,a\right)= \dfrac{2}{a^2\kappa} \left( F(\kappa) + G(\kappa)\right) =  \dfrac{2}{a^2\kappa} \left( \kappa \tilde{h}'(0)\ln(2) - \kappa \tilde{h}'(0)\ln(2) + o(\kappa)\right)=o(1),$$ hence \eqref{eq:continuity-dAkappa} is true. We have shown that $g$ is indeed $\mathcal{C}^1$ on $\mathbb{R}\times (\vert\varrho\vert,+\infty)$. 

Our aim is to apply the implicit function theorem. With Proposition \ref{prop:stat_sol_H}, we have that $g(0,A_+(0))=0$. Let us show that $\dfrac{dg}{da}(0,A_+(0)) \neq 0$. Using \eqref{eq:def_A0}, we obtain
$$\dfrac{dg}{da}(0,A_+(0))= 1-\dfrac{2\varrho^2}{2\left(1 + \sqrt{1-\varrho^2} \right)\sqrt{2 + 2\sqrt{1-\varrho^2} -\varrho^2}},$$
we then need $\varrho^2\neq  \left(1+ \sqrt{1-\varrho^2} \right)\sqrt{2 + 2\sqrt{1-\varrho^2} -\varrho^2}$, which is true if and only if $\varrho \neq 1$. We conclude by implicit function theorem.

It remains now to prove  that there exists  $\kappa_1>0$ such that for any $\kappa \in (0,\kappa_1)$, $I(1,\kappa)=\int_S f_{\kappa,\varrho}'(A(\kappa)\cos(x))dx\in (1,2)$. We have \begin{align*}
\mathcal{I}(1,\kappa)&=2 \int_0^\pi  \dfrac{e^{-(A(\kappa)\cos(y)-\varrho)/\kappa}}{\kappa\left(1+e^{-(A(\kappa)\cos(y)-\varrho)/\kappa}\right)^2}dy\\ 
&=2\int_{-A(\kappa)-\varrho}^{A(\kappa)-\varrho} \dfrac{1}{A(\kappa)\sqrt{1-\left(\frac{u+\varrho}{A(\kappa)}\right)^2}} \dfrac{ e^{- u/\kappa}}{\kappa(1+e^{- u/\kappa})^2}= h\ast \phi_{\kappa}(0)\xrightarrow[\kappa\to 0]{} \dfrac{2}{A_+(0)\sqrt{1-\frac{\varrho^2}{A_+(0)^2}}},
\end{align*}
with $h(u)=\mathbf{1}_{(\varrho-A(\kappa), A(\kappa)+\varrho)}(u)\dfrac{2}{A(\kappa)\sqrt{1-\left(\frac{-u+\varrho}{A(\kappa)}\right)^2}}$ and using Lemma \ref{lem:approximate_id} and as $A(\kappa)\xrightarrow[\kappa\to 0]{} A_+(0)$ defined in \eqref{eq:def_A0}. As
$$\dfrac{1}{A_+(0)\sqrt{1-\frac{\varrho^2}{A_+(0)^2}}}=\dfrac{1}{\sqrt{2 + 2\sqrt{1-\varrho^2}-\varrho^2}}<1$$
when $\varrho\in (-1,1)$, by continuity of $\kappa\mapsto A(\kappa)$ there exists $\kappa_1>0$ such that for $\kappa<\kappa_1$, we have indeed $I(1,\kappa)<2$. Let us show know that for small $\kappa$ we have also $I(1,\kappa)>1$. We have
$$I(1,0)-1=\dfrac{2}{\sqrt{2+2\sqrt{1-\varrho^2}-\varrho^2}}-1=\dfrac{2-\sqrt{2+2\sqrt{1-\varrho^2}-\varrho^2}}{\sqrt{2+2\sqrt{1-\varrho^2}-\varrho^2}},$$
and as $2\sqrt{1-\varrho^2}-\varrho^2<2$ we have indeed $I(1,0)-1>0$. Similarly by continuity it implies that $I(1,\kappa)>1$ for $\kappa$ small enough.
\end{proof}

\section{Appendix: Some computations}
\subsection{Control of the noise perturbation}
\label{S:noise}

We prove here Proposition \ref{prop:control_noise}, which is a part of the \textbf{Step 2} of the proof of Theorem \ref{thm:UN_close_U} in Section \ref{S:long_time}. The proof relies on a adaptation of an argument given in \cite{Zhu2017} (Theorem 4.3), where a similar quantity to the following \eqref{eq:zeta_n_chi} is considered for $N=1$, and used in the proof of Proposition 4.2 of \cite{agathenerine_longtime_arxiv}. 

\begin{proof}[Proof of Proposition \ref{prop:control_noise}]  Recall the expression of $\left(Z_{N,j}\right)_{1\leq j \leq N}$ in \eqref{eq:def_ZiN}. Introduce the compensated measure $\tilde{\pi}_j(ds,dz):=\pi_j(ds,dz)-\lambda_{N,j}dsdz$, so that with the linearity of $(e^{t\mathcal{L}_{\phi_ {n-1}}})_{t\geq 0}$, we obtain that $\zeta_n$ can be written as
\begin{equation}\label{eq:zeta_n_chi}
 \zeta_n(t) = \sum_{j=1}^N \int_0^t\int_0^\infty e^{(t-s)\mathcal{L}_{\phi_{n-1}}}\chi_j(s,z) \tilde{\pi}_j(ds,dz),
\end{equation}
with 
\begin{equation}\label{eq:def_chi}
\chi_j(s,z):=\left( \sum_{i=1}^N \mathbf{1}_{B_{N,i}} \dfrac{w_{ij}^{(N)}}{N} \mathbf{1}_{z\leq \lambda_{N,j}(s)}\right).
\end{equation}
Fix $m\geq 1$. The functional $\phi:L^2(I)\to \mathbb{R}$ given by $\phi(v)=\Vert v \Vert_2^{2m}$ is of class $\mathcal{C}^2$ (recall that $\zeta_n(t) \in L^2(I)$) so that by Itô formula on the expression \eqref{eq:zeta_n_chi} we obtain
\begin{align}\label{eq:zeta_spatial_def}
\phi\left(\zeta_n(t)\right)&= \int_0^t \phi'\left(\zeta_n(s)\right) \mathcal{L}_{\phi_{n-1}}\left(\zeta_n(s)\right)ds + \sum_{j=1}^N \int_0^t \int_0^\infty \phi'\left(\zeta_n(s-)\right)\chi_j(s,z)\tilde{\pi}_j(ds,dz) \notag\\+ &\sum_{j=1}^N\int_0^t\int_0^\infty \left[ \phi\left( \zeta_n(s-)+\chi_j(s,z)\right) - \phi\left(\zeta_n(s-)\right) - \phi'\left(\zeta_n(s-)\right)\chi_j(s,z)\right]\pi_j(ds,dz)\notag\\
&:= I_0(t) + I_1(t) + I_2(t).
\end{align}
We also have that for any $v,h,k\in L^2(I)$, $\phi'(v)h=2m\Vert v \Vert_2^{2m-2}\left(\langle v,h\rangle\right)\in\mathbb{R}$ and $\phi''(v)(h,k)=2m(2m-1)\Vert v \Vert_2^{2m-4}\langle v,k \rangle \langle v,h \rangle +2m\Vert v \Vert^{2m-2} \langle h,k\rangle$.

We have $I_0(t)= \int_0^t 2m \Vert \zeta_N(s) \Vert_2^{2m-2}\left( \langle \zeta_N(s),\mathcal{L}(\zeta_N(s))\rangle \right)ds$.  From Proposition \ref{prop:spect_L_phi}, $\mathcal{L}_{\phi_{n-1}}$ has only three non-positive eigenvalues hence by Lumer-Philipps Theorem (see Section 1.4 of \cite{Pazy1974}), $\left( \langle \zeta_n(s),\mathcal{L}_{\phi_{n-1}}(\zeta_Nns))\right)\rangle\leq 0$. Then for any $t\geq 0$, $I_0(t)\leq 0$. 

Let some $\varepsilon>0$ to be chosen later. About $I_1$, using Burkholder-Davis-Gundy inequality, the independence of the family $\left(\pi_j\right)$ and  H{\"o}lder inequality with some well chosen parameter, one can show
\begin{multline}\label{eq:spatial_I1}
\mathbf{E}\left[ \sup_{s\leq T} \left|I_1(s)\right|\right] \leq C (2m-1) \varepsilon \mathbf{E}\left[ \sup_{0\leq s \leq T} \left( \Vert \zeta_n(s)\Vert_2^{2m}\right) \right]\\ + C \varepsilon^{-(2m-1)}\mathbf{E}\left[\left(\sum_{j=1}^N\int_0^T\int_0^\infty \Vert \chi_j(s,z)\Vert_2^2 \pi_j(ds,dz)\right)^m\right],
\end{multline}
as done  for Proposition 4.2 of \cite{agathenerine_longtime_arxiv}, with $C$ some deterministic constant. About $I_2$, using Taylor's Lagrange formula,  H{\"o}lder and Young's inequalities, one can show 
\begin{multline}\label{eq:spatial_I2}
\mathbf{E}\left[ \sup_{s\leq T} \left|I_2(s)\right|\right]  \leq m(2m-2) \varepsilon \mathbf{E}\left[\sup_{0\leq s \leq t} \left( \Vert \zeta_n(s)\Vert_2^{2m}\right)\right] \\+ 2m \varepsilon^{-(2m-2)}\mathbf{E}\left[\left(\sum_{j=1}^N\int_0^t\int_0^\infty \Vert \chi_j(s,z)\Vert_2^2 \pi_j(ds,dz)\right)^m\right].
\end{multline}

Taking the expectation in \eqref{eq:zeta_spatial_def} and fixing $\varepsilon$ such that $\varepsilon\left( C(2m-1)+m(2m-2) \right) \leq \frac{1}{2}$ , we get
$$\mathbf{E}\left[\sup_{s\leq T} \Vert \zeta_n(s) \Vert_2^{2m}\right] \leq 2 C\mathbf{E}\left[\left(\sum_{j=1}^N\int_0^T\int_0^\infty \Vert \chi_j(s,z)\Vert_2^2 \pi_j(ds,dz)\right)^m\right],
$$
where $C>0$ depends only on $m$. As $\sup_{i,j}w_{ij}^{(N)}\leq 2\pi$,
\begin{equation}\label{eq:norme_chi}
\Vert\chi_j(s,z)\Vert_2^2 = \dfrac{1}{N} \sum_{i=1}^N\left(\dfrac{w_{ij}^{(N)}}{N}\right)^2  \mathbf{1}_{z\leq \lambda_{N,j}(s)} \leq \dfrac{4\pi^2}{N^2}   \mathbf{1}_{z\leq \lambda_{N,j}(s)}.
\end{equation}
As $f$ is bounded by 1, we have that 
$ \displaystyle \mathbf{E}\left[\sup_{s\leq T} \Vert \zeta_n(s) \Vert_2^{2m}\right] \leq  \dfrac{C}{N^m} \mathbf{E}\left[\dfrac{1}{N}\sum_{j=1}^N \widetilde{Z}_{j}(T)^m\right],$
where $\left(\widetilde{Z}_j(t)\right)$ are i.i.d copies of a Poisson process on intensity 1. Hence for some constant $C=C(T,m,\kappa,\varrho)>0$, for any $1\leq n \leq n_f$,
$\displaystyle\mathbf{E}\left[ \sup_{0\leq t \leq T}  \left\Vert \zeta_{n}(t) \right\Vert_2^{2m} \right]\leq \dfrac{C}{N^m}$.
It implies 
$$\mathbf{P} \left( \sup_{t\in[0,T]} \left\Vert \zeta_{n}(t) \right\Vert_2  \geq \dfrac{N^\eta}{\sqrt{N}}\right) \leq \dfrac{\mathbf{E}\left[ \sup_{0\leq t \leq T}  \left\Vert \zeta_{n}(t) \right\Vert_2^{2m} \right] }{N^{2\eta m}} N^m \leq C N^{-2m\eta},$$
hence by a union bound $\mathbf{P}(A_N^C)\leq C n_f  N^{-2m\eta}=CN^{\alpha-2m\eta}$. We can then choose $m$ large enough to obtain the result of Proposition \ref{prop:control_noise}. 
\end{proof}

\subsection{Analysis complements}

\begin{lem}\label{lem:approximate_id}
Define $\varphi(u)=\dfrac{e^{-u}}{\left(1+e^{-u}\right)^2}$. For any $\kappa>0$, let $\varphi_\kappa(u):=\frac{1}{\kappa} \varphi\left(\frac{u}{\kappa}\right)$. Then $\left(\varphi_\kappa\right)_{\kappa>0}$ is an approximate identity and $\varphi_\kappa \ast h \xrightarrow[\kappa\to 0]{} h$ for any $h\in L^p$, with $1\leq p < \infty$.
\end{lem}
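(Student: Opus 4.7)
The plan is to check that $(\varphi_\kappa)_{\kappa>0}$ is a genuine approximate identity in the classical sense (nonneg\-ative, integral $1$, concentrating at $0$), and then invoke the standard $L^p$ convergence theorem for approximate identities.

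First I would observe that $\varphi$ arises as the derivative of the sigmoid: a direct computation gives
\begin{equation*}
\frac{d}{du}\Big(\frac{1}{1+e^{-u}}\Big)=\frac{e^{-u}}{(1+e^{-u})^2}=\varphi(u).
\end{equation*}
Hence $\varphi\geq 0$ on $\mathbb{R}$, and by the fundamental theorem of calculus together with the limits $\frac{1}{1+e^{-u}}\to 1$ as $u\to+\infty$ and $\to 0$ as $u\to-\infty$, we get $\int_{\mathbb{R}}\varphi(u)\,du=1$. Moreover $\varphi$ is bounded and decays exponentially at $\pm\infty$, so $\varphi\in L^1(\mathbb{R})$.

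Next, the rescaling $\varphi_\kappa(u)=\kappa^{-1}\varphi(u/\kappa)$ preserves $\int_\mathbb{R}\varphi_\kappa=1$ and $\varphi_\kappa\geq 0$, and for every $\delta>0$,
\begin{equation*}
\int_{|u|\geq \delta}\varphi_\kappa(u)\,du=\int_{|v|\geq \delta/\kappa}\varphi(v)\,dv\xrightarrow[\kappa\to 0]{}0
\end{equation*}
by dominated convergence, since $\varphi\in L^1$. This is exactly the definition of an approximate identity.

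Finally, for $h\in L^p(\mathbb{R})$ with $1\leq p<\infty$, I would apply the classical approximation of identity theorem (see e.g.\ Folland, \emph{Real Analysis}, Theorem 8.14): writing
\begin{equation*}
(\varphi_\kappa\ast h)(x)-h(x)=\int_\mathbb{R}\bigl(h(x-y)-h(x)\bigr)\varphi_\kappa(y)\,dy,
\end{equation*}
one splits the integral over $\{|y|<\delta\}$ and $\{|y|\geq\delta\}$, uses Minkowski's integral inequality and continuity of translations in $L^p$ on the first piece, and the concentration property of $\varphi_\kappa$ together with $\|h\|_p<\infty$ on the second. There is no real obstacle here: the only tiny verification is that $\varphi$ itself is integrable, which we have, so the whole argument is an application of a standard theorem.
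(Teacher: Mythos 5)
Your proposal is correct and takes essentially the same route as the paper: the paper's proof consists only of verifying $\int_{\mathbb{R}}\varphi=1$ (via the same antiderivative $u\mapsto(1+e^{-u})^{-1}$) and appealing to the standard theory of approximate identities, which is exactly what you do, just with the routine details of the classical $L^p$ convergence theorem spelled out.
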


\begin{proof}
It suffices to check that 
$$ \int_\mathbb{R}\varphi(u)du= \int_\mathbb{R}\dfrac{e^{-u}}{(1+e^{-u})^2}du= \left[\dfrac{1}{1+e^{-u})}\right]_{-\infty}^{+\infty}=1.$$
\end{proof}

\begin{lem}\label{lem:approx_int}
Let $N\geq 1$, recall that $S=[-\pi,\pi)$ and its regular subdivision $x_{i}=\dfrac{2i\pi}{N}-\pi$ for $0\leq i \leq N$. For any function $g\in\mathcal{C}^2(I,\mathbb{R})$, we have
\begin{equation}\label{eq:approx_int_1}
\dfrac{2\pi}{N} \sum_{j=1}^N g(x_j) = \int_S g(y)dy - \dfrac{1}{2} \left( \dfrac{2\pi}{N} \right)^2 \sum_{j=1}^N g'(x_j) + o\left(\dfrac{1}{N}\right).
\end{equation}
Moreover, for any function $h\in\mathcal{C}^1(I,\mathbb{R})$, we have
\begin{equation}\label{eq:approx_int_2}
\sum_{i=1}^N h(x_i) \int_{x_{i-1}}^{x_i} g(y)dy = \int_S h(x)g(x)dx - \sum_{i=1}^N h'(x_i) \int_{x_{i-1}}^{x_i} (y-x_i)g(y)dy+ o\left(\dfrac{1}{N}\right).
\end{equation}
\end{lem}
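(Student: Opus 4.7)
The plan is to reduce both identities to a single quadrature estimate: on each subinterval $B_{N,j}=(x_{j-1},x_j]$ of length $2\pi/N$, I Taylor-expand the relevant smooth function around the endpoint $x_j$, integrate against the local geometry, and sum over $j=1,\dots,N$. This is a standard Riemann-sum refinement; the two parts differ only in the regularity available on the function being expanded.

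For \eqref{eq:approx_int_1}, applying Taylor's formula with integral remainder to $g\in\mathcal{C}^2$ around $x_j$ gives $g(y)=g(x_j)+g'(x_j)(y-x_j)+O((y-x_j)^2)$ uniformly on $B_{N,j}$, with the $O$-constant controlled by $\|g''\|_\infty$. Using the elementary identity $\int_{x_{j-1}}^{x_j}(y-x_j)\,dy=-\tfrac12(2\pi/N)^2$, integration over $B_{N,j}$ yields
\begin{equation*}
\int_{x_{j-1}}^{x_j} g(y)\,dy \;=\; \tfrac{2\pi}{N}\,g(x_j) - \tfrac{1}{2}\bigl(\tfrac{2\pi}{N}\bigr)^{2} g'(x_j) + O(N^{-3}).
\end{equation*}
Summing the $N$ resulting identities gives \eqref{eq:approx_int_1} immediately, with a remainder of order $N\cdot O(N^{-3})=O(N^{-2})=o(1/N)$. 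Note that by periodicity, $\tfrac{2\pi}{N}\sum_j g'(x_j)\to \int_S g'(y)\,dy=0$, so the correction term $\tfrac12(2\pi/N)^{2}\sum_j g'(x_j)$ is itself $o(1/N)$, making the precise sign convention irrelevant for the stated $o(1/N)$ conclusion.

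For \eqref{eq:approx_int_2}, I would follow the same template, expanding $h\in\mathcal{C}^1$ around $x_i$ on each $B_{N,i}$ and then integrating against $g(x)\,dx$ rather than against $dx$. The obstacle is that $h$ is only assumed $\mathcal{C}^1$, so there is no pointwise Taylor remainder of order $O((x-x_i)^2)$. I would therefore write
\begin{equation*}
h(x) - h(x_i) - h'(x_i)(x-x_i) \;=\; \int_{x_i}^{x}\bigl(h'(t)-h'(x_i)\bigr)\,dt,
\end{equation*}
and control this remainder using the modulus of continuity $\omega_{h'}$ of $h'$ on the compact circle $S$: for $x\in B_{N,i}$, $|h'(t)-h'(x_i)|\leq \omega_{h'}(2\pi/N)$, so the left-hand side is bounded by $\omega_{h'}(2\pi/N)\cdot|x-x_i|$. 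Multiplying by $g(x)$, integrating over $B_{N,i}$ and summing over $i$ produces a total error bounded by $N\cdot\omega_{h'}(2\pi/N)\cdot\|g\|_\infty\cdot\tfrac{1}{2}(2\pi/N)^{2}=\tfrac{2\pi^{2}}{N}\|g\|_\infty\,\omega_{h'}(2\pi/N)$, which is $o(1/N)$ by the uniform continuity of $h'$ on $S$. This delivers exactly \eqref{eq:approx_int_2}.

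The main (and essentially only) difficulty lies in the second identity: had $h$ been of class $\mathcal{C}^2$, one could recycle the proof of \eqref{eq:approx_int_1} verbatim with an $O(1/N^2)$ remainder. With $h$ only $\mathcal{C}^1$, a naive second-order expansion would at best yield an $O(1/N)$ error, not the required $o(1/N)$; it is precisely the modulus-of-continuity estimate above that converts this into the desired strict-$o$ statement. Everything else is a routine quadrature computation on a uniform partition of the circle.
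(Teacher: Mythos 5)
Your proof of \eqref{eq:approx_int_1} follows essentially the paper's route: Taylor expand $g$ around $x_j$ on each cell, use $\int_{x_{j-1}}^{x_j}(y-x_j)\,dy=-\tfrac12(2\pi/N)^2$, and sum. Your side remark on the sign of the correction term is worth keeping: the rearranged sum naturally yields a \emph{plus} in front of $\tfrac12(2\pi/N)^2\sum_j g'(x_j)$, the lemma writes a minus, and since $g'$ is a continuous periodic function with $\int_S g'=0$ one has $\sum_j g'(x_j)=o(N)$, so the correction is $o(N^{-1})$ and the sign choice is immaterial to the statement as written (and to its applications in the paper, where that term is always discarded as $o(1/N)$).

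For \eqref{eq:approx_int_2} your argument genuinely departs from the paper's, and in a direction that actually matches the stated hypothesis better. The paper's proof expands $h$ to second order with an integral remainder $\int_{x_j}^{y} h''(t)(y-t)\,dt$, which implicitly requires $h\in\mathcal{C}^2$; this is in tension with the lemma's assumption $h\in\mathcal{C}^1$. Your version stops at the first-order expansion, controls the remainder $\int_{x_i}^{x}\bigl(h'(t)-h'(x_i)\bigr)\,dt$ by $\omega_{h'}(2\pi/N)\,|x-x_i|$, and invokes uniform continuity of $h'$ on the compact circle to obtain the $o(1/N)$ error after summation. This is the argument one needs under $\mathcal{C}^1$ regularity. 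In every application in the paper $h$ is $\cos(\cdot+\phi)$ or $\sin(\cdot+\phi)$, hence $\mathcal{C}^\infty$, so the paper's shortcut is harmless for its purposes; but your modulus-of-continuity estimate is the correct proof of the lemma as stated, and you rightly flag it as the only real subtlety.
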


\begin{proof}
Let $C_j=(x_{j-1},x_j)$ for  $1\leq j\leq N$. From Taylor's expansion,
$g(y)= g(x_j)+g'(x_j)(y-x_j)+\int_{x_j}^yg''(t)(y-t)dt$ hence the result \eqref{eq:approx_int_1} as $\int_S g(y)dy=\sum_{j=1}^N \int_{C_j} g(y)dy$. About  \eqref{eq:approx_int_2}, we proceed similarly as
$$ \int_S hg= \sum_j\int_{C_j}g(y) \left( h(x_j)+h'(x_i)(y-x_j)+\int_{x_j}^y h''(t)(y-t) dt\right)dy.$$
\end{proof}

\subsection{Auxilliary lemmas}\label{S:appendix_computation}
\subsubsection{About the derivatives of the isochron}

\begin{lem}\label{lem:computations_Dtheta2_chi} Let $\phi\in S$. There exists $C=C_{A,\gamma}$ such that
\begin{equation}
\label{eq:computations_Dtheta2_chi}
D^2\theta(u_{\phi})[\chi_j(s,z)]^2=  \mathbf{ 1}_{ z\leq \lambda_{ N, j}(s)} \left( \dfrac{ C}{ N^{ 2}} \cos \left(x_{ j}+ \phi\right) \sin(x_{ j}+ \phi) + O(N^{ -3})\right),
\end{equation}
where the notation $ O(N^{-3})$ is uniform in $(s,z,\phi)$.
\end{lem}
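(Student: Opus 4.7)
The proof is a direct but lengthy computation using the explicit second-derivative formula \eqref{eq:diff2_theta} from Proposition \ref{prop:ex_isochron}, specialized to $h=l=\chi_j(s,z)$. The plan is as follows.

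First, I would expand each of the five scalar ingredients appearing in \eqref{eq:diff2_theta} using the piecewise-constant structure
\[
\chi_j(s,z)(x) = \mathbf{1}_{z\leq \lambda_{N,j}(s)}\sum_{i=1}^N \mathbf{1}_{B_{N,i}}(x)\,\frac{2\pi\cos(x_i-x_j)}{N}.
\]
For any sufficiently smooth $G:S\to\mathbb{R}$, a Riemann-sum calculation (cf. Lemma~\ref{lem:approx_int}) gives
\[
\int_S \chi_j(s,z)(y)\,G(y)\,dy = \frac{\mathbf{1}_{z\leq \lambda_{N,j}(s)}}{N}\int_S 2\pi\cos(y-x_j)\,G(y)\,dy + O(N^{-2}),
\]
uniformly in $(s,z,\phi)$. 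Applying this with $G(y) = v_\phi(y)f'(u_\phi(y))$ and $G(y)=u_\phi(y)f'(u_\phi(y))$ yields $\alpha_\phi^\circ(\chi_j)$ and $\alpha_\phi^\gamma(\chi_j)$, respectively; applying it with $G(y)=v_\phi(y)^2 f''(u_\phi(y))$ and $G(y)=u_\phi(y)v_\phi(y)f''(u_\phi(y))$ yields $\beta_\phi(v_\phi,\chi_j)$ and $\beta_\phi(u_\phi,\chi_j)$. The remaining term $\beta_\phi(\chi_j,\chi_j)$ is directly of order $N^{-2}$, since $\chi_j^2 = \mathbf{1}_{z\leq \lambda_{N,j}(s)}\sum_i \mathbf{1}_{B_{N,i}}(2\pi\cos(x_i-x_j)/N)^2$, and a similar Riemann-sum estimate identifies its leading contribution at this order.

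Next, I would evaluate each of these integrals explicitly. After the substitution $w=y+\phi$ and the expansion $\cos(y-x_j)=\cos(w)\cos(x_j+\phi)+\sin(w)\sin(x_j+\phi)$, every integral reduces to a linear combination of $\cos(x_j+\phi)$ and $\sin(x_j+\phi)$ whose coefficients involve the moments of $\cos$, $\sin$, $\cos^2$, $\sin^2$, $\sin\cos$ against $f'(A\cos\cdot)$ or $f''(A\cos\cdot)$. The $f'$-moments follow from Lemma~\ref{lem:computation_A} together with $\mathcal{I}(1)-1=1+\gamma$ (see \eqref{eq:def_lambda_stable}); the $f''$-moments are reduced to $f'$-moments via a single integration by parts, using $\tfrac{d}{dw}f'(A\cos w) = -A\sin(w)f''(A\cos w)$, which gives in particular $\int_S \sin^2(w)\cos(w)\,f''(A\cos w)\,dw = \gamma/A$. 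The odd moments vanish by the symmetry $w\mapsto -w$.

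Finally, I would substitute these expansions back into \eqref{eq:diff2_theta} and collect terms at order $N^{-2}$. The main obstacle is purely algebraic: the three contributions proportional to $\sin(x_j+\phi)\cos(x_j+\phi)$ (coming from the cross products $\alpha_\phi^\circ(\chi_j)\beta_\phi(v_\phi,\chi_j)$ and $\alpha_\phi^\gamma(\chi_j)\beta_\phi(u_\phi,\chi_j)$, and from $\beta_\phi(\chi_j,\chi_j)$) must combine with the quadratic term $\alpha_\phi^\circ(\chi_j)^2+\alpha_\phi^\gamma(\chi_j)^2$ in such a way that only a single multiple of $\sin(x_j+\phi)\cos(x_j+\phi)$ survives at order $N^{-2}$, with the remainder absorbed into the $O(N^{-3})$ error (the uniformity in $(s,z,\phi)$ being inherited from the uniformity of the underlying Riemann-sum estimates). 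Carrying out this algebraic simplification identifies the explicit constant $C=C_{A,\gamma}$ and completes the proof.
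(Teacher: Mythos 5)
Your outline takes exactly the route the paper takes: specialize the second-derivative formula \eqref{eq:diff2_theta} to $h=l=\chi_j(s,z)$, evaluate $\alpha_\phi^\circ(\chi_j)$, $\alpha_\phi^\gamma(\chi_j)$, $\beta_\phi(v_\phi,\chi_j)$, $\beta_\phi(u_\phi,\chi_j)$ and $\beta_\phi(\chi_j,\chi_j)$ by expanding $\cos(x_i-x_j)$ trigonometrically, replacing Riemann sums by integrals (Lemma \ref{lem:approx_int}), and reducing the resulting $f'$- and $f''$-moments to Lemma \ref{lem:computation_A} and \eqref{eq:def_lambda_stable} via integration by parts. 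Up to the last step your proposal reproduces the paper's computations faithfully, including the key structural facts that $\alpha_\phi^\circ(\chi_j)\propto \sin(x_j+\phi)/N$, $\alpha_\phi^\gamma(\chi_j)\propto \cos(x_j+\phi)/N$, $\beta_\phi(v_\phi,\chi_j)\propto\cos(x_j+\phi)/N$, $\beta_\phi(u_\phi,\chi_j)\propto\sin(x_j+\phi)/N$, and that the even parts of $\beta_\phi(\chi_j,\chi_j)$ drop out by parity, leaving a $\cos(x_j+\phi)\sin(x_j+\phi)/N^2$ term.

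The step you defer to ``algebraic simplification'' is, however, precisely where the difficulty sits, and as you describe it the step does not go through. The three cross contributions are indeed multiples of $\cos(x_j+\phi)\sin(x_j+\phi)/N^2$, but the quadratic term $\alpha_\phi^\circ(\chi_j)^2+\alpha_\phi^\gamma(\chi_j)^2$ has leading order $\frac{4\pi^2}{N^2}\bigl(\sin^2(x_j+\phi)+(1+\gamma)\cos^2(x_j+\phi)\bigr)$ (using $\Vert u_\phi\Vert_\phi^2=A^2(1+\gamma)$): this is an even, nonnegative function of $x_j+\phi$ of order exactly $N^{-2}$, it is not a multiple of $\cos\sin$, and its prefactor $(2-\gamma)(1+\gamma)/(2(1-\gamma))$ does not vanish for $\gamma\in(-1,0)$. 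So the assertion that ``only a single multiple of $\sin\cos$ survives at order $N^{-2}$, with the remainder absorbed into the $O(N^{-3})$ error'' does not follow from the expansions you set up; you would need either to exhibit a cancellation of this even term or to weaken the statement to include an additional even contribution of order $N^{-2}$ and track it through the application in Lemma \ref{lem:I4}. In fairness, the paper's own proof makes the same jump --- it lists the same five estimates and concludes with ``putting all the previous estimates together'' without addressing the $\alpha^\circ$-squared and $\alpha^\gamma$-squared contributions --- so your proposal matches the paper's argument, but the final collection step is a genuine gap rather than routine algebra, and you should not present it as such.
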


\begin{proof}
Recall \eqref{eq:diff2_theta}, we have
\begin{multline}\label{eq:diff2_theta_aux1}
D^2\theta(u_{\phi})[\chi_j(s,z)]^2= \dfrac{1}{2A^2} \left(  2\alpha_\phi^\circ(\chi_j(s,z))\beta_\phi(v_\phi,\chi_j(s,z))+\beta_{ \phi}(\chi_j(s,z),\chi_j(s,z))\right) \\
+ \frac{ 1+ \gamma}{ A^{ 2}(1- \gamma)} \alpha_\phi^\gamma(\chi_j(s,z))\beta_\phi(u_\phi,\chi_j(s,z))  - \frac{ (2- \gamma)(1+ \gamma)}{ 2(1-\gamma)} \left( \alpha_\phi^\circ(\chi_j(s,z))^2+\alpha_\phi^\gamma(\chi_j(s,z))^2\right),
\end{multline}
Let us compute each term. About $\alpha$, using some trigonometric formula and Lemma \ref{lem:approx_int} we have
\begin{align*}
& ~ \alpha_{\phi}^\circ(\chi_j(s,z))\\
 &= \dfrac{\langle \chi_j, v_{\phi} \rangle_{\phi} }{A}=\dfrac{1}{A} \int_S \chi_j v_\phi f'(u_\phi)= \dfrac{2\pi}{A}  \mathbf{1}_{z\leq \lambda_{N,j}(s)} \sum_{i=1}^N  \dfrac{\cos(x_i-x_j)}{N}  \int_S v_\phi f'(u_\phi) \mathbf{1}_{B_{N,i}}\\
&= \dfrac{2\pi}{AN}  \mathbf{1}_{z\leq \lambda_{N,j}(s)} \left(   \cos(x_j+\phi) \sum_{i=1}^N  \cos(x_i+\phi) \int_S v_\phi f'(u_\phi) \mathbf{1}_{B_{N,i}} +  \sin(x_j+\phi)\sum_{i=1}^N  \sin(x_i+\phi)\int_S v_\phi f'(u_\phi) \mathbf{1}_{B_{N,i}}\right)\\
&= \dfrac{2\pi}{AN}  \mathbf{1}_{z\leq \lambda_{N,j}(s)} \left(   \cos(x_j+\phi) \int_S\cos(x+\phi)  v_\phi(x) f'(u_\phi(x))dx  +  \sin(x_j+\phi)\int_S \sin(x+\phi) v_\phi(x) f'(u_\phi(x))dx + O(N^{-1})\right)\\
&= \dfrac{2\pi}{AN}  \mathbf{1}_{z\leq \lambda_{N,j}(s)}   \sin(x_j+\phi)\int_S \sin(x+\phi) v_\phi(x) f'(u_\phi(x))dx + \mathbf{1}_{z\leq \lambda_{N,j}(s)}O(N^{-2})\\
&= \mathbf{1}_{z\leq \lambda_{N,j}(s)}  \left(- \dfrac{2\pi}{N}  \sin(x_j+\phi) \mathcal{I}(\sin^2) +O(N^{-2})\right)= \mathbf{1}_{z\leq \lambda_{N,j}(s)}  \left(- \dfrac{2\pi}{N} \sin(x_j+\phi) +O(N^{-2})\right),
\end{align*}
using Lemma \ref{lem:computation_A}. We prove in a same way that $ \alpha_{\phi}^\gamma( \chi_{ j}(s,z))= \mathbf{ 1}_{ z\leq \lambda_{ N, j}(s)}  \left(\dfrac{ 2 \pi}{ \Vert u_\phi\Vert_\phi N} \cos(x_{ j}+ \phi) (\mathcal{I}(1)-1)+ O(N^{ -2})\right)$. About $\beta$, we have similarly using Lemma \ref{lem:approx_int} that
\begin{align*}
&~ \beta_{\phi}(v_\phi,\chi_j(s,z))\\
& =  \int_Sf''(u_{\phi}(y))v_{\phi}(y)^2 \chi_j(s,z)(y)dy
= \sum_{i=1}^N \dfrac{w_{ij}^{(N)}}{N} \mathbf{1}_{z\leq \lambda_{N,j}(s)} \int_S f''(u_{\phi}(y))v_{\phi}(y)^2 \mathbf{1}_{B_{N,i}}(y)dy\\
&= \mathbf{1}_{z\leq \lambda_{N,j}(s)}  \dfrac{2\pi}{N} \left( \cos(x_j+\phi)\sum_{i=1}^N \cos(x_i+\phi)\int_S f''(u_{\phi}(y))v_{\phi}(y)^2 \mathbf{1}_{B_{N,i}}(y)dy \right.\\&\quad \left.+  \sin(x_j+\phi)\sum_{i=1}^N \sin(x_i+\phi) \int_S f''(u_{\phi}(y))v_{\phi}(y)^2 \mathbf{1}_{B_{N,i}}(y)dy \right)\\
& =\mathbf{1}_{z\leq \lambda_{N,j}(s)}  \dfrac{2\pi}{N} \left( \cos(x_j+\phi) \int_S \cos(y+\phi)  f''(u_{\phi}(y))v_{\phi}(y)^2dy \right.\\ &\quad \left.+ \sin(x_j+\phi) \int_S \sin(y+\phi)f''(u_{\phi}(y))v_{\phi}(y)^2dy+O\left( \dfrac{1}{N} \right) \right)\\
&= \mathbf{1}_{z\leq \lambda_{N,j}(s)}  \dfrac{2\pi}{N} \left( \cos(x_j+\phi) \int_S \cos(y+\phi)  f''(u_{\phi}(y))v_{\phi}(y)^2dy+O\left( \dfrac{1}{N} \right) \right).
\end{align*}
With Lemma \ref{lem:computation_A} and an integration by parts, we obtain
\begin{align*}
\int_S \cos(y+\phi)  f''(u_{\phi}(y))v_{\phi}(y)^2dy &= A^2 \int_S  \cos(y+\phi)  f''(A\cos(y+\phi))\sin^2(y+\phi)dy\\
&= \int_S \left( -A\sin(y) f''(A\cos(y)\right)\left( -A\sin(y)\cos(y)\right)dy\\
&=  - \int_S f'(A\cos(y))\left( -A + 2A\sin^2\right)dy\\
&= A\left( \mathcal{I}(1) - 2\mathcal{I}(\sin^2)\right)=A\gamma
\end{align*}
recalling  \eqref{eq:def_lambda_stable}, hence $\displaystyle\beta_{\phi}(v_\phi,\chi_j(s,z)) = \mathbf{1}_{z\leq \lambda_{N,j}(s)} \left(\dfrac{2\pi}{N} A\gamma  \cos(x_j+\phi) + O(N^{-2})\right)$.  We prove in a same way that $ \beta_{ \phi}(u_{ \phi}, \chi_{ j}(s,z))= - \mathbf{ 1}_{ z\leq \lambda_{ N, j}(s)} \left(\dfrac{ 2 \pi }{ N}A \gamma \sin(x_{ j}+ \phi) +O(N^{ -2})\right) $. Finally we have
\begin{align*}
&\beta_{\phi}(\chi_j(s,z),\chi_j(s,z))= \int_S f''(u_{\phi}(y))v_{\phi}(y) \left(\sum_{i=1}^N \mathbf{1}_{B_{N,i}}(y) \dfrac{w_{ij}^{(N)}}{N} \mathbf{1}_{z\leq \lambda_{N,j}(s)}\right)^2dy\\
&=  \mathbf{1}_{z\leq \lambda_{N,j}(s)} \left( \dfrac{2\pi}{N} \right)^2 \sum_{i=1}^N \left( \cos(x_i+\phi)\cos(x_j+\phi) + \sin(x_i+\phi)\sin(x_j+\phi)\right)^2 \int_{B_{N,i}}(y) f''(u_{\phi}(y))v_{\phi}(y) dy\\
&=\mathbf{1}_{z\leq \lambda_{N,j}(s)} \left( \dfrac{2\pi}{N} \right)^2 
\left( \cos(x_j+\phi)^2 \int_S \cos(y+\phi)^2 f''(u_{\phi}(y))v_{\phi}(y) dy \right. \\ &\quad \left.+ \sin(x_j+\phi)^2 \int_S \sin(y+\phi)^2 f''(u_{\phi}(y))v_{\phi}(y) dy \right. \\ &\quad \left. +2 \cos(x_j+\phi)\sin(x_j+\phi) \int_S \cos(y+\phi)\sin(y+\phi) f''(u_{\phi}(y))v_{\phi}(y) dy+ O(N^{-1})\right)\\
&=\mathbf{1}_{z\leq \lambda_{N,j}(s)} \left[\left( \dfrac{2\pi}{N} \right)^2 
2 \cos(x_j+\phi)\sin(x_j+\phi) \int_S \cos(y+\phi)\sin(y+\phi) f''(u_{\phi}(y))v_{\phi}(y) dy+O(N^{-3})\right].
\end{align*}
With an integration by parts and recognising \eqref{eq:def_lambda_stable},
\begin{align*}
\int_S \cos(y+\phi)\sin(y+\phi) f''(u_{\phi}(y))v_{\phi}(y) dy &=-A \int_S \cos(y)\sin(y) f''(A\cos(y))\sin(y) dy\\
&= \int_S \left( -A\sin(y) f''(A\cos(y)) \right) \left( \cos(y)\sin(y)\right) dy= - \gamma,
\end{align*}
we obtain that $\displaystyle \beta_{\phi}(\chi_j(s,z),\chi_j(s,z)) = \mathbf{1}_{z\leq \lambda_{N,j}(s)} \left( - 2 \gamma \left( \dfrac{2\pi}{N} \right)^2 
\cos(x_j+\phi)\sin(x_j+\phi) + O(N^{-3})\right)$.
Putting all the previous estimates together in  \eqref{eq:diff2_theta_aux1}, we obtain \eqref{eq:computations_Dtheta2_chi} for some constant $C=C_{ A, \gamma}$.
\end{proof}

\subsubsection{About the fluctuations}

\begin{lem}[Some computations for the proof of Proposition \ref{prop:theta_hat_decomp}]\label{lem:computations_Ai} Let $\phi\in S$. Recall the definitions of $u_\phi$ and $v_\phi$ in \eqref{eq:def_U_circle} and \eqref{eq:def_vphi}. We have
\begin{align}
A_1 &:= \dfrac{2\pi}{N}\sum_{j=1}^N \cos(x_j+\phi)f(u_{\phi}(x_j))=A+  o\left(\frac{1}{N}\right) \label{eq:A1_eq}\\
A_2 &:= \sum_{i=1}^N \cos(x_i+\phi)  \langle v_{\phi}, \mathbf{1}_{B_{N,i}}\rangle_{\phi} =  \dfrac{A\pi}{N}+ o\left(\frac{1}{N}\right) \label{eq:A2_eq}\\
A_3 & := \dfrac{2\pi}{N}\sum_{j=1}^N \sin(x_j+\phi)f(u_{\phi}(x_j)) =  o\left(\frac{1}{N}\right) \label{eq:A3_eq}\\
A_4 &:= \sum_{i=1}^N \sin(x_i+\phi)  \langle v_{\phi}, \mathbf{1}_{B_{N,i}}\rangle_{\phi} = -A +  o\left(\frac{1}{N}\right), \label{eq:A4_eq}
\end{align}
where the notation $ o\left(\frac{1}{N}\right)$ is uniform in the choice of $\phi$.
\end{lem}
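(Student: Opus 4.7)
All four quantities are Riemann-sum type expressions involving smooth $ 2\pi$-periodic integrands, and each can be controlled by Lemma \ref{lem:approx_int} together with the fixed-point relation \eqref{eq:A_self} and the identities of Lemma \ref{lem:computation_A}. The uniformity in $ \phi\in S$ is automatic because every implicit constant arises only from sup-norms of trigonometric functions and of $f, f', f''$ on a compact set, none of which depend on $ \phi$. Throughout, after each change of variable, I will use the invariance of integration on $S$ under the translation $ y\mapsto y+\phi$, so that integrals involving $u_{ \phi}=A\cos( \cdot + \phi)$ reduce to integrals involving $A\cos(y)$.

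The cases $A_{ 1}$ and $A_{ 3}$ are the simpler ones. Apply Lemma \ref{lem:approx_int}(1) to $ g(y)= \cos(y+ \phi) f(A \cos(y+ \phi))$ for $A_{ 1}$, and to $ g(y)= \sin(y+ \phi) f(A \cos(y+ \phi))$ for $A_{ 3}$. The first-order correction $ \tfrac{1}{ 2}( 2 \pi/N)^{ 2} \sum g^{ \prime}(x_{ j})$ is negligible at order $1/N$ because $g'$ is itself a smooth periodic function of mean zero, so its Riemann sum is $o(1)$. The leading integral then evaluates by \eqref{eq:A_self} for $A_{ 1}$ (giving exactly $A$ after the translation) and by oddness of $ y\mapsto \sin(y)f(A \cos y)$ for $A_{ 3}$ (giving $0$).

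The cases $A_{ 2}$ and $A_{ 4}$ are more delicate because they require the $1/N$ correction in Lemma \ref{lem:approx_int}(2). The key observation is that $v_{ \phi}(y)f^{ \prime}(u_{ \phi}(y))= \tfrac{ {\rm d}}{ {\rm d}y}[f(u_{ \phi}(y))]$, so each cell integral $ \left\langle v_{ \phi}\, ,\, \mathbf{1}_{ B_{ N,i}}\right\rangle_{ \phi}$ is of order $1/N$ and smooth in its endpoints. Apply Lemma \ref{lem:approx_int}(2) with $g(y)=v_{ \phi}(y)f^{ \prime}(u_{ \phi}(y))$ and with $h(x)= \cos(x+ \phi)$ for $A_{ 2}$, respectively $h(x)= \sin(x+ \phi)$ for $A_{ 4}$. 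For $A_{ 4}$ the leading integral reduces, via translation and Lemma \ref{lem:computation_A}, to $-A \mathcal{ I}(\sin^{ 2})=-A$, while the correction $- \sum h^{ \prime}(x_{ i}) \int_{B_{N,i}}(y-x_{ i})g(y){\rm d}y$ becomes, after replacing $g(y)$ by $g(x_{ i})$ on the cell and using $ \int_{B_{N,i}}(y-x_{ i}){\rm d}y=- 2 \pi^{ 2}/N^{ 2}$, a Riemann sum proportional to $ \mathcal{ I}(\cos \sin)=0$; this yields $A_{ 4}=-A+o(1/N)$.

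The main obstacle is $A_{ 2}$: here the leading integral gives $-A \mathcal{ I}(\cos \sin)=0$ so that the \emph{entire} contribution $ \pi A/N$ comes from the first-order correction in Lemma \ref{lem:approx_int}(2). Carrying out the same substitution as above, this correction becomes
\[
\frac{ 2 \pi^{ 2}}{ N^{ 2}} \sum_{ i=1}^{ N} \sin(x_{ i}+ \phi) \cdot A \sin(x_{ i}+ \phi)f^{ \prime}(u_{ \phi}(x_{ i}))+o(1/N)= \frac{ \pi A}{ N} \cdot \frac{ 2 \pi}{ N} \sum_{ i=1}^{ N} \sin^{ 2}(x_{ i}+ \phi)f^{ \prime}(u_{ \phi}(x_{ i}))+o(1/N),
\]
and by the standard Riemann-sum approximation for the smooth periodic integrand $ \sin^{ 2}( \cdot + \phi)f^{ \prime}(u_{ \phi})$, the last inner sum converges to $ \mathcal{ I}(\sin^{ 2})=1$ with error $o(1)$. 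Multiplying, the correction equals $ \pi A/N+o(1/N)$, which is precisely the claimed value of $A_{ 2}$. The remaining task is routine bookkeeping of the error terms, all of which are $O(1/N^{ 2})$ uniformly in $ \phi$.
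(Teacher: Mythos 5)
Your proposal is correct and takes essentially the same route as the paper: apply Lemma \ref{lem:approx_int} (part (1) for $A_1,A_3$, part (2) for $A_2,A_4$), translate by $\phi$ to reduce to the fixed-point identity \eqref{eq:A_self} and the integrals $\mathcal{I}(\sin^2)=1$, $\mathcal{I}(\cos\sin)=0$ of Lemma \ref{lem:computation_A}, and then control the first-order corrections by a further Riemann-sum approximation. The only (harmless) stylistic difference is that for $A_1$ and $A_3$ you dispatch the correction term by noting $g'$ has zero mean because $g$ is $2\pi$-periodic, whereas the paper computes that mean explicitly and observes the two contributions cancel; and for $A_2$ you correctly identify that the entire contribution $\pi A/N$ comes from the first-order correction times $\mathcal{I}(\sin^2)=1$, exactly as in the paper.
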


\begin{proof}
From Lemma \ref{lem:approx_int}, more especially \eqref{eq:approx_int_1} applied to $g(y)=\cos(y+\phi)f(u_{\phi}(y))$, we have that
\begin{align*}
A_1  &= \int_S \cos(x+\phi)f(u_{\phi}(x))dx + \dfrac{2\pi^2}{N^2} \sum_{j=1}^N \left( \sin(x_j+\phi)f(u_{\phi}(x_j))-\cos(x_j+\phi)f'(u_{\phi}(x_j))v_{\phi}(x_j)\right) + o\left(\frac{1}{N}\right)\\
&= A +\dfrac{2\pi^2}{N^2} \sum_{j=1}^N \left( \sin(x_j+\phi)f(u_{\phi}(x_j))-\cos(x_j+\phi)f'(u_{\phi}(x_j))v_{\phi}(x_j)\right) + o\left(\frac{1}{N}\right) = A+  o\left(\frac{1}{N}\right),
\end{align*}
using \eqref{eq:A_kappa&Isin} and as
\begin{multline*}
\dfrac{2\pi}{N} \sum_{j=1}^N \left( \sin(x_j+\phi)f(u_{\phi}(x_j))-\cos(x_j+\phi)f'(u_{\phi}(x_j))v_{\phi}(x_j)\right) \\= \int_S \sin (y+\phi)f(A\cos(y+\phi)dx +A \int_S \cos(y+\phi) f'(A\cos(y+\phi)) \sin(y+\phi) +  O\left(\frac{1}{N}\right) = O\left(\frac{1}{N}\right).
\end{multline*}
Similarly we can prove \eqref{eq:A3_eq} as
\begin{align*}
A_3 &= \int_S \sin(x+\phi)f(u_{\phi}(x))dx - \dfrac{2\pi^2}{N^2} \sum_{j=1}^N \left( \cos(x_j+\phi)f(u_{\phi}(x_j))+\sin(x_j+\phi)f'(u_{\phi}(x_j))v_{\phi}(x_j)\right) + o\left(\frac{1}{N}\right)\\
&=  - \dfrac{2\pi^2}{N^2} \sum_{j=1}^N \left( \cos(x_j+\phi)f(u_{\phi}(x_j))+\sin(x_j+\phi)f'(u_{\phi}(x_j))v_{\phi}(x_j)\right) + o\left(\frac{1}{N}\right)=  o\left(\frac{1}{N}\right),
\end{align*}
using that $\int_S \sin(x+\phi)f(u_{\phi}(x))dx=0$ by symmetry and
\begin{multline*}
\dfrac{2\pi}{N} \sum_{j=1}^N \left( \cos(x_j+\phi)f(u_{\phi}(x_j))+\sin(x_j+\phi)f'(u_{\phi}(x_j))v_{\phi}(x_j)\right)\\= \int_S \left( \cos f(A\cos) - \sin f'(A\cos) A\sin \right) +  O\left(\frac{1}{N}\right)=A-A+O\left(\frac{1}{N}\right)=O\left(\frac{1}{N}\right).
\end{multline*}

From Lemma \ref{lem:approx_int}, more especially \eqref{eq:approx_int_2} applied to $g(y)=v_{\phi}(y)f'(u_{\phi}(y))$ and $h(x)=\cos(x+\phi)$, we have that
\begin{align*}
A_2 &= \sum_{i=1}^N \cos(x_i+\phi)  \langle v_{\phi}, \mathbf{1}_{B_{N,i}}\rangle_{\phi}= \sum_{i=1}^N \cos(x_i+\phi)  \int_{B_{N,i}}  v_{\phi}(y) f'(u_{\phi}(y)) dy\\
&= \int_S \cos(x+\phi)v_{\phi}(x) f'(u_{\phi}(x))dx + \sum_{i=1}^N \sin(x_i+\phi) \int_{B_{N,i}}(y-x_i)v_{\phi}(y)f'(u_{\phi}(y))dy+o\left(\frac{1}{N}\right)\\
&=\sum_{i=1}^N \sin(x_i+\phi) \int_{B_{N,i}}(y-x_i)v_{\phi}(y)f'(u_{\phi}(y))dy+o\left(\frac{1}{N}\right)\\
&= \sum_{i=1}^N \sin(x_i+\phi)v_{\phi}(x_i)f'(u_{\phi}(x_i)) \int_{B_{N,i}}(y-x_i)dy+o\left(\frac{1}{N}\right)\\
&=- \sum_{i=1}^N \sin(x_i+\phi) v_{\phi}(x_i)f'(u_{\phi}(x_i)) \frac{1}{2}\left(\dfrac{2\pi}{N}\right)^2+o\left(\frac{1}{N}\right)\\
&=\dfrac{\pi}{N} \left(A \int \sin(x+\phi)^2 f'(A\cos(x+\phi))dx + O\left(\frac{1}{N}\right) \right)+o\left(\frac{1}{N}\right)= \dfrac{A\pi}{N}+ o\left(\frac{1}{N}\right)
\end{align*}
and similarly, for the choice $h(x)=\sin(x+\phi)$ and using \eqref{eq:A_kappa&Isin}
\begin{align*}
A_4 &= \sum_{i=1}^N \sin(x_i+\phi)  \langle v_{\phi}, \mathbf{1}_{B_{N,i}}\rangle_{\phi}= \sum_{i=1}^N \sin(x_i+\phi)  \int_{B_{N,i}}  v_{\phi}(y) f'(u_{\phi}(y)) dy\\
&= \int_S \sin(x+\phi)v_{\phi}(x) f'(u_{\phi}(x))dx - \sum_{i=1}^N \cos(x_i+\phi) \int_{B_{N,i}}(y-x_i)v_{\phi}(y)f'(u_{\phi}(y))dy+o\left(\frac{1}{N}\right)\\
&=-A  - \sum_{i=1}^N \cos(x_i+\phi) \int_{B_{N,i}}(y-x_i)v_{\phi}(y)f'(u_{\phi}(y))dy+o\left(\frac{1}{N}\right)\\
&=-A  +A \sum_{i=1}^N \cos(x_i+\phi) \int_{B_{N,i}}(y-x_i)\sin(y+\phi)f'(u_{\phi}(y))dy+o\left(\frac{1}{N}\right).
\end{align*}
As
\begin{align*}
&\sum_{i=1}^N \cos(x_i+\phi) \int_{B_{N,i}}(y-x_i)\sin(y+\phi)f'(u_{\phi}(y))dy\\
&=\sum_{i=1}^N \cos(x_i+\phi) \sin(x_i+\phi)f'(u_{\phi}(x_i))\int_{B_{N,i}}(y-x_i)dy+O\left(\frac{1}{N^2}\right)\\
&= -\dfrac{\pi}{N} \dfrac{2\pi}{N}\sum_{i=1}^N \cos(x_i+\phi) \sin(x_i+\phi)f'(u_{\phi}(x_i))+o\left(\frac{1}{N}\right)\\
&= -\dfrac{\pi}{N} \int_S\cos(x+\phi) \sin(x+\phi)f'(u_{\phi}(x))dx +O\left(\frac{1}{N^2}\right)+o\left(\frac{1}{N}\right)= o\left(\frac{1}{N}\right),
\end{align*}
we obtain \eqref{eq:A4_eq}.
\end{proof}

\bibliographystyle{abbrv}

\end{document}